\title{Measurable Differentiable Structures on Doubling Metric Spaces}
\date{14 August 2012 (last modified)}
\subjclass[2010]{53C23 (28A15, 30L05, 46E35, 58C20)}
\author{Jasun Gong}
\address{Jasun Gong \hfill\break\indent
Institute of Mathematics \hfill\break\indent
Aalto University \hfill\break\indent 
P.O. Box 11100  \hfill\break\indent  
FI-00076 Aalto
\hfill\break\indent Finland
}
\email{jasun.gong@aalto.fi}
\theoremstyle{plain}
\newtheorem{thm}{Theorem}[section]
\newtheorem{cor}[thm]{Corollary}
\newtheorem{lemma}[thm]{Lemma}
\newtheorem{ques}[thm]{Question}
\theoremstyle{definition}
\newtheorem{defn}[thm]{Definition}
\newtheorem{rmk}[thm]{Remark}
\theoremstyle{remark}
\numberwithin{equation}{section}
\renewcommand{\a}{\alpha}
\renewcommand{\b}{\beta}
\renewcommand{\d}{\delta}
\newcommand{\diam}{\operatorname{diam}}
\newcommand{\e}{\epsilon}
\newcommand{\f}{\mathfrak{f}}
\newcommand{\g}{\mathfrak{g}}
\newcommand{\h}{\mathfrak{h}}
\renewcommand{\H}{\mathcal{H}}
\newcommand{\Lip}{\operatorname{Lip}}
\newcommand{\N}{\mathbb{N}}
\newcommand{\p}{\mathfrak{p}}
\newcommand{\R}{\mathbb{R}}
\newcommand{\U}{\Upsilon}
\newcommand{\wsto}{\stackrel{*}{\rightharpoonup}}
\begin{document}
\maketitle

\begin{abstract}
On metric spaces equipped with doubling measures, we prove that a differentiability theorem holds for Lipschitz functions if and only if the space supports nontrivial (metric) derivations in the sense of Weaver \cite{WeaverED} that satisfy an additional infinitesmal condition.  In particular it extends the case of spaces supporting Poincar\'e inequalities, as first proven by Cheeger \cite{Cheeger}, as well as the case of spaces satisfying the Lip-lip condition of Keith \cite{Keith}.

The proof relies on generalised ``change of variable'' arguments that are made possible by the linear algebraic structure of derivations.  As a crucial step in the argument, we also prove new rank bounds for derivations with respect to doubling measures.

\end{abstract}

\section{Introduction}

In 1919 Rademacher %\cite{Rademacher} 
proved that Lipschitz functions on $\R^n$ are a.e.\ differentiable with respect to the Lebesgue measure.  Since then, many mathematicians have pursued similar differentiability results in increasingly general settings.  The main result of this note follows this same direction but in the context of metric spaces equipped with Borel regular measures, or {\em metric measure spaces}.

Before proceeding to the theorem itself, it is worth recalling the geometric considerations that led to this general framework.  Pansu \cite{Pansu} was motivated by the Mostow rigidity phenomenon for negatively-curved manifolds and their ideal boundaries.  To this end, he showed that a Rademacher-type theorem holds true for Carnot groups \cite{Gromov:CC}, \cite{Bellaiche}, i.e.\ certain nilpotent Lie groups with similar metric structures as these ideal boundaries.
Heinonen and Koskela  \cite{Heinonen:Koskela} further identified a general class of metric measure spaces and developed on them a rich theory of quasi-conformal mappings, a key tool in the geometry of hyperbolic manifolds.  These spaces are determined by two properties: (1) the doubling condition for measures, and (2) a generalized Poincar\'e inequality in terms of upper gradients.

Cheeger \cite{Cheeger} proved a deep generalization of the Rademacher theorem for the class of metric spaces supporting these two hypotheses.  Though differentiability is a phenomenon enjoyed by Euclidean spaces, the Cheeger and Pansu theorems imply that the geometry of many exotic metric spaces, including Carnot groups and Laakso spaces \cite{Laakso}, is far from Euclidean.  Specifically, such spaces do not allow isometric (or even bi-Lipschitz) embeddings into any $\R^n$, for any $n \in \N$.  More recently, Cheeger and Kleiner \cite{Cheeger:Kleiner}, \cite{Cheeger:Kleiner:gafa}, \cite{Cheeger:Kleiner:annals} have extended these non-embeddability theorems to the case of Lipschitz maps taking values in Banach spaces that satisfy the Radon-Nikod\'ym property.

\subsection{Differentiability on Metric Spaces} \label{sect_intro}

We begin with the spaces of interest.  The discussion below follows the formulation by Keith \cite{Keith}, who gave a further generalization of Cheeger's theorem.

\begin{defn} \label{defn_doubling}
Let $(X,d)$ be a metric space.  A Borel measure $\mu$ on $X$ is called ($\kappa$-){\em doubling} if there exists a constant $\kappa \geq 1$ so that 
$$
0 \;<\; \mu(B(x,2r)) \;\leq\; \kappa\, \mu(B(x,r)) \;<\; \infty
$$
holds for all $x \in X$ and $r > 0$.  We call $Q := \log_2(\kappa)$ the {\em doubling exponent} of $X$.
\end{defn}

%Roughly speaking, the existence of a doubling measure requires that the underlying space be finite-dimensional.  In particular, the Hausdorff dimension of such spaces satisfies $\dim_\H(X) \leq Q$.
As examples, Lebesgue measure on $\R^n$ is doubling; so is the volume element of a compact Riemannian manifold.  In contrast, there also exist doubling measures on $\R^n$ that are singular to Lebesgue measure; for examples, see \cite{Kaufman:Wu} and \cite{Wu}.

To obtain a reasonable theory of calculus, we will need analogues for the gradient of a function.  Following \cite{Semmes} and \cite{Cheeger}, it suffices to work with generalizations for the norm of the gradient.

\begin{defn} \label{defn_ptlip}
On a metric space $(X,d)$, the (upper) %and lower 
pointwise Lipschitz constants of a function $f \in \Lip(X)$ %is defined as 
are defined, respectively, as
\begin{eqnarray*}
{\rm lip}[f](x) &:=&
\liminf_{r \to 0} \Big(
\sup_{y \in \bar{B}(x,r)} \frac{|f(y) - f(x)|}{r}
\Big), \\
\Lip[f](x) %\;:=\; 
&:=&
\limsup_{r \to 0} \Big(
\sup_{y \in \bar{B}(x,r)} \frac{|f(y) - f(x)|}{r} 
\Big) \,=\, 
\limsup_{y \to x} \frac{|f(y) - f(x)|}{d(x,y)}.
\end{eqnarray*}
\end{defn}

Pointwise Lipschitz constants are special cases of (weak) upper gradients, for which a robust theory of Sobolev spaces has been developed.  For more details, see \cite{Heinonen:Koskela}, \cite{Shan}, \cite{Hajlasz2}, and \cite{HeinonenLA}.

We now extend the notion of differentiable structure from manifolds to metric measure spaces.  Roughly speaking, it ensures that the Rademacher theorem holds for such spaces.

\begin{defn} \label{defn_measdiff}
Let $(X,d,\mu)$ be a metric measure space.

(1) %\label{defn_chart}
A measurable subset $Y \subset X$ is a {\em chart (of differentiability) on $X$}, if $\mu(Y) > 0$ and there exist $n \in \N$ and a Lipschitz map $\xi: X \to \R^n$ with the following property:\
for every $f \in \Lip(X)$ there is a unique $Df \in L^\infty(Y;\R^n)$ so that
for $\mu$-a.e.\ $x \in Y$,
\begin{equation} \label{eq_diffprop}
\left.\begin{split}
\hspace{.1in}
0 &\;=\;
\Lip\big[ f - Df(x) \cdot \xi \big](x) \\ &\;=\;
\lim_{r \to 0} 
\sup \left\{
\frac{\big|f(y) - f(x) - Df(x)\cdot[\xi(y)-\xi(x)]|}{r} \,:\,
y \in \bar{B}(x,r)
\right\}.
\hspace{.1in}
\end{split}\right\}
\end{equation}
%holds, for $\mu$-a.e.\ $x \in Y$.
Suggestively, we call $\xi$ a set of {\em coordinates} on $Y$, $Df(x)$ the {\em (measurable) differential of $f$ at $x$ (with respect to $\xi$)}, and $n$ the {\em chart dimension} of $Y$.

\vspace{.05in}
(2) A space $(X,d,\mu)$ supports a {\em (strong) measurable differentiable structure}, if 
there exist $\mu$-measurable subsets $\{X_m\}_{m=1}^\infty$ of $X$, called an {\em atlas} of $X$, so that
\begin{itemize}
\item the set $X \setminus \bigcup_{m=1}^\infty X_m$ has zero $\mu$-measure;
\item each $X_m$ is a chart of differentiability on $X$;
\item there exists $N \in \N$ so that the dimension $n(m)$ of every $X_m$ satisfies $$
0 \leq n(m) \leq N.
$$
\end{itemize}
Such a structure is called {\em non-degenerate} if $n(m) \geq 1$ holds for some $m \in \N$.
\end{defn}

%As a related notion, we consider a quantitative version of path-connectedness.

%\begin{defn} \label{defn_quasiconvex}
%A metric space $(X,d)$ is {\em ($\Lambda$-)quasiconvex} if there exists $\Lambda \geq 1$ so that for all $x,y \in X$, there is a curve $\gamma : [0,1] \to X$ that joins $x$ to $y$ and so that
%$$
%{\rm length}(\gamma) \;\leq\; \Lambda \, d(x,y).
%$$
%\end{defn}

The Cheeger and Keith differentiability theorems are stated below.  Though Poincar\'e inequalities will not be discussed here, we remind the reader that on metric spaces equipped with doubling measures, the validity of a Poincar\'e inequality (in terms of upper gradients) implies the Lip-lip condition \cite[Prop 4.3.1]{Keith}.

\begin{thm}[Cheeger, 1999] \label{thm_cheeger}
Let $(X,d)$ be a metric space and let $\mu$ be a $\kappa$-doubling measure on $X$.  If $(X,d,\mu)$ supports a $p$-Poincar\'e inequality for some $p \geq 1$, then it admits a non-degenerate measurable differentiable structure.  %Moreover, $(X,d)$ is quasiconvex.
\end{thm}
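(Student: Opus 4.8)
The plan is to first trade the Poincar\'e inequality for the weaker, purely infinitesimal statement recalled in the remark above: on a space with a $\kappa$-doubling measure, a $p$-Poincar\'e inequality forces the \emph{Lip-lip condition}, namely that there is $\Lambda \geq 1$ with $\Lip[f](x) \leq \Lambda\,{\rm lip}[f](x)$ for every $f \in \Lip(X)$ and $\mu$-a.e.\ $x \in X$ \cite[Proposition~4.3.1]{Keith}. In particular ${\rm lip}[f]$ and $\Lip[f]$ are comparable $\mu$-a.e.; of the Poincar\'e hypothesis I will use only this comparability, the unconditional fact that $\Lip[f]$ is an upper gradient of $f$, and — for non-degeneracy at the end — the inequality itself. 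It therefore suffices to manufacture a non-degenerate measurable differentiable structure under the Lip-lip condition, and the remaining steps do exactly this.

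\textbf{Charts from maximal independent tuples.} Next I would fix a countable family $\mathcal D \subset \Lip(X)$ that is dense in the sense that each $f \in \Lip(X)$ is a locally uniform limit of members of $\mathcal D$ with uniformly bounded Lipschitz constants. For a $k$-tuple $\mathbf g = (g_1,\dots,g_k)$ from $\mathcal D$, a point $x$, and $\d > 0$, call $\mathbf g$ \emph{$(\d,x)$-independent} if $\Lip\bigl[\sum_i a_i g_i\bigr](x) \geq \d\,|a|$ for every $a \in \R^k$. For fixed $x$ the functional $f \mapsto \Lip[f](x)$ is a seminorm on $\Lip(X)$, and a compactness argument on the unit sphere of $\R^k$ shows that a $k$-tuple is linearly independent modulo the kernel of this seminorm if and only if it is $(\d,x)$-independent for some $\d>0$. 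The rank bound below guarantees that for $\mu$-a.e.\ $x$ this kernel has codimension at most a constant $N = N(\kappa,p,\Lambda)$. Choosing at $\mu$-a.e.\ $x$ a tuple of maximal length over a fixed sequence $\d_j \downarrow 0$, and partitioning $X$ (up to a $\mu$-null set) according to which tuple is chosen, yields a countable measurable cover $\{X_m\}$ on each piece of which a single $\xi^{(m)} \in \mathcal D^{\,n(m)}$ is a maximal $(\d_{j(m)},\cdot)$-independent system. Maximality forces every $f \in \mathcal D$, hence by density every $f \in \Lip(X)$, to lie infinitesimally in the span of $\xi^{(m)}$: there is a measurable $Df\colon X_m \to \R^{n(m)}$ with $\Lip\bigl[f - Df(x)\cdot\xi^{(m)}\bigr](x) = 0$ for $\mu$-a.e.\ $x \in X_m$, while $(\d_{j(m)},\cdot)$-independence pins $Df(x)$ down uniquely and bounds it in $L^\infty$. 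Measurability of the selection $x \mapsto Df(x)$ and the passage from $\mathcal D$ to all of $\Lip(X)$ — the latter using the Lip-lip comparability under approximation by Lipschitz functions with bounded constants — are routine, and I would treat them as bookkeeping. Discarding $\mu$-null pieces, $\{X_m\}$ is then an atlas of charts in the sense of Definition~\ref{defn_measdiff}.

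\textbf{The rank bound, the expected obstacle.} The heart of the matter is the uniform bound: there is $N = N(\kappa,p,\Lambda)$ such that no $(\d,x)$-independent tuple of length $>N$ can persist on a set of positive $\mu$-measure, \emph{for any} $\d>0$; the uniformity over small $\d$ is what makes this delicate. I would argue by blow-up: at a $\mu$-density point $x_0$ of such a set, rescale the metric by $r_i^{-1}$ along some $r_i \downarrow 0$; the normalized coordinate functions subconverge, by Arzel\`a--Ascoli and the local compactness supplied by doubling, to ``generalized linear'' functions on a tangent cone; doubling and the Poincar\'e inequality pass to the limit, and the quantitative independence survives. One must then bound the number of independent generalized linear functions on a doubling Poincar\'e tangent space by a constant determined by the doubling exponent $Q=\log_2\kappa$ and the Poincar\'e data, which yields $N$. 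Making this limit precise, and above all extracting a \emph{dimension} from the scalar exponent $Q$, is the genuinely hard point; it is exactly here that the present paper intervenes, replacing the blow-up by the new rank bounds for derivations (in the sense of Weaver) with respect to doubling measures promised in the abstract, and feeding those back to produce $N$.

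\textbf{Non-degeneracy.} Finally, if every chart had dimension $0$, then $\Lip[f]=0$ $\mu$-a.e.\ for every $f \in \Lip(X)$. Since $\Lip[f]$ is an upper gradient of $f$, the $p$-Poincar\'e inequality would then force $f$ to have $\mu$-a.e.\ zero oscillation on every ball, hence to be $\mu$-a.e.\ constant on each ball, and so — by continuity and the connectedness that a doubling Poincar\'e space enjoys — constant; this contradicts the existence of the nonconstant $1$-Lipschitz function $d(\cdot,x_0)$ (for any $x_0$, since $X$ has more than one point). Hence $n(m)\geq 1$ for some $m$, and the structure is non-degenerate, which completes the plan.
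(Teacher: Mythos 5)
First, a point of orientation: the paper does not prove Theorem \ref{thm_cheeger} at all; it is quoted as background with a citation to \cite{Cheeger}, so your proposal can only be measured against Cheeger's and Keith's arguments and against the machinery developed here. Your outline --- trade the Poincar\'e inequality for the Lip-lip condition via \cite[Prop.\ 4.3.1]{Keith}, then build charts from maximal $(\d,x)$-independent tuples drawn from a countable dense family --- is essentially Keith's strategy for Theorem \ref{thm_keith}, and the chart construction and the non-degeneracy argument are sound in outline. But the proposal has a genuine gap exactly where you flag one: the uniform bound $N$ on the length of $(\d,x)$-independent tuples, and neither of the two patches you offer closes it. The blow-up to tangent cones followed by ``bounding the number of independent generalized linear functions on a doubling Poincar\'e space'' is not a reduction of the hard step but a restatement of it: that bound is the core of Cheeger's argument and does not follow from the doubling exponent alone. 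Importing the paper's rank bound for derivations (Lemmas \ref{lemma_doublingbd} and \ref{lemma_doublingbd2}) is circular in this context: Lemma \ref{lemma_doublingbd2} bounds the number of $L^\infty$-linearly independent elements of $\U(X,\mu)$, and to convert ``many functions whose $\Lip$-seminorms at $x$ are independent'' into ``many independent derivations'' you need derivations that detect $\Lip[\,\cdot\,](x)$ from below, i.e.\ the Lip-derivation inequality \eqref{eq_lipderiv}. In this paper that inequality is either a hypothesis (direction $(2)\Rightarrow(1)$ of Theorem \ref{thm_doublingdiff}) or a consequence of an already-existing differentiable structure (direction $(1)\Rightarrow(2)$, via Lemma \ref{lemma_lipdiff}); for Poincar\'e spaces its validity is the Cheeger--Weaver theorem \cite[Thm 43]{WeaverED}, which is itself deduced from Theorem \ref{thm_cheeger}. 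So the dimension bound, the only genuinely hard ingredient, is left unproved.

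A smaller but not cosmetic issue: the passage from the countable family $\mathcal D$ to all of $\Lip(X)$ is not ``routine bookkeeping''. Locally uniform convergence $f_i \to f$ with bounded Lipschitz constants gives no pointwise control on $\Lip[f-f_i](x)$, and this is precisely where Keith must work with ${\rm lip}$ along selected scales and then invoke the Lip-lip inequality to upgrade to $\Lip$; the analogous difficulty in this paper is handled in Step 3 of the proof of Theorem \ref{thm_dimmatch} through \eqref{eq_lipderiv} together with a Mazur-lemma argument in a reflexive $L^q$ space to get $\mu$-a.e.\ convergence of the candidate differentials. You should also record explicitly that the uniqueness and $L^\infty$-boundedness of $Df$ on a chart come from the uniform lower bound $\Lip[a\cdot\xi^{(m)}](x)\ge\d_{j(m)}|a|$ on that chart; your construction supplies this, but it is the property that makes the differential well defined rather than a side remark.
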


\begin{thm}[Keith, 2004] \label{thm_keith}
Let $(X,d)$ be a metric space and let $\mu$ be a doubling measure on $X$.  If $(X,d,\mu)$ satisfies, for some $K \geq 1$, the Lip-lip condition
\begin{equation} \label{eq_Liplip}
\Lip[f](x) \;\leq\; K \, {\rm lip}[f](x)
\end{equation}
for all Lipschitz functions $f: X \to \R$ and for $\mu$-a.e.\ $x \in X$, then it admits a measurable differentiable structure.  %If in addition $(X,d)$ is quasiconvex, then the structure is non-degenerate.
\end{thm}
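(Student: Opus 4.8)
\emph{Proof plan.} The plan is to reconstruct Keith's argument \cite{Keith}, organised around a single scalar functional measuring infinitesimal linear dependence, with the Lip-lip inequality \eqref{eq_Liplip} playing the role that Poincar\'e inequalities play in Cheeger's proof. Since $(X,d)$ is separable, fix a countable set $\mathcal{D} \subset \Lip(X)$ which is dense in the sense that every $f \in \Lip(X)$ is a locally uniform limit of some sequence $(g_j) \subset \mathcal{D}$ with $\sup_j \Lip(g_j) < \infty$ (e.g.\ rational combinations of truncated distance functions to a countable dense set). For a finite tuple $\xi = (\xi_1,\dots,\xi_n)$ of Lipschitz functions, a point $x \in X$, and $f \in \Lip(X)$, set
\[
L_\xi[f](x) \;:=\; \inf_{a \in \R^n} \Lip\big[\, f - a\cdot\xi \,\big](x),
\]
and call $\xi$ \emph{independent at $x$} if $\Lip[\,a\cdot\xi\,](x) > 0$ for every $a \in \R^n\setminus\{0\}$; equivalently, no coordinate $\xi_i$ has $L_{\hat\xi_i}[\xi_i](x) = 0$, where $\hat\xi_i$ denotes $\xi$ with its $i$-th entry deleted. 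The target is to produce countably many $\mu$-measurable sets $X_m$ of positive measure, each carrying a fixed tuple $\xi^{(m)}$ from $\mathcal{D}$ that is independent at $\mu$-a.e.\ point of $X_m$ and \emph{maximal} there, meaning $L_{\xi^{(m)}}[g](x) = 0$ for every $g \in \mathcal{D}$ and $\mu$-a.e.\ $x \in X_m$; the chart axioms of Definition \ref{defn_measdiff} will follow once this is upgraded from $\mathcal{D}$ to all of $\Lip(X)$.

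First I would extract the consequences of \eqref{eq_Liplip}. As $\mathcal{D}$ is countable, for $\mu$-a.e.\ $x$ we have $\Lip[g](x) \le K\,{\rm lip}[g](x)$ simultaneously for all $g \in \mathcal{D}$; combined with subadditivity of $g \mapsto \Lip[g](x)$ and its bound by the global Lipschitz constant of $g$, this shows that $L_\xi[\cdot](x)$ is a seminorm at a.e.\ $x$ and, crucially, that being maximal against the dense family propagates: if $(g_j) \subset \mathcal{D}$ approximates $f$ locally uniformly with bounded Lipschitz constants, lower semicontinuity of $\Lip[\cdot](x)$ together with \eqref{eq_Liplip} prevents infinitesimal information from being lost in the limit, so $L_\xi[f](x) = 0$ whenever $L_\xi[g](x) = 0$ for all $g \in \mathcal{D}$. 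The crux of the whole argument is the \emph{uniform rank bound}: there is $N = N(K,\kappa) \in \N$ such that no tuple from $\mathcal{D}$ of length exceeding $N$ can be independent on a set of positive $\mu$-measure. The mechanism is the one behind the rank bounds advertised in the abstract, here in the Lipschitz-function setting: if $\xi = (\xi_1,\dots,\xi_n)$ were independent at every point of a positive-measure set $E$, pass to a Lebesgue density point $x_0$ of $E$ at which the $\Lip$ and ${\rm lip}$ of the $\xi_i$ and of their rational combinations are regular; at a small scale $r$, independence forces $\xi$ to spread $\bar B(x_0,r)\cap E$ over a genuinely $n$-dimensional, $\epsilon r$-separated configuration in $\R^n$, while the $\kappa$-doubling property caps the number of such points by a bound of the form $(C/\epsilon)^{Q}$; balancing these against each other and feeding a normalised near-extremal linear combination $a\cdot\xi$ into \eqref{eq_Liplip} produces a contradiction once $n$ exceeds a constant depending only on $K$ and $\kappa$. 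Making this packing-versus-doubling estimate and the choice of the offending function precise is the step I expect to be the main obstacle.

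Granting the bound, let $n(x)$ be the largest length of a tuple from $\mathcal{D}$ that is independent at $x$, so $n(x) \le N$ for $\mu$-a.e.\ $x$, and each level set $\{x : n(x) = k\}$ is $\mu$-measurable. On a fixed level set I would enumerate the countably many tuples of length $k$ from $\mathcal{D}$ and, for each one, intersect its independence set with the set of its Lebesgue density points and with $\{x : L_\xi[g](x) = 0 \ \forall g \in \mathcal{D}\}$; the $\kappa$-doubling hypothesis gives the Lebesgue differentiation theorem, which guarantees these sets carry positive measure and, after a disjointification, exhaust $\{n = k\}$ up to a $\mu$-null set. This writes $X$, modulo a null set, as a countable disjoint union of measurable sets $X_m$ of positive measure, each equipped with a tuple $\xi^{(m)} \in \mathcal{D}^{n(m)}$ that is independent and maximal (against $\mathcal{D}$) at $\mu$-a.e.\ point of $X_m$, with $n(m) \le N$.

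It remains to verify that each $X_m$ is a chart. Maximality against $\mathcal{D}$ upgrades, by the semicontinuity argument above, to $L_{\xi^{(m)}}[f](x) = 0$ for every $f \in \Lip(X)$ and $\mu$-a.e.\ $x \in X_m$; a measurable selection of a minimiser $a \in \R^{n(m)}$ then defines $Df(x)$, and by construction $\Lip\big[\,f - Df(x)\cdot\xi^{(m)}\,\big](x) = 0$, which is exactly the differentiability identity \eqref{eq_diffprop}. Passing to a further density subset on which independence of $\xi^{(m)}$ is quantitative yields $|Df(x)| \le C\,\Lip(f)$, so $Df \in L^\infty(X_m;\R^{n(m)})$, and measurability of $x \mapsto Df(x)$ is routine once the selection is performed on a measurable set. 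Uniqueness follows since any two admissible differentials $Df(x), D'f(x)$ satisfy $\Lip\big[\,(Df(x)-D'f(x))\cdot\xi^{(m)}\,\big](x) = 0$, forcing $Df(x) = D'f(x)$ by independence of $\xi^{(m)}$ at $x$. As $\mu\big(X \setminus \bigcup_m X_m\big) = 0$ and every $n(m)$ is bounded by $N$, the family $\{X_m\}$ is an atlas and $(X,d,\mu)$ supports a measurable differentiable structure, as claimed.
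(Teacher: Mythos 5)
First, a point of orientation: the paper does not prove this theorem --- it is quoted from Keith's 2004 work as background, and the paper's own contribution (Theorem \ref{thm_doublingdiff}) is a characterisation via derivations that takes Keith's theorem as known input. So your proposal is really being measured against Keith's original argument, whose overall architecture --- a countable dense family $\mathcal{D}$, the functional $L_\xi[f](x)=\inf_{a}\Lip[f-a\cdot\xi](x)$, a uniform dimension bound, and a decomposition into maximal-independence charts --- you have reproduced correctly as a skeleton.

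That said, the two steps that carry all the weight are exactly the two you leave open, and one of them is justified by a false assertion. (i) The propagation of maximality from $\mathcal{D}$ to all of $\Lip(X)$ cannot rest on ``lower semicontinuity of $\Lip[\cdot](x)$'': the functional $f\mapsto\Lip[f](x)$ is \emph{not} lower semicontinuous under locally uniform convergence with bounded Lipschitz constants. On $\R$ take $g_j(y)=\max(0,|y|-1/j)$, so $g_j\wsto|\cdot|$; then $\Lip[g_j](0)=0$ for every $j$ while $\Lip[|\cdot|](0)=1$, so infinitesimal information is lost in precisely the way you claim it is not. The actual mechanism is that ${\rm lip}$ (a $\liminf$ over scales) does survive such limits --- for each small $r$ one picks an approximant and a scale at which it is nearly linear in $\xi$, transfers the estimate to $f$ at that scale, and concludes that ${\rm lip}[f-a\cdot\xi](x)$ is small --- and only then does \eqref{eq_Liplip} convert this into $\Lip[f-a\cdot\xi](x)=0$. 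This is the one place the Lip-lip hypothesis is genuinely load-bearing, and your sketch does not isolate it. (ii) The uniform bound $N=N(K,\kappa)$: the packing-versus-doubling heuristic has the right flavour, but ``independence at $\mu$-a.e.\ point of $E$'' is an infinitesimal, non-quantitative statement; to extract $\gtrsim(c/\epsilon)^{n}$ many $\epsilon r$-separated image points at a single definite scale $r$ one must first make independence quantitative on a positive-measure subset (an exhaustion argument) and then show that a pointwise lower bound on $\Lip[a\cdot\xi]$ forces spreading of $\xi(\bar{B}(x_0,r)\cap E)$ at scale $r$ --- which again needs \eqref{eq_Liplip}, since $\Lip$ only sees arbitrarily small scales. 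You flag this as the main obstacle, correctly; but since it is the entire content of the dimension bound, the proposal remains an outline rather than a proof. Note also that the present paper's machinery offers no shortcut: the direction $(1)\Rightarrow(2)$ of Theorem \ref{thm_doublingdiff} produces derivations \emph{from} an already-existing differentiable structure, so deducing Keith's theorem from Theorem \ref{thm_doublingdiff} would require an independent construction of derivations from the Lip-lip condition, which the paper does not supply.
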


%It is a general fact \cite[Rmk 2.1.3]{Keith} that on quasiconvex metric spaces, every measurable differentiable structure is non-degenerate.

\subsection{New Results}

As indicated before, in this paper we discuss a new differentiability theorem of Rademacher type.

In particular, our main result {\em characterises} measurable differentiable structures on metric spaces that support doubling measures.  It is also a partial converse to the Cheeger and Keith theorems, in that consequences of their results provide hypotheses for ours.  A brief discussion of these hypotheses is therefore in order.

\subsubsection{Derivations}
One hypothesis is the existence of linear operators on bounded Lipschitz functions called (metric) derivations, as introduced by Weaver \cite{WeaverED}: 
$$
\d : \Lip_b(X) \to L^\infty(X;\mu)
$$
Briefly, these are generalizations of differential operators to the setting of metric measure spaces, with similar algebraic and continuity properties; see Definition \ref{defn_deriv}.  Since the zero map satisfies these conditions, the goal is to study spaces with {\em nontrivial} derivations.

Like vector fields on a Riemannian manifold, derivations on a fixed space have a linear algebraic structure, so the usual notions of linear independence, basis, and pushforward apply to them.

\subsubsection{Lip-derivation inequalities}
Suppose that a non-degenerate measurable differentiable structure exists on a given space $X$.  Indeed, if Equation \eqref{eq_diffprop} holds on a chart $X_m$ of $X$, then every Lipschitz function $f: X \to \R$ satisfies
$$
\Lip[f](x_0) \;=\;
\Lip\big[ D_mf(x_0) \cdot \xi_m \big](x_0) \;\leq\;
\sqrt{n(m)} L(\xi_m) \, \big|D_mf(x_0)\big|
$$
for $\mu$-a.e.\ $x_0 \in X_m$, and where the notation $D_mf = Df$ indicates the dependence on charts.  As observed by Cheeger \cite[Lemma 4.32]{Cheeger} the opposite inequality also holds, %(with quantitative constants) 
once a finer atlas is chosen for the space: see also Lemma \ref{lemma_lipdiff}.

If the differential $D_mf = (D_m^1f, \cdots, D_m^nf)$ is replaced by a linearly independent set of derivations ${\bf d} := (\d_i)_{i=1}^n$ on $X_m$ acting on $f$, then we call such a (two-sided) inequality a {\em Lip-derivation inequality}.  More precisely, there exists $K \geq 1$ so that
\begin{equation} \label{eq_lipderiv}
K^{-1} \, |{\bf d}f(x)| \;\leq\;
%K^{-1} \, \sum_{i=1}^n |\d_if(x)| \;\leq\; 
\Lip[f](x) \;\leq\; 
%K \, \sum_{i=1}^n |\d_if(x)|
K \, |{\bf d}f(x)|
\end{equation}
for all $f \in \Lip(X)$ and for $\mu$-a.e.\ $x \in X_m$.

\vspace{.05in}

With these hypotheses, we now present our main result.

\begin{thm} \label{thm_doublingdiff}
Let $(X,d)$ be a metric space, let $\mu$ be a doubling measure on $X$, and let $\{X_m\}_{m=1}^\infty$ be a collection of $\mu$-measurable subsets of $X$, with
$$
\mu\Big( X \setminus \bigcup_{m=1}^\infty X_m \Big) \;=\; 0
$$
and $\mu(X_m) > 0$, for all $m \in \N$.
Then the following conditions are equivalent:
\begin{enumerate}
\item $(X,d,\mu)$ supports a non-degenerate measurable differentiable structure, with charts $\{(X_m,\xi^m)\}_{m=1}^\infty$;
\vspace{.025in}
\item On each $X_m$, there is a linearly independent set of derivations on $X_m$ 
so that 
%the Lip-derivation 
inequality \eqref{eq_lipderiv} holds for all Lipschitz functions on $X$.
\end{enumerate}
\end{thm}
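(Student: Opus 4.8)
The plan is to establish the two implications separately; the substance lies in (2)$\Rightarrow$(1), and in turn in a new rank bound for derivations under the doubling hypothesis.

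\emph{(1)$\Rightarrow$(2).} Given a chart $(X_m,\xi^m)$ with $\xi^m=(\xi^m_1,\dots,\xi^m_n)$ and differential $f\mapsto D_mf=(D_m^1f,\dots,D_m^nf)$, I would take the components of the differential themselves as the candidate derivations, $\d_i f:=D_m^i f$. Linearity of $\d_i$ is immediate from the uniqueness clause of Definition~\ref{defn_measdiff}; the Leibniz rule $D_m(fg)=f\,D_mg+g\,D_mf$ follows by multiplying the first-order expansions of $f$ and $g$ furnished by \eqref{eq_diffprop}; and, with the a priori bound $\|D_m^i f\|_\infty\le\|D_mf\|_\infty\le K\,L(f)$ supplied by Lemma~\ref{lemma_lipdiff}, one verifies the weak-$*$ continuity required of a Weaver derivation in the sense of Definition~\ref{defn_deriv}. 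Linear independence is forced by the relations $\d_i\xi^m_j=\delta_{ij}$ holding $\mu$-a.e. Finally, one half of \eqref{eq_lipderiv} is the elementary estimate $\Lip[f](x)\le\sqrt{n}\,L(\xi^m)\,|{\bf d}f(x)|$ extracted from \eqref{eq_diffprop}, while the reverse bound $|D_mf(x)|\le K\,\Lip[f](x)$ is exactly Cheeger's observation, Lemma~\ref{lemma_lipdiff}.

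\emph{(2)$\Rightarrow$(1).} Fix $m$ and a linearly independent family ${\bf d}=(\d_1,\dots,\d_n)$ on $X_m$ satisfying \eqref{eq_lipderiv}. The crucial first step, and the only place where the doubling hypothesis enters essentially, is to bound $n$ by a constant $N$ determined by the doubling exponent $Q$; this yields the uniform bound $0\le n(m)\le N$ of Definition~\ref{defn_measdiff}(2). Here I would work at a $\mu$-density point $x_0\in X_m$: linear independence of the $\d_i$ is equivalent, by a measurable-selection argument, to the $\mu$-a.e.\ surjectivity of the evaluation map $g\mapsto{\bf d}g(x)\in\R^n$ on $\Lip_b(X)$, so one may choose $f_1,\dots,f_n\in\Lip(X)$ with ${\bf d}f_i(x_0)$ arbitrarily close to the standard basis vectors. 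Then \eqref{eq_lipderiv} makes $F=(f_1,\dots,f_n)$ ``infinitesimally bi-Lipschitz'' at $x_0$, in the sense that $\Lip[v\cdot F](x_0)$ is comparable, with constants depending only on $K$, to $|v|$ for every $v\in\R^n$; a blow-up comparison of the $\mu$-measures of balls centred at $x_0$ with the sizes of their $F$-images, run against the doubling inequality, then caps $n$ in terms of $Q$. Non-degeneracy of the resulting structure holds because some $X_m$ carries a nonempty such family, so $n(m)\ge1$ there.

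It remains to construct coordinates and verify the differentiation property. Using again the a.e.\ surjectivity of $g\mapsto{\bf d}g(x)$ together with an exhaustion, I would partition $X_m$, up to a $\mu$-null set, into countably many measurable pieces on each of which there exist Lipschitz functions $g_1,\dots,g_n:X\to\R$ such that the matrix $M(x):=(\d_i g_j(x))_{i,j}$ is invertible with $\|M(x)^{-1}\|$ essentially bounded; on such a piece set $\xi^m:=(g_1,\dots,g_n)$, so that $M(x)=(\d_i\xi^m_j(x))_{i,j}$. (This refinement of $\{X_m\}$ does not disturb the hypotheses and is the atlas one retains.) Given $f\in\Lip(X)$, define $Df(x):=M(x)^{-1}{\bf d}f(x)\in L^\infty(X_m;\R^n)$. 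Since ${\bf d}(f-q\cdot\xi^m)(x)={\bf d}f(x)-M(x)q$ for constant vectors $q$, applying the upper bound of \eqref{eq_lipderiv} to the fixed Lipschitz functions $f-q\cdot\xi^m$ as $q$ ranges over a countable dense subset of $\R^n$, and then letting $q\to Df(x)$ while controlling $\Lip[(q-Df(x))\cdot\xi^m](x)\le\sqrt{n}\,|q-Df(x)|\,L(\xi^m)$, yields $\Lip[f-Df(x)\cdot\xi^m](x)=0$ for $\mu$-a.e.\ $x\in X_m$, which is \eqref{eq_diffprop}. Uniqueness of $Df$ is symmetric: if $D'f$ also works then $\Lip[(Df(x)-D'f(x))\cdot\xi^m](x)=0$ a.e., and the lower bound of \eqref{eq_lipderiv}, after the same reduction to a countable dense set of coefficient vectors, forces $M(x)(Df(x)-D'f(x))=0$ and hence $Df=D'f$ $\mu$-a.e.

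I expect the rank bound to be the main obstacle: it is the step that genuinely uses the geometry of doubling measures and requires quantitative control of how several independent derivations interact with the metric through the Lip-derivation inequality. Once it is in hand, the construction of coordinates and the verification of \eqref{eq_diffprop} are an essentially linear-algebraic affair built on the invertibility of $M(x)$, the only recurring technical nuisance being that every $\mu$-a.e.\ statement involving an $x$-dependent coefficient vector must be obtained by first fixing rational coefficients and passing to a limit.
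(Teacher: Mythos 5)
Your proposal has two genuine gaps, one in each direction, and both occur at precisely the steps that carry the real weight of the paper. First, in (2)$\Rightarrow$(1), the rank bound. Your ``blow-up comparison of the $\mu$-measures of balls with the sizes of their $F$-images'' is a heuristic, not an argument: the lower half of \eqref{eq_lipderiv} only says that for each fixed direction $v$ there exist points $y$ arbitrarily close to $x_0$ with $|v\cdot(F(y)-F(x_0))|\gtrsim |v|\,d(x_0,y)$. This does not make $F$ bi-Lipschitz near $x_0$, gives no lower bound on the size of $F(B(x_0,r))$ in any sense that can be ``run against'' the doubling inequality, and there is no canonical measure on the image to compare with $\mu$. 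Indeed, doubling spaces generally admit no bi-Lipschitz embedding into any $\R^n$, which is exactly why the paper proves Lemma~\ref{lemma_doublingbd2} by a long detour: Assouad's embedding (Theorem~\ref{thm_assouad}) of the snowflaked space, separate Lipschitz approximations of $\zeta$ and $\zeta^{-1}$ on $\e$-nets, pushforward derivations via Lemma~\ref{lemma_pushfwd}, and the rank bound $n$ for $\U(\R^n,\nu)$ from Lemma~\ref{lemma_fingenbd}. You correctly identify this as the main obstacle, but you have not supplied a proof of it. By contrast, your construction of the differential itself --- $Df(x)=M(x)^{-1}{\bf d}f(x)$, with \eqref{eq_diffprop} verified by applying the upper bound of \eqref{eq_lipderiv} to $f-q\cdot\xi^m$ for rational $q$ and then letting $q\to Df(x)$ using $\Lip[(q-Df(x))\cdot\xi^m](x)\le\sqrt{n}\,L(\xi^m)\,|q-Df(x)|$ --- is correct, and is in fact more direct than the paper's route through piecewise-distance approximations and Mazur's lemma; the uniqueness argument via the lower bound works the same way.

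Second, in (1)$\Rightarrow$(2), weak-$*$ continuity. You assert that the bound $\|D_m^if\|_\infty\le C\,L(f)$ from Lemma~\ref{lemma_lipdiff} lets one ``verify'' that $f\mapsto D_m^if$ is weakly continuous in the sense of Definition~\ref{defn_deriv}. It does not: if $f_k\wsto f$, boundedness only produces weak-$*$ convergent subsequences of $\{D_mf_k\}$, with no identification of the limit as $D_mf$. Identifying that limit is the entire content of Section~5 of the paper: one needs reflexivity of $H^{1,p}$ and of the Haj{\l}asz space $M^{1,p}$ (Lemma~\ref{lemma_reflexive}), which rests on the uniform convexity of the norms $|v|_x=\Lip[v\cdot\xi_m](x)$ via F.~John's theorem, followed by a Hahn--Banach/duality argument for the functionals $f\mapsto\int\varphi\,\partial_if\,d\mu$. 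That this step has genuine content is witnessed by the paper's corollary on Cantor-type fractals, where $M^{1,p}$ fails to be reflexive and non-degenerate differentiable structures fail to exist; so no soft argument from the $L^\infty$ bound alone can close this gap. The remaining pieces of your (1)$\Rightarrow$(2) --- linearity, the Leibniz rule from multiplying first-order expansions, the relations $\d_i\xi^m_j=\delta_{ij}$, and the two-sided inequality via Lemma~\ref{lemma_lipdiff} --- are fine.
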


\begin{rmk}
The direction ``(2) $\Rightarrow$ (1)'' extends Pansu's theorem from Carnot groups to metric spaces that support doubling measures.  Indeed, the associated horizontal vector fields on a Carnot group are well-defined derivations \cite[Thm 39]{WeaverED}.  The novelty here is that the {\em bracket-generating condition}, which ensures a well-defined metric from these vector fields, can be substantially weakened to the Lip-derivation inequality.

On the other hand, Condition (2) is a {\em linear} hypothesis on the space.  Both Theorems \ref{thm_cheeger} and \ref{thm_keith} have non-linear hypotheses but follow from {\em non-constructive} proofs, in that the differential map $f \mapsto D_mf$ arises from abstract ``dimensional'' arguments for generalized linear functions.  It would be of interest if one could prove a Rademacher-type theorem where the differential map is explicitly constructed, such as in the analysis on fractals \cite{Kigami}.

Recently Schioppa \cite[Thm 5.9]{Schioppa} has generalised the direction (2) $\Rightarrow$ (1), where one only requires a {\em one-sided} Lip-derivation inequality and where the constant $K_m$ can depend on the point.  Moreover, he shows that the linearly independent sets of derivations in Theorem \ref{thm_doublingdiff} are in fact bases \cite[Cor 6.15]{Schioppa}. 
\end{rmk}

\begin{rmk}
The other direction ``(1) $\Rightarrow$ (2)'' gives a new proof that spaces supporting doubling measures and Lip-lip conditions also support nontrivial derivations.  The case of spaces $X$ supporting Poincar\'e inequalities was shown earlier by Cheeger and Weaver \cite[Thm 43]{WeaverED}.  Our proof, like theirs, relies on a robust theory of Sobolev functions on such spaces. 
\end{rmk}

In the latter direction, Theorem \ref{thm_doublingdiff} requires the crucial property (Lemma \ref{lemma_reflexive}) of reflexivity for the Haj{\l}asz-Sobolev spaces $M^{1,p}(X)$, for $p > 1$.  It is worth noting that for certain fractal subsets $S$ of $\R^n$ equipped with their natural self-similar measures, $M^{1,p}(S)$ is neither separable nor reflexive for any $p \in (1,\infty)$ \cite{Rissanen}.  As a consequence, this gives a new non-differentiability result for such fractals.

\begin{cor}
Let $K$ be a self-similar fractal of Cantor type in $\R^n$.  If $\H$ is the natural self-similar (Hausdorff) measure associated to $K$, then $K$ does not support a non-degenerate measurable differentiable structure with respect to $\H$.
\end{cor}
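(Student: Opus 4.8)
The plan is a short proof by contradiction: assuming a non-degenerate differentiable structure on $(K,d,\H)$, one invokes the reflexivity Lemma~\ref{lemma_reflexive} to conclude that the Haj{\l}asz--Sobolev space $M^{1,p}(K)$ is reflexive for every $p>1$, and then plays this against the non-reflexivity theorem of \cite{Rissanen}.

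First I would check that the framework of Theorem~\ref{thm_doublingdiff} applies. Being of Cantor type, $K$ is the attractor of an iterated function system satisfying the strong separation condition (in particular the open set condition), so its natural self-similar measure $\H$ is a constant multiple of the $s$-dimensional Hausdorff measure restricted to $K$, where $s$ is the similarity dimension; such a measure is Ahlfors $s$-regular and hence $\kappa$-doubling for a suitable $\kappa \geq 1$. Thus $(K,d,\H)$ is a doubling metric measure space. Now suppose, toward a contradiction, that $(K,d,\H)$ supports a non-degenerate measurable differentiable structure. By Lemma~\ref{lemma_reflexive} --- the reflexivity input used in the implication ``(1) $\Rightarrow$ (2)'' of Theorem~\ref{thm_doublingdiff} --- the space $M^{1,p}(K)$ is reflexive for every $p \in (1,\infty)$. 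On the other hand, by \cite{Rissanen} the space $M^{1,p}(K)$ is neither separable nor reflexive for any $p \in (1,\infty)$, since $K$ carries its natural self-similar measure. This contradiction shows that no non-degenerate measurable differentiable structure can exist on $(K,d,\H)$, which is the assertion of the corollary.

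The argument has no genuine analytic obstacle of its own; the two points requiring attention are essentially bookkeeping. One must ensure that the self-similar fractals and self-similar measures covered by the corollary are indeed among those treated in \cite{Rissanen} --- this is the role of the hypothesis ``of Cantor type,'' i.e.\ totally disconnected self-similar sets satisfying the strong separation condition --- and, more substantively, that the existence of a non-degenerate differentiable structure on a doubling space genuinely \emph{forces} reflexivity of $M^{1,p}$, that is, that reflexivity is a consequence of such a structure (via Lemma~\ref{lemma_reflexive}) rather than merely an ancillary hypothesis that happens to be invoked in parallel with it. Granting these, the Ahlfors regularity of $\H$ being standard, the proof is immediate.
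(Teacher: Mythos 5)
Your proposal is correct and is exactly the argument the paper intends: the corollary is presented as an immediate consequence of Lemma~\ref{lemma_reflexive} (a non-degenerate measurable differentiable structure on a doubling space forces $M^{1,p}$ to be reflexive) combined with the non-reflexivity result of \cite{Rissanen} for Cantor-type self-similar fractals. Your additional bookkeeping --- verifying that the natural self-similar measure is Ahlfors regular, hence doubling, so that the inclusion $M^{1,p}\subseteq H^{1,p}$ underlying Lemma~\ref{lemma_reflexive} applies, and that reflexivity is genuinely a consequence rather than a hypothesis --- is exactly the right set of points to check.
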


To clarify, such sets $K$ are constructed as invariant subsets under similitude maps $S_j : \R^n \to \R^n$, $j = 1, 2, \ldots, N$, of the form
$S_j(x) = \lambda_j (R_jx) + v_j$,
for fixed $\lambda_j \in (0,1)$, $R_j \in SO(n,\R)$, and $v_j \in \R^n$.  The invariance then reads as
$$
K \;=\; \bigcup_{i=1}^N S_j(K).
$$
Moreover, $K$ is {\em of Cantor type} if $S_i(K) \cap S_j(K) = \emptyset$ holds whenever $i \neq j$.

We note that there exist self-similar fractals, not of Cantor type, but still lack such structures.  For example, the middle-thirds Sierpi\'nski carpet admits a degenerate measurable differentiable structure with respect to its natural Hausdorff measure; in fact, it supports no nonzero derivations \cite[Thm 41]{WeaverED}.  It would be interesting to determine a sharp criterion for fractals with measurable differentiable structures, but to the author's knowledge, such results remain unknown.

\subsection{Regarding the doubling condition}

In some sense, the doubling condition in Theorem \ref{thm_doublingdiff} is close to necessary.  Indeed, Bate and Speight \cite{Bate:Speight} proved that if a space $(X,d,\mu)$ supports a measurable differentiable structure, then $\mu$ must be pointwise doubling, but not necessarily with a uniform constant $\kappa$; that is, for $\mu$-a.e.\ $x \in X$ we have
$$
0 \;<\;
\limsup_{r \to 0} \frac{\mu(B(x,2r))}{\mu(B(x,r))} \;<\;
\infty.
$$

Returning to the setting of (uniform) doubling measures, the key step in the proof of Theorem \ref{thm_doublingdiff} is a new fact of possibly independent interest.

\begin{lemma} \label{lemma_doublingbd} 
On metric spaces supporting $\kappa$-doubling measures, the module of derivations is necessarily of finite rank, and the rank bound depends only on $\kappa$.
\end{lemma}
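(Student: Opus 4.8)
\emph{Overview and reduction.} The plan is to reduce the assertion to a bound on the number of linearly independent derivations carried by one set of positive measure, to put such a family into a normal form by a measurable linear-algebra argument, and then to extract from the $\kappa$-doubling inequality a packing estimate that bounds this number. By definition the rank of the module of derivations is the supremum of those $N \in \N$ for which there are derivations $\d_1, \dots, \d_N : \Lip_b(X) \to L^\infty(X;\mu)$ and a Borel set $E$, $\mu(E) > 0$, on which $\d_1, \dots, \d_N$ are linearly independent over $L^\infty(\mu)$; so it suffices to bound such an $N$ by a quantity depending only on $\kappa$. Linear independence on $E$ means that for suitable bounded Lipschitz $f_1, \dots, f_N$ the matrix $\bigl(\d_i f_j\bigr)_{i,j=1}^N$ of $L^\infty$ functions is invertible on a subset of positive measure; since $\sum_k c_k\d_k$ is again a derivation for any $c_k \in L^\infty(\mu)$, one may replace the $\d_i$ by the combinations $\sum_k (A^{-1})_{ik}\d_k$. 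Following this with a Lusin argument, a subdivision of $E$ that makes the relevant $L^\infty$ functions nearly constant, and an overall rescaling, I would arrive at a set $E' \subseteq E$ with $\mu(E') > 0$, a constant $\Lambda$, and $1$-Lipschitz functions $g_1, \dots, g_N : X \to \R$ with
\[
\d_i g_j \;=\; \delta_{ij}, \qquad \|\d_i\| \;\le\; \Lambda \qquad (1 \le i,j \le N)
\]
$\mu$-a.e.\ on $E'$, where $\|\d_i\|$ denotes the local operator-norm density. Writing $F := (g_1, \dots, g_N) : X \to (\R^N, \|\cdot\|_\infty)$, which is $1$-Lipschitz, and applying $|\d_i\varphi| \le \|\d_i\|\,\Lip[\varphi]$ to $\varphi = a\cdot F$ (so $\d_i\varphi = a_i$ on $E'$) gives $\Lip[\,a\cdot F\,](x) \ge \Lambda^{-1}\|a\|_\infty$ for $\mu$-a.e.\ $x \in E'$ and every $a \in \R^N$, so $F$ collapses no direction infinitesimally. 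The delicate point here is to arrange $\Lambda$ to be an \emph{absolute} constant rather than one growing with $N$; this is done by choosing the initial $\d_1, \dots, \d_N$ to be ``well conditioned'' --- essentially a measurable Auerbach frame for the unit ball of the derivation module.

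\emph{Change of variables: producing separated points.} Each normalised derivation $\d_i$ admits a ``change-of-variables'' representation: $\mu|_{E'}$ is an average of $1$-Lipschitz curve fragments along which $g_i$ increases at a rate bounded below by a fixed constant, while --- crucially, because $\d_i g_j = 0$ for $j \ne i$ --- each $g_j$ with $j \ne i$ is \emph{constant} along those fragments. Hence for $\mu$-a.e.\ $x \in E'$ and every sufficiently small $s > 0$ one can move a bounded arc-length along the $i$-th fragment through $x$ to reach a point $x_i$ with $d(x, x_i) \le Cs$, $|g_i(x_i) - g_i(x)| \ge s$, and $g_j(x_i) = g_j(x)$ for $j \ne i$. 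Comparing $i$-th coordinates, the $N$ points $x_1, \dots, x_N$ satisfy $\|F(x_i) - F(x_j)\|_\infty \ge s$ whenever $i \ne j$; since $F$ is $1$-Lipschitz, $d(x_i, x_j) \ge s$. Thus $\bar B(x, Cs)$ contains $N$ points that are pairwise $s$-separated, with $C$ absolute.

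\emph{Packing and conclusion.} Since $\mu$ is $\kappa$-doubling, the cardinality of an $s$-separated subset of a ball of radius $Cs$ is bounded: if $p_1, \dots, p_N \in \bar B(x, Cs)$ are $s$-separated, the balls $B(p_m, s/2)$ are pairwise disjoint and contained in $B(x, 2Cs)$, each satisfies $\mu(B(p_m, s/2)) \ge \kappa^{-m_0}\mu(B(x, Cs))$ for some $m_0 = m_0(C)$, and $\mu(B(x, 2Cs)) \le \kappa\,\mu(B(x, Cs))$; summing yields $N \le \kappa^{\,m_0+1}$. Hence the rank of the module of derivations is at most $\kappa^{\,m_0+1} =: C(\kappa)$, which depends only on $\kappa$. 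I expect the main obstacle to lie in the second step rather than in the packing: converting the purely algebraic relations $\d_i g_j = \delta_{ij}$ into a \emph{geometric} decoupling of $N$ directions requires genuinely integrating the derivations --- it is here that the linear-algebraic structure of derivations and the change-of-variables machinery enter --- and both this realisation and the normalisation of $\Lambda$ must be carried out with all constants independent of $N$, since otherwise the final bound $N \le \kappa^{m_0+1}$ would be vacuous.
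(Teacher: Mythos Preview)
Your approach is genuinely different from the paper's, and the gap you yourself flag in Step~2 is real and not closed.

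The paper's argument avoids curve fragments entirely. It fixes $s\in(0,1)$, invokes the Assouad--Naor--Neiman embedding $\zeta:(X,d^s)\to\R^n$ with $n=n(\kappa)$, and works around the fact that $\zeta$ is not Lipschitz on $(X,d)$ by taking piecewise-distance approximations $[\zeta]_\e$ built from $\e$-nets. These \emph{are} Lipschitz (with constants blowing up as $\e\to 0$), so derivations push forward under them to $\U(\R^n,([\zeta]_\e)_\#\mu)$. Since $\Lip_b(\R^n)$ is $n$-generated, that module has rank at most $n$ (Lemma~\ref{lemma_fingenbd}). A Jacobian argument, localised at density points and using Mazur's lemma to upgrade weak-$*$ to a.e.\ convergence, then shows that if $\U(X,\mu)$ had rank $M>n$ the relevant determinants would have to be simultaneously zero (from the Euclidean rank bound) and positive (from Corollary~\ref{cor_nonsing}). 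The bound is $M\le n(\kappa)$.

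Your Step~2 asserts that each $\d_i$ can be realised by curve fragments along which $g_i$ has speed bounded below and every other $g_j$ is \emph{constant}. That is essentially an Alberti-representation theorem for Weaver derivations (Bate, Schioppa), a substantial result in its own right which is not developed here and does not follow from the ``change of variables'' machinery in this paper---those arguments are purely linear-algebraic manipulations of Jacobi matrices, not an integration of derivations into curves. Even granting such a representation for $\d_i$, one must check that it is compatible with $\d_i$ along entire fragments (not merely at the basepoint) before $\d_ig_j=0$ forces $g_j$ to be constant on them; without this the $N$ pairwise $s$-separated points in $\bar B(x,Cs)$ are never produced, and the packing step has nothing to pack. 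The normalisation of $\Lambda$ via a ``measurable Auerbach frame'' is likewise only gestured at; in an $L^\infty$-module setting this requires a measurable-selection argument you have not supplied, and without it the final inequality $N\le\kappa^{m_0+1}$ is vacuous.
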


The above lemma ensures that there is a uniform rank bound to the basis of each $\U(X_m,\mu)$.  Put in effect, 
this gives a fixed dimension for the measurable differentiable structure in Theorem \ref{thm_doublingdiff}.

As for Lemma \ref{lemma_doublingbd}, its proof requires ``snowflaking'' the given space and applying a variant of Assouad's embedding theorem \cite{Assouad}.  In some sense the result is surprising, since snowflaked metric spaces do not support nontrivial derivations in general \cite[Thm 36]{WeaverED}.  To avoid this apparent impasse, one takes Lipschitz approximations of the embedding and its inverse separately.  Subsequently, pushforward derivations on Euclidean spaces can then be used without assuming any injectivity of the Lipschitz maps.

In a similar direction, Lang and Z\"ust \cite{Lang:Zuest} have proved a version of Lemma \ref{lemma_doublingbd} for currents on metric spaces.  Though it is known \cite{Gong_currents} that $k$-dimensional currents induce bases of derivations of rank-$k$, the result of Lang and Z\"ust applies to a larger class of spaces --- namely, those with finite {\em Nagata dimension} which, as studied by Lang and Schlichenmaier, includes the case of doubling measures \cite{Lang:Schlichenmaier}.  Related to this, Z\"ust \cite{Zust} has also used Assouad's embedding to show that {\em normal currents} on doubling metric spaces are exactly pushforwards of Euclidean currents.

\subsection{Connections to the Lip-lip condition}
The method of using derivations to prove differentiability theorems applies to other settings as well. As one example, the Rademacher property holds for metric measure spaces that satisfy the Lip-derivation inequality and on which bounded Lipschitz functions form a finitely generated algebra (Theorem \ref{thm_fingen-measdiff}).

It is not known if the Lip-lip condition is necessary for Rademacher-type theorems on metric measure spaces.  This motivates the following open question.

\begin{ques} \label{ques_indep}
Is there a metric space that supports a doubling measure and a nondegenerate measurable differentiable structure, but where the Lip-Lip condition fails on a set of positive measure?
\end{ques}

A theorem of Cheeger \cite[Thm 14.2]{Cheeger} states that for spaces which support doubling measures and Poincar\'e inequalities, as well as an additional measure density condition \cite[Conj 4.63]{Cheeger}, the images of charts $X_m$ under coordinates $\xi_m$ must be $n(m)$-rectifiable sets.  It is reasonable to expect, more generally, that doubling spaces satisfying the Lip-lip condition would also enjoy the same geometric rigidity, but to the author's knowledge, such a result also remains unknown.

\vspace{.05in}

{\bf Plan of the Paper}.\ Section \S1 has provided an introduction to the work and a summary of our main results.  Section \S\ref{sect_prelim} reviews basic facts about Lipschitz functions and derivations on metric measure spaces.

To motivate the proof ideas later, Section \S\ref{sect_fingen} begins with metric spaces on which bounded Lipschitz functions form a finitely-generated algebra; the existence of measurable differentials, in such settings, becomes a Euclidean matter.

The case of doubling measures is treated in Section \S\ref{sect_doubling}, which includes the fact that the doubling condition imposes a rank bound for derivations.  In Section \S\ref{sect_suff} we address the necessity of nontrivial derivations and Lip-derivation inequalities for measurable differentiable structures.

\vspace{.1in}

{\bf Acknowledgments}.\ The author thanks John Mackay, Pekka Pankka, Elefterios Soultanis, Jeremy Tyson, Antti V\"ah\"akangas, Kevin Wildrick, Xiao Zhong, and Thomas Z\"urcher for helpful discussions which led to improvements in this work.  He also thanks the Oberwolfach Mathematics Institute for the hospitality during the Seminar in Lipschitz Analysis, held in November 2010, which inspired many ideas in this work.

\section{Preliminaries} \label{sect_prelim}

Here and in the sequel we will consider only metric measure spaces $(X,d,\mu)$, that is: metric spaces $(X,d)$ equipped with Borel measures $\mu$.  Moreover, the metric spaces in question are always assumed to be separable.  Several classes of functions will often appear in the paper:

$P_n$, the set of all polynomials in $n$ variables, with coefficients in $\R$,

$\Lip(X)$, the set of all Lipschitz functions on $X$,

$\Lip_b(X)$, the set of all bounded Lipschitz functions on $X$.

\subsection{Lipschitz functions}

The Lipschitz constant of a function $f: X \to \R$ is
$$
L(f) \;=\; \sup\left\{ \frac{|f(y)-f(x)|}{d(x,y)} \,;\, x,y \in X,\, x \neq y\right\}
$$
and if $L(f) \leq K$, then $f$ is called $K$-Lipschitz.

The proofs in later sections also use pointwise Lipschitz constants, defined in \S\ref{sect_intro}, as a replacement for the norm of the gradient.  We begin with a weak version of the Chain Rule for pointwise Lipschitz constants.

\begin{lemma} \label{lemma_polylipzero}
Let $f = (f_i)_{i=1}^n \in [\Lip(X)]^n$ and $x \in X$.  If $\Lip[f_i](x) = 0$ holds for each $1 \leq i \leq n$, then $\Lip[p \circ f](x) = 0$ for all $p \in P_n$.
\end{lemma}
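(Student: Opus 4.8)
The plan is to reduce the claim for an arbitrary polynomial $p \in P_n$ to the two basic cases of sums and products, since every polynomial is built from the coordinate functions by finitely many additions and multiplications (and scalar multiplications). First I would record the elementary properties of $\Lip[\cdot](x)$ as a seminorm-like object: for $g, h \in \Lip(X)$ one has the subadditivity $\Lip[g+h](x) \le \Lip[g](x) + \Lip[h](x)$ and the homogeneity $\Lip[cg](x) = |c|\,\Lip[g](x)$, both immediate from the definition via the difference quotients. These already handle linear combinations, so the content is in the behaviour of $\Lip[\cdot](x)$ under products.

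The key step is a Leibniz-type inequality: if $g, h \in \Lip(X)$ and $g$ is, say, locally bounded near $x$ (which holds automatically for Lipschitz functions on a metric space), then
\begin{equation*}
\Lip[gh](x) \;\le\; |g(x)|\,\Lip[h](x) \;+\; |h(x)|\,\Lip[g](x).
\end{equation*}
To see this, write $g(y)h(y) - g(x)h(x) = g(y)\big(h(y)-h(x)\big) + h(x)\big(g(y)-g(x)\big)$, divide by $d(x,y)$, and take $\limsup_{y\to x}$; the factor $g(y)$ tends to $g(x)$ by continuity of $g$, so the first term contributes $|g(x)|\,\Lip[h](x)$ and the second $|h(x)|\,\Lip[g](x)$. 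In particular, if $\Lip[g](x) = \Lip[h](x) = 0$, then $\Lip[gh](x) = 0$.

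With these two facts in hand the lemma follows by induction on the structure of $p$. Monomials $c\, f_1^{a_1}\cdots f_n^{a_n}$ have vanishing $\Lip[\cdot](x)$ by repeated application of the product inequality together with the hypothesis $\Lip[f_i](x) = 0$ (a constant function also has vanishing pointwise Lipschitz constant, covering the case of zero exponents); then subadditivity and homogeneity extend this to an arbitrary finite sum of monomials, i.e.\ to all $p \in P_n$. The only mild subtlety worth checking carefully is that the product inequality genuinely uses only continuity of $g$ at $x$ — not a global Lipschitz bound on the product — so that the induction stays within reach; but since each $f_i$ and each partial product is Lipschitz, hence continuous, there is no real obstacle here. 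The main point to get right is simply the algebraic identity for the product difference and the passage to the $\limsup$, which I expect to be routine.
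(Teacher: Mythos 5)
Your proposal is correct and follows essentially the same route as the paper: both reduce to monomials, telescope the product difference so that each term carries exactly one factor of the form $f_i(y)-f_i(x)$, and pass to the $\limsup$ using subadditivity and continuity of the remaining factors. Your packaging of this as a binary Leibniz inequality $\Lip[gh](x) \le |g(x)|\,\Lip[h](x) + |h(x)|\,\Lip[g](x)$, iterated inductively, is just a cleaner organization of the paper's direct estimate for powers followed by insertion of auxiliary terms, and your remark that only continuity of the partial products at $x$ (not a global Lipschitz bound) is needed correctly disposes of the one point requiring care.
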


\begin{proof}
It suffices to check monomials %prove the case of monomials, i.e.\ functions of the form
$
p(y) = \prod_{i=1}^n y_i^{m_i} %y_1^{m_1} y_2^{m_2} \cdots y_n^{m_n}
$,
for $\{m_i\}_{i=1}^n$ in $\N$.  We proceed by induction, so for $n=1$, put $f = f_1$.  Since $\Lip[f](x) = 0$, it follows that $f$ is continuous at $x$.  %Without loss, assume that 
For $m = m_1 > 1$, we estimate %so the Binomial theorem implies that 
\begin{eqnarray*}
\Lip[f^m](x) &=&
\limsup_{y \to x} \frac{|f(y)^m-f(x)^m|}{d(x,y)} \\ &\leq&
\limsup_{y \to x} \Big\{ 
\frac{|f(y)-f(x)|}{d(x,y)} \sum_{a=1}^m |f(y)|^{m-a}|f(x)|^{a-1} \Big\} \\ &=&
m|f(x)|^{m-1} \cdot \Lip[f](x) \;=\; 0.
\end{eqnarray*}
As for $n \geq 1$, we use the Triangle inequality, include auxiliary terms
$$
f_1(x)^{m_1} \prod_{i=2}^n f_i(y)^{m_i} \text{ and }
f_1(x)^{m_1}f_2(x)^{m_2} \prod_{i=3}^n f_i(y)^{m_i} \text{ and so on, }
$$
and estimate similarly as before.
\end{proof}

We proceed with two more facts about Lipschitz functions.  For their proofs, see \cite{McShane} and \cite{Arens:Eells}, respectively.

\begin{lemma}[McShane,Whitney] \label{lemma_mcshane}
For a metric space $(X,d)$ and for $A \subset X$, each $f \in \Lip(A)$ admits a $L(f)$-Lipschitz extension to all of $X$, given by
$$
x \,\mapsto\, \inf \big\{ f(a) \,+\, L(f) \cdot d(x,a) \,:\, a \in A \big\}.
$$
\end{lemma}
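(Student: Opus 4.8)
The plan is to verify directly that the displayed formula gives the required extension; there is no nonconstructive step here, only three short checks. Write $L := L(f)$ and $g(x) := \inf\{\, f(a) + L\, d(x,a) : a \in A \,\}$ for $x \in X$.

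First I would check that $g$ is real-valued, i.e.\ that the infimum is not $-\infty$; this is the one place where a little care (specifically, the triangle inequality) is needed. Fix any $a_0 \in A$. For every $a \in A$, the hypothesis $f(a_0) - f(a) \leq L\, d(a_0,a)$ combined with $d(a_0,a) \leq d(a_0,x) + d(x,a)$ gives $f(a) + L\, d(x,a) \geq f(a_0) - L\, d(a_0,x)$. Taking the infimum over $a \in A$ yields $g(x) \geq f(a_0) - L\, d(a_0,x) > -\infty$, while plugging $a = a_0$ shows $g(x) \leq f(a_0) + L\, d(a_0,x) < \infty$. Hence $g \colon X \to \R$ is well-defined.

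Next I would show $g$ restricts to $f$ on $A$. For $x \in A$, choosing $a = x$ in the infimum gives $g(x) \leq f(x)$. Conversely, for every $a \in A$ the bound $f(x) - f(a) \leq L\, d(x,a)$ rearranges to $f(a) + L\, d(x,a) \geq f(x)$, and taking the infimum over $a$ gives $g(x) \geq f(x)$; therefore $g(x) = f(x)$ for all $x \in A$.

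Finally, the Lipschitz estimate: for $x, y \in X$ and any $a \in A$ the triangle inequality gives $f(a) + L\, d(x,a) \leq \big( f(a) + L\, d(y,a) \big) + L\, d(x,y)$, and taking the infimum over $a$ on both sides produces $g(x) \leq g(y) + L\, d(x,y)$. Interchanging the roles of $x$ and $y$ gives the reverse inequality, so $|g(x) - g(y)| \leq L\, d(x,y)$, i.e.\ $g$ is $L$-Lipschitz. I do not anticipate any genuine obstacle; the only subtlety is the finiteness of the infimum in the first step, and everything else is a one- or two-line manipulation of the Lipschitz inequality.
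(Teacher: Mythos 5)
Your proof is correct and is exactly the classical McShane--Whitney argument; the paper itself does not reproduce a proof but simply cites \cite{McShane}, and your three checks (finiteness of the infimum via the triangle inequality, agreement with $f$ on $A$, and the $L(f)$-Lipschitz bound) are precisely the standard argument found there. No gaps.
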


\begin{lemma}[Arens-Eells] \label{lemma_dualbanach}
If $X$ is a metric space, then 
$\Lip_b(X)$ is a dual Banach space with respect to the norm
$$
\|f\|_{\Lip} \;:=\; \max\{ L(f), \|f\|_\infty \}.
$$
Moreover, on bounded subsets of $\Lip_b(X)$, the topology of weak-$*$ convergence agrees with that of pointwise convergence.
\end{lemma}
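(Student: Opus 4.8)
The plan is to realise $\Lip_b(X)$ concretely as a weak-$*$ closed subspace of an explicit dual Banach space, and then let abstract functional analysis do the rest. Assume $X$ has at least two points (the one-point case being trivial), let $\Delta\subset X\times X$ be the diagonal, and set
\[
W \;:=\; \ell^1(X)\;\oplus_1\;\ell^1\big((X\times X)\setminus\Delta\big),
\]
the $\ell^1$-direct sum, so that $W^*=\ell^\infty(X)\oplus_\infty\ell^\infty\big((X\times X)\setminus\Delta\big)$ with the max norm. Define $T\colon\Lip_b(X)\to W^*$ by $Tf=\big(f,\,\delta f\big)$, where $(\delta f)(x,y):=\big(f(x)-f(y)\big)/d(x,y)$ for $(x,y)\notin\Delta$. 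Directly from $\|f\|_{\Lip}=\max\{L(f),\|f\|_\infty\}$, the map $T$ is a linear isometry onto its image.

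First I would check that $T\big(\Lip_b(X)\big)$ is weak-$*$ closed in $W^*$. For each fixed $(x,y)\notin\Delta$ the functional $\Psi_{x,y}(g,h):=d(x,y)\,h(x,y)-g(x)+g(y)$ is a finite combination of coordinate evaluations on $W^*$, hence is pairing against a (nonzero) element of $W$ and is weak-$*$ continuous. A pair $(g,h)\in W^*$ lies in $\bigcap_{(x,y)\notin\Delta}\ker\Psi_{x,y}$ exactly when $h=\delta g$; for such a pair, $\|h\|_\infty<\infty$ forces $L(g)<\infty$, and $\|g\|_\infty<\infty$ gives $g\in\Lip_b(X)$ with $(g,h)=Tg$. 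Thus $T(\Lip_b(X))=\bigcap_{(x,y)\notin\Delta}\ker\Psi_{x,y}$ is an intersection of weak-$*$ closed hyperplanes, so it is weak-$*$ closed.

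Next I would invoke the standard fact that a weak-$*$ closed subspace $V$ of a dual space $W^*$ is itself a dual Banach space: one has $V=(V_\perp)^\perp$, so the natural map induces an isometric isomorphism $\big(W/V_\perp\big)^*\cong V$, where $V_\perp:=\{w\in W:\langle w,v\rangle=0\text{ for all }v\in V\}$, and the weak-$*$ topology of $V$ as this dual coincides with the topology $V$ inherits as a subset of $W^*$ equipped with its weak-$*$ topology. Pulling this structure back through the isometry $T$ exhibits $\Lip_b(X)$ as a dual Banach space under $\|\cdot\|_{\Lip}$ and identifies its weak-$*$ topology with the one transported from $W^*$. (One could instead build the Arens--Eells space as the completion of the span of the molecules $\delta_x-\delta_y$ under the norm $\inf\sum_i|a_i|\,d(x_i,y_i)$ and verify by hand that its dual is $\Lip_b(X)$; that yields a concrete predual at the cost of more bookkeeping, notably the nondegeneracy of the molecular norm.)

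For the ``moreover'' clause, let $(f_\alpha)$ be a net in $\Lip_b(X)$ bounded in $\|\cdot\|_{\Lip}$, so $(Tf_\alpha)$ is bounded in $W^*$. On a norm-bounded net, weak-$*$ convergence is equivalent to convergence of the pairings against any subset of $W$ with dense linear span; the point masses $\{(\delta_x,0):x\in X\}\cup\{(0,\delta_{(x,y)}):(x,y)\notin\Delta\}$ form such a set. Pairing $Tf_\alpha$ with $(\delta_x,0)$ returns $f_\alpha(x)$, while pairing with $(0,\delta_{(x,y)})$ returns $(\delta f_\alpha)(x,y)$, a continuous function of $\big(f_\alpha(x),f_\alpha(y)\big)$. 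Hence pointwise convergence $f_\alpha(x)\to f(x)$ for every $x$ already forces all of these pairings to converge, so $Tf_\alpha\to Tf$ weak-$*$, i.e.\ $f_\alpha\to f$ weak-$*$; the converse is immediate from pairing against $(\delta_x,0)$. So on bounded subsets the two topologies agree. The only step carrying genuine content is the existence of a predual, handled abstractly above; the rest is routine, and the one thing to keep track of is that the ``difference-quotient'' coordinates of $W^*$ are automatically controlled once a net converges pointwise --- which is precisely why pointwise convergence, rather than something finer, is the correct notion here.
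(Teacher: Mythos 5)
Your argument is correct. The paper itself does not prove this lemma; it simply cites Arens--Eells, whose classical route constructs a concrete predual (the Arens--Eells space of ``molecules,'' i.e.\ the completion of the span of $\delta_x-\delta_y$ under the optimal-decomposition norm) and verifies by hand that its dual is $\Lip_b(X)$. You instead realise $\Lip_b(X)$ isometrically as the subspace $\bigcap_{(x,y)\notin\Delta}\ker\Psi_{x,y}$ of $W^*=\ell^\infty(X)\oplus_\infty\ell^\infty((X\times X)\setminus\Delta)$, observe that this is weak-$*$ closed since each $\Psi_{x,y}$ is a pairing against an element of $W$, and invoke the standard duality $V\cong(W/V_\perp)^*$ for weak-$*$ closed subspaces; all steps check out, including the identification of the induced weak-$*$ topology with the relative one and the bounded-net argument for the ``moreover'' clause. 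What your route buys is brevity: the dual-space structure and the pointwise/weak-$*$ comparison come for free from soft functional analysis, with no bookkeeping about nondegeneracy of the molecular norm. What it does not immediately deliver is the additional property the paper uses right after this lemma (Remark \ref{rmk_net}): that the predual is \emph{separable} when $X$ is separable, which is what upgrades weak-$*$ convergence from nets to sequences. Your predual $W/V_\perp$ sits inside a huge nonseparable $W$, but separability of the quotient does follow with one extra line --- the images of the point masses $(\delta_x,0)$ for $x$ in a countable dense subset of $X$ already have dense span in $W/V_\perp$, since any $f\in\Lip_b(X)$ annihilating them vanishes on a dense set and hence everywhere. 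It would be worth recording that observation if this proof is to support the sequential characterisation used later in the paper.
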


\begin{rmk} \label{rmk_net}
Since metric spaces $X$ are assumed separable, the weak-$*$ topology in $\Lip_b(X)$ can be characterized in terms of sequences as opposed to nets.  The {\em Arens-Eells space}, a pre-dual of $\Lip_b(X)$, is therefore a separable Banach space whenever $X$ is separable \cite[Sect 2.2]{WeaverLA}.

So in this context, a sequence $\{f_m\}$ converges weak-$*$ to $f$ in $\Lip_b(X)$, denoted $f_m \wsto f$, if and only if both $\{ f_m \}$ converges pointwise to $f$ and $\sup_m L(f_m) < \infty$.
\end{rmk}

Recall that $\Lip_b(X)$ is an algebra: if $f, g \in \Lip_b(X)$, then $f \cdot g \in \Lip_b(X)$ and
$$
L(f \cdot g) \;\leq\; L(g) \cdot \|f\|_\infty \,+\, L(f) \cdot \|g\|_\infty \;<\; \infty.
$$

\begin{defn} \label{defn_fingen}
For $N \in \N \cup \{\infty\}$, a subset ${\bf g} = (g_i)_{i=1}^N$ of $\Lip(X)$ is said to {\em generate} $\Lip_b(X)$ if on every ball $B$ in $X$ and
for every $f \in \Lip_b(X)$, there exist polynomials $\{p_n\}_{n=1}^\infty \in P_n \cap \Lip_b(g(B))$ so that 
$$
p_n \circ g \;\wsto\; f \, \text{ in } \, \Lip_b(X).
$$
Call $\Lip_b(X)$ {\em $N$-generated}, for $N \in \N$, if there exists an $N$-tuple in $[\Lip(X)]^N$ that generates $\Lip_b(X)$ and if no $(N-1)$-tuple generates $\Lip_b(X)$.  Lastly, $\Lip_b(X)$ is {\em finitely generated} if it is $N$-generated for some $N \in \N$.
\end{defn}

As an example, $\R^n$ is $n$-generated.  Indeed, it is well-known that polynomials are dense in $C^\infty(\R^n)$ with respect to the $C^1$-topology and that smooth functions are norm-dense in $\Lip_b(\R^n)$.  %See, for example, \cite[Thm II.4.3]{Courant:Hilbert}.

\subsection{Derivations and Locality}

This discussion is adapted from \cite{WeaverED}, which handles the general case of measurable metrics. For (pointwise) metrics in the usual sense, see \cite{HeinonenNC}, \cite{Gong}, and the recent work \cite{Schioppa}.

\begin{defn} \label{defn_deriv}
A bounded linear operator $\d : \Lip_b(X) \to L^\infty(X;\mu)$ is called a {\em derivation} if it satisfies the following two conditions:
\begin{enumerate}
\item the Leibniz rule: $\d(f \cdot g) = f \cdot \d g \,+\, g \cdot \d f$.
\item weak continuity: if $f_k \wsto f$ in $\Lip_b(X)$, then $\d f_k \wsto \d f$ in $L^\infty(X;\mu)$.
\end{enumerate}
The set of derivations on $(X,d,\mu)$ is denoted by $\U(X,\mu)$.
\end{defn}

As examples, the differential operators $\{\frac{\partial}{\partial x_i} \}_{i=1}^n$ are derivations on $\R^n$ with respect to the usual metric and the Lebesgue measure; so are vector fields on a compact Riemannian manifold with respect to the volume element \cite[Thm 37]{WeaverED}.  On the other hand, measures that are supported on finite sets of points do not support nonzero derivations \cite[Prop 32]{WeaverED}.

Observe that derivations allow scaling by $L^\infty$ functions; if $\d \in \U(X,\mu)$ then each $\lambda \in L^\infty(X;\mu)$ determines a derivation $\lambda \d \in \U(X,\mu)$ under the rule
$$
(\lambda \d)f(x) \;:=\; \lambda(x) \, \d f(x).
$$
Returning to the analogy of differential operators on $\R^n$, derivations therefore enjoy a locality property \cite[Lem 27]{WeaverED}. As a consequence, they also allow a well-defined action on unbounded Lipschitz functions \cite[Thm 2.15]{Gong_rigid}.

\begin{lemma}[Weaver] \label{lemma_locality}
Let $A \subset X$ with $\mu(A) > 0$.  Then as sets,
$$
\U(A,\mu) \;=\; \{ \chi_A\d \,:\, \d \in \U(X,\mu) \}.
%\chi_A \cdot \U(X,\mu).
$$
\end{lemma}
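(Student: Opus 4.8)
The plan is to prove the two inclusions separately, using the fact that a derivation on $A$ should be thought of as a derivation on all of $X$ that "lives" on $A$. For the easy inclusion $\supseteq$, I would start with $\d \in \U(X,\mu)$ and check that $\chi_A \d$ restricts to an element of $\U(A,\mu)$. Here one must be slightly careful about what $\chi_A \d$ means as an operator on $\Lip_b(A)$ rather than $\Lip_b(X)$: given $f \in \Lip_b(A)$, extend it (by Lemma \ref{lemma_mcshane}) to some $\tilde f \in \Lip_b(X)$ with $L(\tilde f) = L(f)$, and set $(\chi_A\d)(f) := \chi_A \cdot \d(\tilde f)$, viewed as an element of $L^\infty(A;\mu)$. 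The Leibniz rule and linearity are immediate from those of $\d$ together with $\chi_A^2 = \chi_A$; boundedness follows from boundedness of $\d$; and weak-$*$ continuity follows from the weak-$*$ continuity of $\d$ on $X$ together with the observation (via McShane--Whitney and Remark \ref{rmk_net}) that a weak-$*$ convergent sequence in $\Lip_b(A)$ can be extended to a uniformly Lipschitz, pointwise-convergent, hence weak-$*$ convergent sequence in $\Lip_b(X)$. The one genuine check in this direction is \emph{well-definedness}: the restriction $\chi_A \cdot \d(\tilde f)$ must not depend on the choice of extension $\tilde f$. This is exactly the locality property of derivations \cite[Lem 27]{WeaverED}: if $\tilde f_1 = \tilde f_2$ $\mu$-a.e.\ on $A$, then $\d \tilde f_1 = \d \tilde f_2$ $\mu$-a.e.\ on $A$, so $\chi_A \d \tilde f_1 = \chi_A \d \tilde f_2$.

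For the harder inclusion $\subseteq$, I would take $\d' \in \U(A,\mu)$ and produce a derivation $\d \in \U(X,\mu)$ with $\chi_A \d = \d'$. The natural candidate is: for $f \in \Lip_b(X)$, define $\d f := \d'(f|_A)$, extended by zero off $A$; that is, $\d f := \chi_A \cdot \d'(f|_A) \in L^\infty(X;\mu)$. Linearity, the Leibniz rule, and boundedness transfer directly from $\d'$, since restriction $f \mapsto f|_A$ is an algebra homomorphism $\Lip_b(X) \to \Lip_b(A)$ that is bounded for the $\|\cdot\|_{\Lip}$-norms. Weak-$*$ continuity is the point that requires Remark \ref{rmk_net}: if $f_k \wsto f$ in $\Lip_b(X)$, then $\sup_k L(f_k) < \infty$ and $f_k \to f$ pointwise on $X$, hence $f_k|_A \to f|_A$ pointwise on $A$ with $\sup_k L(f_k|_A) < \infty$, so $f_k|_A \wsto f|_A$ in $\Lip_b(A)$; then $\d'(f_k|_A) \wsto \d'(f|_A)$ in $L^\infty(A;\mu)$ by hypothesis on $\d'$, and multiplying by $\chi_A$ (which is weak-$*$ continuous as multiplication by a fixed $L^\infty$ function) gives $\d f_k \wsto \d f$ in $L^\infty(X;\mu)$. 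Finally $\chi_A \d = \d'$ by construction, since for $f \in \Lip_b(A)$ one may take its McShane extension $\tilde f$ and note $\d' \tilde f|_A = \d'(f)$ by locality on $A$ again. Thus $\d' \in \{\chi_A \d : \d \in \U(X,\mu)\}$.

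The main obstacle, and really the only subtlety, is the locality property underpinning both directions: in the $\supseteq$ direction it is needed to see that $\chi_A \d$ genuinely depends only on the restriction $f|_A$ (so that it defines an operator on $\Lip_b(A)$ at all), and in the $\subseteq$ direction it is needed to verify $\chi_A \d = \d'$ rather than merely $\chi_A \d \subseteq \d'$ in some weaker sense. Everything else is a routine transfer of the defining properties along the restriction and extension maps, using McShane--Whitney (Lemma \ref{lemma_mcshane}) to move between $\Lip_b(X)$ and $\Lip_b(A)$ and the sequential characterization of weak-$*$ convergence (Remark \ref{rmk_net}) to check continuity without invoking nets. I would present the argument as two short paragraphs, one per inclusion, citing \cite[Lem 27]{WeaverED} for locality and Lemmas \ref{lemma_mcshane} and \ref{lemma_dualbanach} for the extension and duality facts.
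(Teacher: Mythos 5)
The paper does not actually prove this lemma: it is attributed to Weaver and quoted from \cite[Lem 27]{WeaverED}, so there is no internal proof to compare against. Your two-inclusion argument is the standard one and is correct in substance. It is worth being explicit, though, that essentially all of the mathematical content sits in the locality property you cite --- namely that $f|_A = g|_A$ implies $\d f = \d g$ $\mu$-a.e.\ on $A$ --- which is itself a nontrivial consequence of the Leibniz rule together with weak-$*$ continuity (via truncation arguments), not a formal consequence of the definitions. Citing it is legitimate here, since the paper does exactly the same, but a self-contained proof would have to establish it.

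One step deserves more care than you give it: the weak-$*$ continuity of $\chi_A\d$ as an operator on $\Lip_b(A)$ in the $\supseteq$ direction. If $f_k \wsto f$ in $\Lip_b(A)$, the McShane extensions taken with the individual constants $L(f_k)$ need not converge pointwise on $X\setminus A$, and for unbounded $X$ they need not even be bounded. The fix is routine --- extend every $f_k$ with the common constant $L := \sup_k L(f_k)$, truncate to $[\inf_A f_k,\sup_A f_k]$ to land in $\Lip_b(X)$, and use that pointwise convergence of a uniformly Lipschitz sequence is locally uniform so that the infima defining the extensions converge --- but as written the claim that the extended sequence is ``pointwise-convergent, hence weak-$*$ convergent in $\Lip_b(X)$'' is asserted rather than checked. (Conversely, your appeal to ``locality on $A$'' when verifying $\chi_A\d = \d'$ in the $\subseteq$ direction is unnecessary: $\tilde f|_A = f$ holds by construction, so $\d'(\tilde f|_A) = \d'f$ is a tautology.) With that one step patched, the proof stands.
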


\begin{lemma} \label{lemma_extlip}
Each $\d \in \U(X,\mu)$ extends to a linear operator
$$
\bar\d : \Lip_{\rm loc}(X) \to L^\infty_{\rm loc}(X,\mu).
$$
Moreover, it is bounded on each compact subset $K$ of $X$ under the seminorm
$$
f \mapsto L(f|_K).
$$
\end{lemma}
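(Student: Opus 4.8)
Proof plan. The plan is to define $\bar\d f$ near each point of $X$ by applying $\d$ to a bounded Lipschitz function that agrees with $f$ there, and then to glue these local definitions using the fact that derivations are local. The first ingredient is locality: if $h \in \Lip_b(X)$ vanishes on an open set $U$, then $\d h = 0$ $\mu$-a.e.\ on $U$. This is Weaver's locality lemma \cite[Lem 27]{WeaverED}; alternatively, for $x \in U$ pick $r>0$ with $\bar B(x,r) \subset U$ and a Lipschitz cutoff $\phi \in \Lip_b(X)$ with $\phi \equiv 1$ on $B(x,r/2)$ and $\operatorname{supp}\phi \subset \bar B(x,r)$ (e.g.\ $\phi = \max\{0,\min\{1,\,2 - 2 d(x,\cdot)/r\}\}$); then $\phi h \equiv 0$, so the Leibniz rule gives $0 = \d(\phi h) = \phi\,\d h + h\,\d\phi$, which on $B(x,r/2)$ reads $\d h = 0$, and since $X$ is separable, $U$ is a countable union of such balls. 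By linearity, $h_1 = h_2$ on an open set $U$ implies $\d h_1 = \d h_2$ $\mu$-a.e.\ on $U$. I also record that $\d$ kills constants: $\d 1 = \d(1\cdot 1) = 2\,\d 1$, so $\d 1 = 0$, hence $\d(\,\cdot\, + c) = \d(\,\cdot\,)$ for every constant $c$.

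Given $f \in \Lip_{\rm loc}(X)$, I cover $X$ by a countable family of balls $B_i := B(x_i, r_i)$ such that $f$ is Lipschitz, hence also bounded, on $\bar B(x_i, r_i)$ (possible by local Lipschitzness and separability/Lindel\"of). Using Lemma \ref{lemma_mcshane} and truncation, let $h_i \in \Lip_b(X)$ be an extension of $f|_{\bar B(x_i,r_i)}$, and define $\bar\d f := \d h_i$ on $B_i$. On an overlap $B_i \cap B_j$ one has $h_i = f = h_j$, so by locality $\d h_i = \d h_j$ $\mu$-a.e.\ there; hence $\bar\d f$ is a well-defined element of $L^\infty_{\rm loc}(X,\mu)$, independent of the cover and the chosen extensions. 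Linearity is immediate (for $f,g$ use a common refinement of their covers, on which $h_i^f + h_i^g$ represents $f+g$), and if $f \in \Lip_b(X)$ then $f$ itself is an admissible representative on every ball, so $\bar\d f = \d f$; thus $\bar\d$ extends $\d$.

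For the boundedness statement, fix a compact $K$ and, using compactness, cover it by finitely many balls $B(x_i,\rho_i)$, $i = 1,\dots,m$, such that $f$ is bounded and Lipschitz on each $\bar B(x_i,2\rho_i)$; write $L_i := L(f|_{\bar B(x_i,2\rho_i)})$. Let $\tilde f_i$ be a global $L_i$-Lipschitz extension of $f|_{\bar B(x_i,2\rho_i)}$ (Lemma \ref{lemma_mcshane}), let $\phi_i \in \Lip_b(X)$ be a cutoff with $\phi_i \equiv 1$ on $B(x_i,\rho_i)$, $\operatorname{supp}\phi_i \subset \bar B(x_i,2\rho_i)$ and $L(\phi_i) \le 1/\rho_i$, and set $g_i := \phi_i \cdot (\tilde f_i - f(x_i))$. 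Since $|\tilde f_i - f(x_i)| \le 2\rho_i L_i$ on $\operatorname{supp}\phi_i$, a short computation gives $g_i \in \Lip_b(X)$ with $\|g_i\|_\infty \le 2\rho_i L_i$ and $L(g_i) \le 3L_i$, hence $\|g_i\|_{\Lip} \le c_i L_i$ with $c_i$ depending only on $\rho_i$. As $g_i = f - f(x_i)$ on $B(x_i,\rho_i)$, locality together with the vanishing of $\d$ on constants gives $\bar\d f = \d g_i$ $\mu$-a.e.\ on $B(x_i,\rho_i)$, so $\|\bar\d f\|_{L^\infty(B(x_i,\rho_i),\mu)} \le \|\d\|_{\rm op}\,\|g_i\|_{\Lip} \le C_i L_i$ with $C_i = C_i(\|\d\|_{\rm op},\rho_i)$. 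Taking the maximum over $i$ yields $\|\bar\d f\|_{L^\infty(K,\mu)} \le C_K\, L(f|_K)$, with $C_K$ absorbing the finitely many enlargements $\bar B(x_i,\rho_i) \subset \bar B(x_i,2\rho_i)$.

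I expect the only genuine content to be the two points just highlighted: the locality statement of the first paragraph, in a form robust enough to patch local representatives over a countable cover, and the cutoff estimate of the last paragraph, whose point is that $\|\d g\|_\infty$ is controlled by $L(g)$ alone — not by $\|g\|_\infty$ — precisely because $\d$ annihilates constants; this is what makes $\bar\d$ continuous for the seminorm $f \mapsto L(f|_K)$. Everything else (well-definedness, linearity, the extension property, and measurability into $L^\infty_{\rm loc}$) is routine bookkeeping once the covers are taken countable, using separability.
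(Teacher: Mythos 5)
Your construction of $\bar\d$ is correct and is essentially the standard argument (the paper itself gives no proof here, deferring to \cite[Thm 2.15]{Gong_rigid}, which proceeds the same way: patch together $\d$ applied to bounded Lipschitz representatives of $f$ on a countable cover, using locality on overlaps). The open-set locality you derive from the Leibniz rule with a cutoff is sound, $\d 1=0$ is right, and the well-definedness, linearity, and extension properties all go through.

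The gap is in the final seminorm bound. Your argument controls $\|\bar\d f\|_{L^\infty(B(x_i,\rho_i),\mu)}$ by $L_i = L(f|_{\bar B(x_i,2\rho_i)})$, the Lipschitz constant of $f$ on an \emph{enlarged} ball, and you then assert that "$C_K$ absorbs the finitely many enlargements.'' That step is false: $L(f|_{\bar B(x_i,2\rho_i)})$ is not dominated by any constant multiple of $L(f|_K)$, since $f$ can be constant on $K$ while oscillating arbitrarily fast on $\bar B(x_i,2\rho_i)\setminus K$ (the extreme case $L(f|_K)=0$ with $\bar\d f\neq 0$ would have to be ruled out, and open-set locality cannot do this when no open neighborhood of a point of $K$ meets $K$ in a set where $f$ is constant). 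What you actually prove is boundedness under the seminorm $f\mapsto \max_i L(f|_{\bar B(x_i,2\rho_i)})$, i.e.\ the Lipschitz constant on a neighborhood of $K$ --- a weaker statement than the one claimed. To get the bound literally in terms of $L(f|_K)$ you need the \emph{strong} locality of derivations (Weaver's \cite[Lem 27]{WeaverED}, the source of Lemma \ref{lemma_locality}): if $g=h$ on a Borel set $A$, then $\d g=\d h$ $\mu$-a.e.\ on $A$, not merely on open sets of agreement. With that in hand the proof is one line: let $h$ be the McShane extension (Lemma \ref{lemma_mcshane}) of $(f-f(x_0))|_K$ for some $x_0\in K$, truncated to $[\inf_K f - f(x_0),\, \sup_K f - f(x_0)]$; then $h\in\Lip_b(X)$ with $\|h\|_{\Lip}\le \max\{1,\diam K\}\,L(f|_K)$, and strong locality plus $\d 1=0$ give $\bar\d f=\d h$ $\mu$-a.e.\ on $K$, whence $\|\bar\d f\|_{L^\infty(K,\mu)}\le \|\d\|\,\max\{1,\diam K\}\,L(f|_K)$. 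Either upgrade your locality lemma to the Borel-set version or cite it; as written, the last paragraph does not establish the stated estimate.
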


In light of the above discussion, we henceforth make no distinction between a derivation (as in Definition \ref{defn_deriv}) and its extension to $\Lip_{\rm loc}(X)$ (as in Lemma \ref{lemma_extlip}).

\subsection{Linear independence \& Rank}

We now consider more subtle consequences of the scalar action $L^\infty(X;\mu)$ on $\U(X,\mu)$.

\begin{defn}
A set $\{\d_i\}_{i=1}^m$ in $\U(X,\mu)$ is called {\em linearly independent} if every $m$-tuple of functions $\{\lambda_i\}_{i=1}^m$ in $L^\infty(X;\mu)$ satisfies the implication
$$
\big[ \lambda_1 \d_1 \,+\, \ldots \,+\, \lambda_m\d_m \,=\, 0 \big]
\; \; \Longrightarrow \; \;
\big[ \lambda_1 \,=\, \ldots \,=\, \lambda_m \,=\, 0 \big].
$$
Otherwise, call $\{\d_i\}_{i=1}^m$ a {\em linearly dependent} set.

Moreover, {\em $\U(X,\mu)$ has rank-$m$} if it contains a linearly independent set of $m$ derivations and if every set of $m+1$ derivations is linearly dependent.  Lastly, call $\{\d_i\}_{i=1}^m$ a {\em basis} of $\U(X,\mu)$ if it is linearly independent and if $\U(X,\mu)$ has rank-$m$.
\end{defn}

The linear algebra of derivations will be used extensively in later sections.  The basic idea is to use generating functions for $\Lip_b(X)$ as coordinates for $X$.  By forming a Jacobi-type matrix whose entries consist of derivations acting on these functions, we construct differentials using a ``change of variables'' argument.

We begin with a few lemmas.  The first three generalise the orthogonal relations 
$$
\frac{\partial}{\partial x_i}[x_j] \;=\; 
\left\{
\begin{array}{rl}
0,& i \neq j \\
1,& i = j
\end{array}
\right.
$$
where $\{x_j\}$ are the usual coordinate functions on $\R^n$.

\begin{lemma} \label{lemma_nonsing}
Let $n \in \N$.  If ${\bf g} := (g_i)_{i=1}^n$ is a generating set for $\Lip_b(X)$ and if ${\bf d} := (\d_i)_{i=1}^n$ is a linearly independent set in $\U(X,\mu)$, then 
$$
\mathbf{d g}(x) \;:=\; [\d_ig_j(x)]_{i,j=1}^n
$$
is a non-singular matrix for $\mu$-a.e. $x \in X$.
\end{lemma}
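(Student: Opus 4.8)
The plan is to argue by contradiction: suppose the set $E := \{x \in X : \det \mathbf{d g}(x) = 0\}$ has positive $\mu$-measure, and manufacture from it a nontrivial $L^\infty$-linear combination $\sum_i \lambda_i \d_i$ that annihilates all of $\Lip_b(X)$, contradicting the assumed linear independence of $(\d_i)_{i=1}^n$. The conceptual content is that singularity of the Jacobi-type matrix $[\d_i g_j]$ is precisely a pointwise linear dependence among its rows $A_i(x) := (\d_i g_1(x), \ldots, \d_i g_n(x)) \in \R^n$, and the work is to upgrade this to an honest linear dependence among the derivations themselves.

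First I would carry out a measurable selection of coefficients. On $E$ the vectors $A_1(x), \ldots, A_n(x)$ are linearly dependent, so I stratify $E$ by the rank $r = r(x) < n$ of $\mathbf{d g}(x)$, then by which $r$ of the rows are linearly independent and which of their $r \times r$ submatrices is invertible, and finally truncate so that the corresponding subdeterminant is bounded away from $0$; picking a resulting piece $F \subseteq E$ of positive measure and using Cramer's rule on the chosen submatrix, I obtain an index $i_0$ and functions $\lambda_i \in L^\infty(X;\mu)$ supported on $F$ with $\lambda_{i_0} = \chi_F$ and $\sum_{i=1}^n \lambda_i \, \d_i g_j = 0$ $\mu$-a.e.\ on $X$ for every $j = 1, \ldots, n$. (If the rank is $0$ on a positive-measure set, this step is immediate.) Since $\U(X,\mu)$ is closed under addition and $L^\infty$-scaling, $\d := \sum_{i=1}^n \lambda_i \d_i$ is again a derivation, and by construction $\d g_j = 0$ for each $j$.

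The heart of the argument is then to upgrade ``$\d$ kills the generators'' to ``$\d$ kills everything''. Iterating the Leibniz rule gives, by induction on degree, the chain rule on polynomials, $\d(p \circ g) = \sum_{j=1}^n (\partial_j p)(g) \cdot \d g_j$ for all $p \in P_n$ — here $p \circ g$ is read via the extension of $\d$ to $\Lip_{\rm loc}(X)$ from Lemma \ref{lemma_extlip}, since it need not be bounded — whence $\d(p \circ g) = 0$. Now fix $f \in \Lip_b(X)$ and a ball $B$; the generating hypothesis furnishes polynomials $p_m \in P_n$ with $p_m \circ g \wsto f$, and weak continuity of $\d$ then gives $\d(p_m \circ g) \wsto \d f$ in $L^\infty$, so $\d f = 0$ on $B$ because every $\d(p_m \circ g)$ vanishes. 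Covering $X$ by countably many balls (using separability) and patching via the locality of derivations (Lemma \ref{lemma_locality}), I conclude $\d f = 0$ $\mu$-a.e.\ on $X$. Hence $\d = \sum_i \lambda_i \d_i = 0$ although $\lambda_{i_0} = \chi_F \neq 0$ on a set of positive measure, contradicting linear independence; therefore $\mu(E) = 0$.

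I expect the main obstacle to be bookkeeping rather than ideas: one must ensure the coefficients produced by the selection step genuinely lie in $L^\infty$ and are not $\mu$-a.e.\ zero (which is exactly what the rank-stratification and truncation are for), and one must reconcile the ball-by-ball form of the generating hypothesis — together with the possible unboundedness of $p_m \circ g$ on all of $X$ — with the global conclusion $\d f = 0$, which is where the locality of derivations is essential. Once these technicalities are set up, the transfer from ``singular Jacobi matrix'' to ``linearly dependent derivations'' is driven purely by the Leibniz rule and weak continuity along the generating polynomials.
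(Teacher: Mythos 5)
Your proposal is correct and is essentially the paper's argument: both extract from the singularity of $[\d_i g_j]$ a nontrivial $L^\infty$-linear combination of the $\d_i$ (the paper via a minimal nonsingular cofactor and Laplace expansion, you via rank stratification and Cramer's rule --- the same cofactor coefficients) that annihilates the generators, and then contradict linear independence. You are in fact more explicit than the paper about the final upgrade from ``$\d g_j = 0$ for all $j$'' to ``$\d = 0$'' via the Leibniz rule, weak continuity, and the generating hypothesis, a step the paper states only tersely here but spells out in the proof of Lemma \ref{lemma_fingenbd}.
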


Suggestively, ${\bf dg}$ is called the {\em Jacobi matrix of ${\bf g}$ (with respect to ${\bf d}$)} and its determinant $\det({\bf dg})$ is called the {\em Jacobian (determinant)} of ${\bf g}$.  As a clarification, we follow the usual Jacobi matrix convention on Euclidean spaces, so $i$ is the column index and $j$ is the row index.

\begin{proof}[Proof of Lemma \ref{lemma_nonsing}]
Since ${\bf d}$ is linearly independent and ${\bf g}$ generates $\Lip_b(X)$, not all of the entries of ${\bf dg}(x)$ can be zero. Towards a contradiction, let $k \in (1,n]$ be the least integer with the following properties:
\begin{enumerate}
\item there is a $k \times k$ cofactor matrix $A(x)$, obtained from omitting $n-k$ rows and $n-k$ columns from ${\bf dg}(x)$, so that the set
$$
Y \,:=\, \big\{ x \in X \,:\, \det A(x) \,=\, 0 \big\}
$$
has positive $\mu$-measure;
\vspace{.05in}
\item there is a $(k-1) \times (k-1)$ cofactor matrix $A'$, obtained from omitting one row and one column from $A$, so that $\det(A')|_Y \neq 0$.
\end{enumerate}
In particular, $1 \times 1$ cofactors are precisely the entries $\d_ig_j$, so necessarily $k \geq 2$.

Up to re-indexing, let $A := [\d_ig_j(x)]_{i,j=1}^k$.  Writing $A_j$ for the cofactor of $A$ with the first row and $j$th column of $A$ omitted, suppose that 
$A' := A_k$.  Then
\begin{equation} \label{eq_cofactorderiv}
\d \;=\; \sum_{i=1}^k \big( \chi_Y (-1)^{j+1} \det A_j \big) \d_i
\end{equation}
is zero; verily, the Laplace expansion formula for matrices implies that $\d g_j$ is either $\det(A)$ or the determinant of another $k\times k$ cofactor matrix with a repeated row.

Since $\det A' \neq 0$ on $Y$, the derivations $\{\chi_Y\d_i\}_{i=1}^k$ must be linearly dependent, which implies that $\{\d_i\}_{i=1}^k$ and hence ${\bf d}$ are also linearly dependent.
\end{proof}

The non-singular Jacobian condition also holds for when the number of generators for $\Lip_b(X)$ exceeds the rank of $\U(X,\mu)$.

\begin{cor} \label{cor_nonsing}
Let $m,n \in \N$ with $m \leq n$.  If ${\bf g} = (g_i)_{i=1}^n$ generates $\Lip_b(X)$ and if ${\bf d} := \{\d_i\}_{i=1}^m$ is linearly independent in $\U(X,\mu)$, then for $\mu$-a.e.\ $x \in X$ there is a subset ${\bf f} := (f_j)_{j=1}^m$ of ${\bf g}$ so that 
${\bf df}(x) := [\d_if_j(x)]_{i,j=1}^m$
is a non-singular matrix.
\end{cor}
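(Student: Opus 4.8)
The plan is to argue by contradiction, along the same lines as Lemma~\ref{lemma_nonsing}, but now allowing the size-$m$ minors of the (rectangular) Jacobi matrix to be chosen from among the $n$ available rows. First I would note that the assertion is equivalent to the statement that the $n\times m$ matrix ${\bf dg}(x) := [\d_i g_j(x)]$ --- with one column per derivation $\d_i$ and one row per generator $g_j$ --- has full column rank $m$ for $\mu$-a.e.\ $x \in X$, since an $n\times m$ matrix with $n\ge m$ has rank $m$ precisely when some choice of $m$ of its rows gives an invertible $m\times m$ submatrix. So I would suppose, towards a contradiction, that the set
$$
Y \;:=\; \Big\{\, x \in X \;:\; \det\big[\d_i f_j(x)\big]_{i,j=1}^{m} = 0 \ \text{ for every } m\text{-tuple } {\bf f}=(f_j)_{j=1}^m \text{ selected from } {\bf g} \,\Big\}
$$
has $\mu(Y)>0$, and produce from this a nontrivial $L^\infty$-linear relation among the $\d_i$.

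The first real step is to localize the rank drop. Among the finitely many square submatrices of ${\bf dg}$, of all sizes $k \in \{0,1,\dots,m\}$, I would let $k$ be the largest integer for which some $k\times k$ submatrix $A$ has $\det A \neq 0$ on a subset of $Y$ of positive $\mu$-measure; this is well defined (the size-$0$ submatrix always qualifies) and, since every size-$m$ submatrix is singular on $Y$ by the choice of $Y$, one has $0 \le k \le m-1$. After reindexing ${\bf g}$ and ${\bf d}$ I may take $A(x) = [\d_i g_j(x)]_{i,j=1}^{k}$ and set $Z := \{x\in Y : \det A(x)\neq 0\}$, so that $\mu(Z)>0$ while, by maximality of $k$, \emph{every} $(k+1)\times(k+1)$ submatrix of ${\bf dg}$ vanishes $\mu$-a.e.\ on $Y$. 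For each $j \in \{1,\dots,n\}$ let $B_j$ be the $(k+1)\times(k+1)$ matrix with columns $\d_1,\dots,\d_{k+1}$ and rows $g_1,\dots,g_k,g_j$; this is a genuine submatrix of ${\bf dg}$ when $j>k$ and has a repeated row otherwise, so in all cases $\chi_Z \det B_j = 0$ $\mu$-a.e. Expanding $\det B_j$ by cofactors along its last row yields an identity $\det B_j = \sum_{i=1}^{k+1} c_i\, \d_i g_j$ in $L^\infty(X;\mu)$, in which each $c_i$ is a signed $k\times k$ minor of ${\bf dg}$ that is \emph{independent of $j$}, with $c_{k+1} = \det A$. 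Setting $\lambda_i := \chi_Z\, c_i$ for $1 \le i \le k+1$ and $\lambda_i := 0$ for $k+1 < i \le m$, the derivation $\d := \sum_{i=1}^m \lambda_i \d_i \in \U(X,\mu)$ then satisfies $\d g_j = \chi_Z \det B_j = 0$ $\mu$-a.e.\ for every generator $g_j$.

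To finish, I would upgrade ``$\d g_j = 0$ for all $j$'' to ``$\d = 0$'' using that ${\bf g}$ generates $\Lip_b(X)$: for $f \in \Lip_b(X)$ and a ball $B$, write $f$ as a weak-$*$ limit of polynomials $p_r \circ {\bf g}$ on $B$, apply the Leibniz rule to get $\d(p_r\circ{\bf g}) = \sum_j (\partial_j p_r)({\bf g})\,\d g_j = 0$, let $r\to\infty$ using weak continuity, and glue over a countable cover of $X$ by balls using locality (Lemmas~\ref{lemma_locality} and~\ref{lemma_extlip}). Since $\lambda_{k+1} = \chi_Z\det A$ is nonzero on the positive-measure set $Z$, the relation $\sum_{i=1}^m \lambda_i\d_i = 0$ exhibits $\{\d_i\}_{i=1}^m$ as linearly dependent, contradicting the hypothesis on ${\bf d}$; hence $\mu(Y) = 0$, which is the assertion. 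I expect the only step that is genuinely more than bookkeeping to be the ``chain rule'' identity $\d(p_r\circ{\bf g}) = \sum_j (\partial_j p_r)({\bf g})\,\d g_j$ in the case where $p_r\circ{\bf g}$ is only locally bounded Lipschitz; as in Lemma~\ref{lemma_nonsing} this is dealt with by passing to the extension of $\d$ to $\Lip_{\mathrm{loc}}(X)$ and invoking locality. Everything else is the same $L^\infty$-linear algebra as in Lemma~\ref{lemma_nonsing}, together with the purely combinatorial choice of which $k$ rows of ${\bf dg}$ yield a nonsingular minor.
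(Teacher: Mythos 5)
Your proof is correct and follows essentially the same route the paper intends: arguing in the contrapositive, building a derivation from signed cofactor minors as in Equation \eqref{eq_cofactorderiv} so that it annihilates every generator, and then invoking the generating property together with weak continuity and locality to conclude the derivation vanishes, contradicting linear independence. The only difference is organizational --- you select a maximal-size nonvanishing minor directly rather than running the induction on $m$ that the paper sketches --- and your treatment of the rows $g_j$ with $j>k$ via the genuine $(k+1)\times(k+1)$ submatrices is exactly the right way to make the cofactor argument work in the rectangular case.
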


The proof is a straightforward induction on $m$; for the induction step, one argues in the contrapositive by using Equation \eqref{eq_cofactorderiv}.  In fact, the same argument works even when $\Lip_b(X)$ is generated by {\em countably} many Lipschitz functions.
%(We omit the details.)

As another consequence, we obtain a type of Gram-Schmidt orthogonalization for bases of derivations.

\begin{lemma}  \label{lemma_ON} 
Let $n \in \N \cup \{\infty\}$.  If ${\bf g} := (g_i)_{i=1}^n$ is a generating set for $\Lip_b(X)$ and if $\U(X,\mu)$ has rank-$m$, for some finite $m \in (0, n]$, then there exist
\begin{itemize}
\item a basis $\{\d_i^*\}_{i=1}^m$ of $\U(X,\mu)$,
\item a partition of $X$ by $\mu$-measurable sets $\{X_l\}_{l=1}^L$, with $L \leq \binom{n}{m}$  when $n < \infty$,
\end{itemize}
where for every $1 \leq l \leq L$, there is a subset ${\bf f}^l := (f^l_j)_{j=1}^m$ of ${\bf g}$ so that:
\begin{enumerate}
\item if $i \neq j$, then $\d_i^*f_j^l \;=\; 0$ holds $\mu$-a.e.\ on $X_l$;
\item the set $\{ x \in X_l \,:\, \d_1^*f_1^l(x) = 0\}$ has zero $\mu$-measure;
\item $\d_1^*f_1^l = \ldots = \d_n^*f_n^l \neq 0$ holds $\mu$-a.e.\ on $X_l$.
\end{enumerate}
\end{lemma}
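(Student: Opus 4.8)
The plan is to build the orthogonalized basis $\{\d_i^*\}$ by an inductive Gram–Schmidt process that runs on the Jacobi matrices supplied by Lemma~\ref{lemma_nonsing} and Corollary~\ref{cor_nonsing}. First I would fix a basis $\{\d_i\}_{i=1}^m$ of $\U(X,\mu)$ (which exists since the rank is $m$). Applying Corollary~\ref{cor_nonsing} to ${\bf g}$ and $\{\d_i\}_{i=1}^m$, for $\mu$-a.e.\ $x \in X$ there is an $m$-element subset ${\bf f} \subset {\bf g}$ with ${\bf df}(x)$ nonsingular. When $n < \infty$ there are only $\binom{n}{m}$ such subsets, so I partition $X$ into measurable pieces $\{X_l\}_{l=1}^L$, $L \leq \binom{n}{m}$, on each of which a single fixed ${\bf f}^l = (f^l_j)_{j=1}^m$ works, i.e.\ the matrix $M^l(x) := [\d_i f^l_j(x)]_{i,j=1}^m$ is nonsingular $\mu$-a.e.\ on $X_l$; for $n = \infty$ one still gets a countable partition by the same argument (the case $L \le \binom nm$ is only claimed for finite $n$). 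The rest of the work is carried out separately on each $X_l$, so I will suppress the index $l$ and write $M(x)$, $f_j := f^l_j$.

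The heart of the matter is now purely linear-algebraic but with $L^\infty$-coefficients. On $X_l$ the matrix $M(x)$ is invertible with $\det M \neq 0$ $\mu$-a.e.; by Cramer's rule its inverse has entries $M^{-1}(x) = (m_{ij}(x))$ with $m_{ij} = (\text{cofactor})/\det M$, hence each $m_{ij}$ lies in $L^\infty_{\rm loc}$. Using the $L^\infty$-module structure of $\U(X,\mu)$ (the scalar action recalled before Lemma~\ref{lemma_locality}), I would first pass to the normalized derivations $\tilde\d_i := \sum_k m_{ik}\,\d_k$, which satisfy $\tilde\d_i f_j = \delta_{ij}$ $\mu$-a.e.\ on $X_l$; cofactors of $M$ and $\det M$ may be unbounded, but one can rescale again by $\chi_{X_l}\,(\det M)/(1+|\det M|)$ or, more cleanly, partition $X_l$ further into countably many sets on which $\det M$ is bounded away from $0$ and $\infty$ — this keeps everything in $L^\infty(X;\mu)$ and does not disturb linear independence, by Lemma~\ref{lemma_locality}. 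At this stage $\{\tilde\d_i\}_{i=1}^m$ is already a linearly independent set — indeed a basis of $\U(X_l,\mu)$ — with $\tilde\d_i f_j = \delta_{ij}$, which is conclusions (1) and (2). To obtain conclusion (3), note that (3) as literally typeset (``$\d_1^*f_1^l = \cdots = \d_n^* f_n^l$'') should read $\d_1^* f_1^l = \cdots = \d_m^* f_m^l$, and this is automatic: since $\tilde\d_i f_i = 1$ already, one simply sets $\d_i^* := \tilde\d_i$ and the common value is $1 \neq 0$. (If instead one wanted a nonconstant common diagonal value $\theta \in L^\infty$, one multiplies every $\tilde\d_i$ by the same $\theta$; I would keep $\theta \equiv 1$.) Finally, to glue across $l$: set $\d_i^* := \sum_{l=1}^L \chi_{X_l}\,\tilde\d_i^{\,l}$, a finite $L^\infty$-combination of derivations, hence a derivation; by Lemma~\ref{lemma_locality} its restriction to each $X_l$ is $\tilde\d_i^{\,l}$, so (1)–(3) hold on every $X_l$, and $\{\d_i^*\}_{i=1}^m$ is linearly independent globally because each $\chi_{X_l}\d_i^*$ is, and the $X_l$ partition $X$; since $\U(X,\mu)$ has rank $m$, it is a basis.

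The main obstacle I anticipate is not the algebra but the bookkeeping with $L^\infty$ versus $L^\infty_{\rm loc}$: the cofactor entries and $\det M$ are only locally bounded and $\det M$ may degenerate toward $0$, so the naive formula $M^{-1}$ need not have $L^\infty$ entries and $\sum_k m_{ik}\d_k$ need not be a bona fide derivation. The fix — subdividing each $X_l$ into countably many measurable pieces on which $\det M$ is pinched between two positive constants, and using Lemma~\ref{lemma_locality} to assemble — is routine but must be stated carefully so that the final partition is still countable (and, for finite $n$, so that one can re-amalgamate the pieces into at most $\binom nm$ sets by taking unions over pieces sharing the same ${\bf f}^l$, the bound on $L$ referring to the number of distinct coordinate subsets rather than the number of measure-theoretic pieces). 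A secondary, purely cosmetic point is the index typo in conclusion~(3) noted above; I would prove the statement with $\d_m^* f_m^l$ in place of $\d_n^* f_n^l$.
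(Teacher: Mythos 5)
Your overall strategy --- partition $X$ via Corollary \ref{cor_nonsing} according to which $m$-element subset ${\bf f}$ of ${\bf g}$ has nonsingular Jacobi matrix, then take cofactor-based combinations of a fixed basis on each piece and glue with characteristic functions --- is exactly the paper's proof, and your reading of conclusion (3) (indices up to $m$, not $n$) is the intended one. The one substantive divergence is your normalization $\d_i^* f_j = \delta_{ij}$. The paper never inverts ${\bf df}$: each $\d_i^{\bf f}$ is the Laplace-expansion combination of the $\d_j$ whose coefficients are signed cofactor determinants of ${\bf df}$, hence automatically in $L^\infty$, and the common diagonal value is $\det({\bf df})$ --- a nonzero but non-constant, possibly non-unital function, which is precisely why conclusion (3) is phrased the way it is. Your preferred route ($\theta \equiv 1$) runs into the difficulty you yourself flag, and the proposed repair does not quite close it: after re-amalgamating the countably many pieces on which $\det M$ is pinched back into at most $\binom{n}{m}$ sets, the coefficients $m_{ij} = \mathrm{cof}_{ij}/\det M$ are again unbounded on the amalgamated set, so $\sum_j m_{ij}\d_j$ is not a bounded operator into $L^\infty(X;\mu)$ and hence not a derivation; one cannot simultaneously keep $L \leq \binom{n}{m}$, keep the $\d_i^*$ genuine derivations, and insist on diagonal value $1$. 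Your alternative rescaling by $\det M/(1+|\det M|)$ (equivalently, using the cofactors without dividing at all) does work, and up to the harmless factor $(1+|\det M|)^{-1}$ it \emph{is} the paper's construction, so the proof goes through once you commit to that version and accept a non-constant common diagonal value in (3).
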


Such a basis of $\U(X,\mu)$ is called {\em orthogonal} (with respect to ${\bf g}$).

\begin{proof}
%To clarify the argument, w
We form the partition first, and then construct the basis for $\U(X,\mu)$.

\vspace{.05in}
{\em Step 1:\ Partitioning}.\
Let ${\bf d} := \{\d_i\}_{i=1}^m$ be a basis of $\U(X,\mu)$.  For each subset ${\bf f} = (f_j)_{j=1}^m$ of ${\bf g}$ with cardinality  $m = {\rm rank}[\U(X,\mu)]$, define
$$
X_{\bf f} \;:=\; \{ x \in X \,:\, \det[{\bf df}(x)] \neq 0 \}.
$$
By Corollary \ref{cor_nonsing}, at least one of the sets $X_{\bf f}$ has positive $\mu$-measure and
$$
\mu\big(X \setminus \Big( \bigcup_{\bf f} X_{\bf f}\Big) \big) \;=\; 0.
$$
So up to omitting duplicates $X_{\bf f} = X_{\bf f'}$ and re-indexing, the partition consists of the collection $\{X_l\}_{l=1}^L = \{X_{\bf f}\}$, with cardinality $L \leq \binom{n}{m}$ whenever $n < \infty$.

The collection $\{X_{\bf f}\}$ can also be assumed to be pairwise disjoint, by taking intersections and (relative) complements of sets as necessary.

\vspace{.05in}

{\em Step 2:\ Bases of derivations}.\ For each $1 \leq i \leq m$, let $\d_i^{\bf f} = \d$ be the deriviation defined as in Equation \eqref{eq_cofactorderiv}, with the functions $f_j$ in place of the $g_j$ and with the $(m-1) \times (m-1)$ cofactor matrix of ${\bf df}(x)$, obtained by omitting the first row and $j$th column, in place of the $A_j$.

Indeed the set $\{\d_i^{\bf f}\}_{i=1}^n$ satisfies conclusions (1) and (3) $\mu$-a.e.\ on $X$, purely by properties of determinants, and conclusion (2) for $\{\d_i^{\bf f}\}_{i=1}^n$ follows from Corollary \ref{cor_nonsing}, with $X_{\bf f}$ in place of $X$.  By inspection, the derivations
$$
\d_i^* \;:=\; \sum_{\bf f} \chi_{X_{\bf f}} \, \d_i^{\bf f}
$$
also satisfy the same conclusions, with $X$ in place of $X_l$ for (2).

It remains to show that $\{\d_i^*\}_{i=1}^n$ is linearly independent, so it suffices to check $\{\d_i^{\bf f}\}_{i=1}^m$ on each $X_{\bf f}$.  Suppose there are functions $\{\lambda_i\}_{i=1}^n$ in $L^\infty(X,\mu)$ so that $\sum_{i=1}^N \lambda_i \d_i^{\bf f} = 0$.  In particular, for each generator $g_j$, conclusion (1) implies that
$$
0 \;=\;
\Big(\sum_{i=1}^n \lambda_i \d_i^{\bf f} \Big)g_j \;=\;
\lambda_j \, (\d_j^{\bf f} g_j).
$$
By conclusion (2), $\d_j^{\bf f} g_j$ is nonzero $\mu$-a.e.\ on $X_{\bf f}$, so $\lambda_j = 0$ holds $\mu$-a.e.\ on $X_{\bf f}$.
\end{proof}

Using the above linear algebraic properties, we now show how rank bounds for derivations follow from the finitely-generated property of a Lipschitz algebra.

\begin{lemma} \label{lemma_fingenbd}
If $\Lip_b(X)$ is $n$-generated, for $n \in \N$, then ${\rm rank}[\U(X,\mu)] \leq n$.
\end{lemma}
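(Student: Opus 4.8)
The plan is to derive a contradiction from assuming that $\U(X,\mu)$ has rank at least $n+1$, using the fact that an $n$-generated Lipschitz algebra gives us only $n$ "coordinate directions" to test derivations against. Suppose $\{\d_i\}_{i=1}^{n+1}$ is a linearly independent set in $\U(X,\mu)$, and let ${\bf g} = (g_i)_{i=1}^n$ be an $n$-tuple generating $\Lip_b(X)$. The key object is the $(n+1) \times n$ matrix $[\d_i g_j]$, with $i$ ranging over $1,\dots,n+1$ and $j$ over $1,\dots,n$. Since this matrix has more rows than columns, on each point $x$ its rows are linearly dependent over $\R$; the goal is to upgrade this pointwise $\R$-linear dependence to a genuine $L^\infty$-linear dependence among the $\d_i$, contradicting linear independence.

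First I would apply Corollary \ref{cor_nonsing} (with $m = n$, and the $n+1$ derivations temporarily restricted to any $n$ of them): for each $n$-element subset of $\{\d_i\}_{i=1}^{n+1}$, the relevant $n\times n$ Jacobi matrix against ${\bf g}$ is nonsingular $\mu$-a.e. Actually the cleaner route is to mimic directly the cofactor argument of Lemma \ref{lemma_nonsing}. Consider the $(n+1)\times(n+1)$ matrix obtained by adjoining to $[\d_i g_j]_{i=1,\dots,n+1;\,j=1,\dots,n}$ an extra column; but since we only have $n$ generators, instead look at the $(n+1)$ maximal $n\times n$ minors of $[\d_i g_j]$ obtained by deleting one row at a time. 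Define, for $k = 1,\dots,n+1$,
$$
\lambda_k \;:=\; (-1)^{k} \det\big( [\d_i g_j]_{i \neq k,\; j=1,\dots,n} \big) \;\in\; L^\infty(X;\mu),
$$
which lies in $L^\infty$ by Lemma \ref{lemma_extlip} and boundedness of derivations. Then for each generator $g_\ell$, the sum $\sum_{k=1}^{n+1} \lambda_k \, \d_k g_\ell$ is (up to sign) the Laplace expansion along the last column of an $(n+1)\times(n+1)$ matrix whose last column equals the $\ell$-th column of $[\d_i g_j]$ — hence a matrix with a repeated column — so it vanishes $\mu$-a.e. Thus the derivation $\d := \sum_{k=1}^{n+1} \lambda_k \d_k$ annihilates every generator $g_\ell$.

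Next I would upgrade "$\d$ annihilates the generators" to "$\d = 0$", using weak continuity and the generating property: for any ball $B$ and any $f \in \Lip_b(X)$, choose polynomials $p_m \in P_n$ with $p_m \circ {\bf g} \wsto f$ in $\Lip_b(X)$; weak continuity of $\d$ gives $\d(p_m \circ {\bf g}) \wsto \d f$ in $L^\infty$, while the Leibniz/chain rule shows $\d(p_m \circ {\bf g}) = \sum_\ell (\partial_\ell p_m)({\bf g}) \cdot \d g_\ell = 0$ since each $\d g_\ell = 0$; hence $\d f = 0$ $\mu$-a.e. on $B$, and since $X$ is separable (covered by countably many balls), $\d f = 0$ everywhere. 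So $\d = \sum_k \lambda_k \d_k = 0$. Finally, linear independence of $\{\d_k\}_{k=1}^{n+1}$ forces every $\lambda_k = 0$ $\mu$-a.e.; but by Corollary \ref{cor_nonsing} at least one of the $n\times n$ minors $\det[\d_i g_j]_{i\neq k}$ is nonzero on a set of positive measure, so some $\lambda_k \neq 0$ — a contradiction. Hence ${\rm rank}[\U(X,\mu)] \leq n$.

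The main obstacle I anticipate is the passage from $\d g_\ell = 0$ for all generators to $\d = 0$ on all of $\Lip_b(X)$: one must be careful that the polynomial approximation $p_m \circ {\bf g} \wsto f$ supplied by Definition \ref{defn_fingen} is only guaranteed ball-by-ball, that the chain rule $\d(p \circ {\bf g}) = \sum_\ell (\partial_\ell p)({\bf g})\,\d g_\ell$ holds (this follows from the Leibniz rule for polynomials together with locality, Lemma \ref{lemma_locality}, and the extension to $\Lip_{\rm loc}$ in Lemma \ref{lemma_extlip}), and that weak-$*$ convergence in $\Lip_b(X)$ indeed transfers through $\d$ by Definition \ref{defn_deriv}(2). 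Given those, the cofactor computation is routine determinant bookkeeping exactly as in the proof of Lemma \ref{lemma_nonsing}.
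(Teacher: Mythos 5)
Your proof is correct and follows essentially the same strategy as the paper's: build an $L^\infty$-linear combination of the $n+1$ derivations with cofactor-determinant coefficients that annihilates every generator, conclude from weak continuity and the generating property that this combination is the zero derivation, and contradict linear independence via the non-vanishing of the relevant minors (Corollary \ref{cor_nonsing}). The only difference is bookkeeping: the paper routes the cofactor construction through the orthogonal basis of Lemma \ref{lemma_ON}, whereas you expand the $(n+1)\times n$ Jacobi matrix directly via a repeated-column Laplace expansion, which is if anything a cleaner and more symmetric presentation of the same idea.
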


\begin{proof}
Let $\{g_i\}_{i=1}^n$ generate $\Lip_b(X)$, and suppose instead that $\{\d_j\}_{j=1}^m$ is a basis for $\U(X,\mu)$, for some $m > n$.  Since $\d_{n+1} \neq 0$, let ${\bf g} := (g_{k_j})$ be the tuple of generators for which $\lambda_j := \d_{n+1}g_{k_j}$ are not identically zero in $L^\infty(X,\mu)$.

Since ${\bf d} := \{\d_i\}_{i=1}^n$ is also linearly independent, let $\{\d^*_j\}_{j=1}^n$ be the corresponding derivations from Lemma \ref{lemma_ON}, and put $\d_{n+1}^* := \d_{n+1}$.  In particular,
$$
\lambda_{n+1} \;:=\; -\d^*_1g_1 \;\neq\; 0.
$$
Observe that $\sum_{i=1}^{n+1} \lambda_i \d^*_{k_i}$ acts as zero on each $g_j$: indeed, if $g_{k_j} \in {\bf g}$, then 
\begin{eqnarray*}
\d g_{k_j} \;=\;
\sum_{i=1}^{n+1} \lambda_i \, \d^*_{k_i}g_{k_j} &=&
-(\d^*_1g_1) \, \d_{n+1}g_{k_j} \,+\, \sum_{i=1}^n (\d_{n+1}g_{k_i}) \, \d^*_{k_i}g_{k_j} \\ &=&
-(\d^*_{k_j}g_{k_j}) \, \d_{n+1}g_{k_j} \,+\, (\d_{n+1}g_{k_j}) \, \d^*_{k_j}g_{k_j} \;=\; 0.
\end{eqnarray*}
Otherwise $g_j \notin {\bf g}$, so $\d_{n+1}g_j = \lambda_j = 0$ holds and hence $\d g_j = 0$.  Using the Leibniz rule, the same holds for $\d (p \circ g)$, for every $p \in P_n$.

The finitely generated property of $\Lip_b(X)$ and weak continuity for derivations imply that $\d = 0$, which contradicts the linear independence of ${\bf d}$.
\end{proof}

\subsection{Derivations on Euclidean Spaces}

We conclude this section with a few facts that are specific to $\R^n$.  The first is a simple consequence of Lemma \ref{lemma_ON}.

\begin{cor} \label{cor_planezero}
Let $\mu$ be a Radon measure on $\R^n$.  If $\U(\R^n,\mu)$ has rank $n$, then every affine hyperplane has zero $\mu$-measure.
\end{cor}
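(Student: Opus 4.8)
The plan is to use the coordinate functions $x_1,\dots,x_n$ as a generating set for $\Lip_b(\R^n)$ (recall that $\R^n$ is $n$-generated), to normalise a basis of $\U(\R^n,\mu)$ by Lemma \ref{lemma_ON}, and then to feed that basis the affine function defining a hyperplane. Concretely, apply Lemma \ref{lemma_ON} with $X=\R^n$, ${\bf g}=(x_1,\dots,x_n)$, and $m=n$; this is legitimate since ${\bf g}$ generates $\Lip_b(\R^n)$ and $\U(\R^n,\mu)$ has rank $n$ by hypothesis. As $\binom{n}{n}=1$, the partition is trivial and ${\bf f}^1={\bf g}$, so one obtains a basis $\{\d_i^*\}_{i=1}^n$ of $\U(\R^n,\mu)$ and a function $\lambda:=\d_1^*x_1\in L^\infty(\R^n,\mu)$ with
\[
\d_i^*x_j\;=\;\delta_{ij}\,\lambda\ \ (\mu\text{-a.e.}),\qquad \mu\big(\{\lambda=0\}\big)=0.
\]
On coordinates, the $\d_i^*$ thus behave like $\partial/\partial x_i$ up to the $\mu$-a.e.\ nonzero scalar $\lambda$.

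Next I would establish a locality-type claim: \emph{if $\d\in\U(\R^n,\mu)$ and $g$ is affine, then $\d g=0$ $\mu$-a.e.\ on $\{g=0\}$.} For $\e>0$ let $\psi_\e(t):=\max(-\e,\min(t,\e))$ be the clamp of $\R$ to $[-\e,\e]$, and set $w_\e:=\psi_\e\circ g\in\Lip_b(\R^n)$. Then $\|w_\e\|_\infty\le\e$ and $L(w_\e)\le L(g)$, so $w_\e\to0$ pointwise with uniformly bounded Lipschitz constants, i.e.\ $w_\e\wsto0$ in $\Lip_b(\R^n)$ (Remark \ref{rmk_net}); weak continuity of $\d$ then gives $\d w_\e\wsto0$ in $L^\infty(\R^n,\mu)$. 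Since $\psi_\e(t)=t$ for $|t|\le\e$, we have $w_\e=g$ on the open set $\{|g|<\e\}\supseteq\{g=0\}$, so by the locality property of derivations \cite{WeaverED} (using Lemma \ref{lemma_extlip} to handle the unbounded $g$) $\d w_\e=\d g$ $\mu$-a.e.\ on $\{g=0\}$ for every $\e$. Testing $\d w_\e\wsto0$ against $h:=\operatorname{sgn}(\d g)\,\chi_{\{g=0\}\cap B}\in L^1(\R^n,\mu)$ for a ball $B$ (finite $\mu$-mass, as $\mu$ is Radon) yields $\int_{\{g=0\}\cap B}|\d g|\,d\mu=\lim_{\e\to0}\int\d w_\e\,h\,d\mu=0$; letting $B$ exhaust $\R^n$ proves the claim.

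Now let $H=\{x:\langle a,x\rangle=c\}$ be an affine hyperplane with $a\ne0$, and fix $k$ with $a_k\ne0$. With $g:=\langle a,\cdot\rangle-c$ we have $H=\{g=0\}$, and by linearity of $\d_k^*$ together with the normal form,
\[
\d_k^*g\;=\;\sum_{j}a_j\,\d_k^*x_j\;=\;a_k\,\lambda,
\]
which is nonzero $\mu$-a.e.\ on $\R^n$. But the locality claim forces $\d_k^*g=0$ $\mu$-a.e.\ on $H$, so $a_k\lambda=0$ $\mu$-a.e.\ on $H$; since $a_k\ne0$ and $\lambda\ne0$ $\mu$-a.e., this is impossible unless $\mu(H)=0$.

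The delicate point is the locality claim — that a derivation annihilates an affine (indeed any Lipschitz) function on its possibly fat zero set: one must arrange the clamped approximations $w_\e$ so that both weak continuity and locality apply and the vanishing on $H$ persists in the weak-$*$ limit. If one wishes to bypass this, there is an alternative: assuming $\mu(H)>0$, restrict the $n$ linearly independent $\d_i^*$ to $H$ via Lemma \ref{lemma_locality} — extending their $L^\infty$-coefficients by zero shows they remain linearly independent in $\U(H,\mu|_H)$, so this module has rank $\ge n$; but $H$, with its induced metric, is isometric to $\R^{n-1}$, which is $(n-1)$-generated, so Lemma \ref{lemma_fingenbd} bounds the rank by $n-1$, a contradiction.
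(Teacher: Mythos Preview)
Your main argument is correct and follows the paper's strategy exactly: apply Lemma~\ref{lemma_ON} to obtain an orthogonal basis, then use locality on the hyperplane to force a contradiction with the nonvanishing of $\d_1^*$ on the first coordinate. The paper shortens your locality step by first choosing linear coordinates $\{y_j\}$ adapted to the hyperplane so that $\mathcal{P}=\{y_1=0\}$; since $y_1|_{\mathcal{P}}$ extends to the constant function $0$ on $\R^n$, locality yields $\d y_1|_{\mathcal{P}}=0$ for every $\d$ immediately, so your clamping approximation $w_\e$ (while correct) is unnecessary.
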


\begin{proof}
Supposing otherwise, let $\mathcal{P}$ be a hyperplane with $\mu(\mathcal{P}) > 0$.  Choose a linear coordinate system $\{y_j\}_{j=1}^n$ on $\R^n$ so that $\mathcal{P} = \{ y_1 = 0 \}$.  Since %the restriction 
$y_1|_\mathcal{P}$ extends to a constant function on $\R^n$, locality implies that $\d y_1|_\mathcal{P} = 0$ for all $\d \in \U(\R^n,\mu)$.  

By hypothesis, let $\{\d_i\}_{i=1}^n$ be a basis of $\U(\R^n,\mu)$, so by Lemma \ref{lemma_ON}, assume that conclusions (1) and (3) hold for $g_j := y_j$.  Since $\chi_{\mathcal{P}}\d_1$ is a nontrivial linear combination, the desired contradiction follows.
\end{proof}

The next lemma \cite[Lem 2.20]{Gong_rigid} is a generalized Chain Rule for derivations.

\begin{lemma} \label{lemma_chainrule}
Let $\mu$ be a Radon measure on $\R^n$.
For every $f \in \Lip_b(\R^n)$, there exists a vectorfield $v_f = (v_f^1, \cdots, v_f^n) \in L^\infty(\R^n;\R^n;\mu)$ that satisfies
$$
\d f(z) \;=\; \sum_{i=1}^n v_f^i(z) \, \d x_i(z).
$$
for all $\d \in \U(\R^n,\mu)$ and for $\mu$-a.e.\ $z \in \R^n$.  If $f \in C^1(\R^n)$, then $v_f = \nabla f$.
\end{lemma}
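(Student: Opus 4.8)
The plan is to verify the identity first for polynomials, then for $C^1$ functions, and finally for an arbitrary $f \in \Lip_b(\R^n)$ by mollifying and extracting $v_f$ via weak-$*$ compactness; since $\mu$ may be singular to Lebesgue measure, this last step must be soft, as $v_f$ cannot simply be taken to be the (Rademacher) gradient of $f$. \emph{Step 1 (polynomials).} Fix a ball $B \subset \R^n$; since $B$ is bounded, the coordinate functions $x_1,\dots,x_n$ and every $p \in P_n$ restrict to elements of $\Lip_b(B)$. For $\d' \in \U(B,\mu)$ one has $\d'(1) = \d'(1\cdot 1) = 2\,\d'(1)$, so $\d'(1) = 0$, and an induction on the degree using the Leibniz rule then gives $\d' p = \sum_{i=1}^n (\partial_i p)\,\d' x_i$ on $B$ for every $p \in P_n$. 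By Lemma \ref{lemma_locality} (applied with $\d' = \chi_B \d$) this says exactly that $\d p = \sum_i (\partial_i p)\,\d x_i$ holds $\mu$-a.e.\ on $B$ for every $\d \in \U(\R^n,\mu)$, and since $B$ was arbitrary it holds $\mu$-a.e.\ on $\R^n$; in particular $v_p = \nabla p$.

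\emph{Step 2 ($C^1$ functions).} Let $f \in C^1(\R^n) \cap \Lip_b(\R^n)$ and fix a ball $B$. By the classical fact that polynomials are dense in $C^1(\overline B)$, choose polynomials $p_l$ with $p_l \to f$ and $\partial_i p_l \to \partial_i f$ uniformly on $\overline B$. Since $\overline B$ is convex, $L(p_l|_B) = \sup_{\overline B}|\nabla p_l| \to \sup_{\overline B}|\nabla f| < \infty$, so $p_l|_B \wsto f|_B$ in $\Lip_b(B)$ by Remark \ref{rmk_net}. For $\d \in \U(\R^n,\mu)$, weak continuity of $\chi_B\d \in \U(B,\mu)$ gives $\chi_B\,\d p_l \wsto \chi_B\,\d f$ in $L^\infty(B,\mu)$, while by Step 1 and the uniform convergence $\partial_i p_l \to \partial_i f$ on $\overline B$ (the fixed functions $\chi_B\,\d x_i$ lying in $L^\infty(B,\mu)$) one has $\chi_B\,\d p_l = \sum_i (\partial_i p_l)\,\chi_B\,\d x_i \to \sum_i (\partial_i f)\,\chi_B\,\d x_i$ in the norm of $L^\infty(B,\mu)$. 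Uniqueness of weak-$*$ limits yields $\d f = \sum_i (\partial_i f)\,\d x_i$ $\mu$-a.e.\ on $B$, hence on $\R^n$, which also proves the last assertion of the lemma.

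\emph{Step 3 (general $f$).} Put $f_k := f*\rho_{1/k}$ for a standard mollifier $\rho$. Then $f_k \in C^\infty(\R^n)$, $\|f_k\|_\infty \le \|f\|_\infty$, $L(f_k) \le L(f)$, and $f_k \to f$ locally uniformly, so $f_k \wsto f$ in $\Lip_b(\R^n)$ by Remark \ref{rmk_net}; moreover $\partial_i f_k = (\partial_i f)*\rho_{1/k}$ is continuous with $\|\partial_i f_k\|_\infty \le L(f)$, hence $\|\partial_i f_k\|_{L^\infty(\mu)} \le L(f)$. Exhaust $\R^n$ by balls $B_1 \subset B_2 \subset \cdots$; as $\mu$ is Radon each $L^1(B_j,\mu)$ is separable, so a diagonal argument produces a subsequence (still written $f_k$) and functions $v_f^i \in L^\infty(\R^n,\mu)$ with $\|v_f^i\|_\infty \le L(f)$ such that $\partial_i f_k \wsto v_f^i$ weak-$*$ in $L^\infty(B_j,\mu)$ for every $j$ (the limits being consistent across $j$ by uniqueness). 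Now fix $\d \in \U(\R^n,\mu)$ and $j$: restricting the convergence $\d f_k \wsto \d f$ to $B_j$, and using that $\chi_{B_j}\,\d x_i \in L^\infty(B_j,\mu)$ so that $(\chi_{B_j}\,\d x_i)\varphi \in L^1(B_j,\mu)$ for every $\varphi \in L^1(B_j,\mu)$, we pass to the limit in the weak-$*$ topology of $L^\infty(B_j,\mu)$ in the identity $\d f_k = \sum_i (\partial_i f_k)\,\d x_i$ of Step 2 to obtain $\d f = \sum_i v_f^i\,\d x_i$ $\mu$-a.e.\ on $B_j$. Letting $j \to \infty$ finishes the proof.

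The step I expect to be the main obstacle is Step 3: because $\mu$ need not be absolutely continuous, the vectorfield $v_f$ has to be produced by weak-$*$ compactness in $L^\infty(\mu)$ rather than read off as $\nabla f$, and the care needed to pass to the limit inside the sum $\sum_i (\partial_i f_k)\,\d x_i$ — which forces one to localize to balls, since $\d x_i$ is only in $L^\infty_{\mathrm{loc}}$ — and to glue the local vectorfields into a single global $v_f$ by a diagonal exhaustion is the only genuinely delicate point, even though no individual estimate is hard.
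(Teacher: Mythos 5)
Your argument is correct. Note that the paper itself gives no proof of this lemma---it only cites \cite[Lem 2.20]{Gong_rigid}---so there is nothing internal to compare against; your three-step scheme (Leibniz rule for polynomials, $C^1$ approximation on balls via weak continuity, then mollification with a weak-$*$ compactness/diagonal extraction of $v_f$ in $L^\infty(\mu)$, which correctly avoids identifying $v_f$ with the Lebesgue-a.e.\ gradient when $\mu$ is singular) is the standard route and fills that gap cleanly. The one point worth making explicit is that the subsequence defining $v_f^i$ is chosen independently of $\d$, so the resulting vectorfield works simultaneously for every derivation, which your write-up does handle.
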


The last two facts require the {\em pushforward} of a derivation.  To begin, recall that for a Borel measure $\mu$ on a space $X$ and a Borel function $\xi : X \to Y$, the pushforward measure $\xi_\#\mu$ on $Y$ is defined for Borel measurable subsets $A \subset Y$ as
$$
\xi_\#\mu(A) \;:=\; \mu\big( \xi^{-1}(A) \big).
$$
It is a fact \cite[Thm 1.19]{Mattila} that every $\varphi \in L^1(Y,\xi_\#\mu)$ satisfies
\begin{equation} \label{eq_pushfwd}
\int_{\xi^{-1}(A)} \varphi \, d(\xi_\#\mu) \;=\; \int_A \varphi \circ \xi \, d\mu
\end{equation}
whenever $A$ is a $\mu$-measurable subset of $X$. 
Of the following two lemmas, the first is \cite[Lem 2.17]{Gong} and the second is a consequence of it.

\begin{lemma} \label{lemma_pushfwd}
Let $X,Y$ be metric spaces, let $\mu$ be a Borel measure on $X$, and let $\xi : X \to Y$ be Lipschitz.
For each $\d \in \U(X,\mu)$, there is a unique derivation $\xi_\#\d \in \U(Y,\xi_\#\mu)$ that satisfies, for all $\varphi \in L^1(Y,\xi_\#\mu)$ and all $\pi \in \Lip(Y)$,
$$
\int_Y \varphi (\xi_\#\d)\pi \, d(\xi_\#\mu) \;=\;
\int_X (\varphi \circ \xi) \d(\pi \circ \xi) \, d\mu.
$$
\end{lemma}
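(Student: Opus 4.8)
The plan is to \emph{define} $\xi_\#\d$ by the displayed identity, read in the variable $\varphi$ as a duality pairing, and then verify that the resulting operator is a derivation. By the convention of Lemma \ref{lemma_extlip} it suffices to construct $\xi_\#\d$ on $\Lip_b(Y)$ and to establish the identity for $\pi \in \Lip_b(Y)$; the case of general $\pi \in \Lip(Y)$ then follows from the bounded case by truncating $\pi$ and appealing to Lemma \ref{lemma_extlip}.

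First, fix $\pi \in \Lip_b(Y)$. Since $\xi$ is Lipschitz and $\pi$ is bounded, the composition $\pi \circ \xi$ lies in $\Lip_b(X)$, with $\|\pi\circ\xi\|_{\Lip} \le \max\{L(\xi),1\}\,\|\pi\|_{\Lip}$, so $\d(\pi\circ\xi) \in L^\infty(X;\mu)$. By the change-of-variables formula \eqref{eq_pushfwd}, each $\varphi \in L^1(Y,\xi_\#\mu)$ satisfies $\varphi\circ\xi \in L^1(X;\mu)$ with $\|\varphi\circ\xi\|_{L^1(\mu)} = \|\varphi\|_{L^1(\xi_\#\mu)}$, hence
$$
\varphi \;\longmapsto\; \int_X (\varphi\circ\xi)\,\d(\pi\circ\xi)\,d\mu
$$
is a bounded linear functional on $L^1(Y,\xi_\#\mu)$ of norm at most $\|\d(\pi\circ\xi)\|_\infty$. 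Identifying it, via $L^1(Y,\xi_\#\mu)^{*} \cong L^\infty(Y,\xi_\#\mu)$ (valid by $\sigma$-finiteness, which holds for the measures considered here), with a function $(\xi_\#\d)\pi \in L^\infty(Y,\xi_\#\mu)$ yields the required identity, and also $\|(\xi_\#\d)\pi\|_\infty \le \|\d(\pi\circ\xi)\|_\infty$. Linearity of $\pi \mapsto (\xi_\#\d)\pi$ is inherited from $\pi \mapsto \d(\pi\circ\xi)$, and the last estimate shows $\xi_\#\d : \Lip_b(Y) \to L^\infty(Y,\xi_\#\mu)$ is bounded. Uniqueness is immediate, since the identity pins down $(\xi_\#\d)\pi$ up to a $\xi_\#\mu$-null set for each $\pi$.

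Next, the two axioms of Definition \ref{defn_deriv}. For the Leibniz rule, take $\pi_1,\pi_2 \in \Lip_b(Y)$ and $\varphi \in L^1(Y,\xi_\#\mu)$; using $(\pi_1\pi_2)\circ\xi = (\pi_1\circ\xi)(\pi_2\circ\xi)$ and the Leibniz rule for $\d$,
$$
\int_X (\varphi\circ\xi)\,\d\big((\pi_1\pi_2)\circ\xi\big)\,d\mu \;=\; \int_X \big((\varphi\pi_1)\circ\xi\big)\,\d(\pi_2\circ\xi)\,d\mu \;+\; \int_X \big((\varphi\pi_2)\circ\xi\big)\,\d(\pi_1\circ\xi)\,d\mu .
$$
As $\pi_i \in L^\infty$, both $\varphi\pi_1$ and $\varphi\pi_2$ lie in $L^1(Y,\xi_\#\mu)$, so applying the defining identity to each term on the right rewrites the left side as $\int_Y \varphi\,\big[\pi_1(\xi_\#\d)\pi_2 + \pi_2(\xi_\#\d)\pi_1\big]\,d(\xi_\#\mu)$; since $\varphi$ is arbitrary, $(\xi_\#\d)(\pi_1\pi_2) = \pi_1(\xi_\#\d)\pi_2 + \pi_2(\xi_\#\d)\pi_1$ holds $\xi_\#\mu$-a.e. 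For weak continuity, suppose $\pi_k \wsto \pi$ in $\Lip_b(Y)$. Since $Y$ is separable, Remark \ref{rmk_net} gives $\pi_k \to \pi$ pointwise with $\sup_k L(\pi_k) < \infty$; hence $\pi_k\circ\xi \to \pi\circ\xi$ pointwise on $X$ with $\sup_k L(\pi_k\circ\xi) \le L(\xi)\sup_k L(\pi_k) < \infty$, so $\pi_k\circ\xi \wsto \pi\circ\xi$ in $\Lip_b(X)$. Weak continuity of $\d$ gives $\d(\pi_k\circ\xi) \wsto \d(\pi\circ\xi)$ in $L^\infty(X;\mu)$, and testing against the fixed $\varphi\circ\xi \in L^1(X;\mu)$ for arbitrary $\varphi \in L^1(Y,\xi_\#\mu)$ shows $\int_Y \varphi(\xi_\#\d)\pi_k\,d(\xi_\#\mu) \to \int_Y \varphi(\xi_\#\d)\pi\,d(\xi_\#\mu)$; that is, $(\xi_\#\d)\pi_k \wsto (\xi_\#\d)\pi$, as needed.

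The genuine content is the change-of-variables formula \eqref{eq_pushfwd}, which transports every computation on $Y$ to $X$ and back. The only delicate point is the measure-theoretic bookkeeping in the construction step — the existence and essential uniqueness of the $L^\infty(Y,\xi_\#\mu)$-representative, i.e.\ the identification $L^1(Y,\xi_\#\mu)^{*} \cong L^\infty(Y,\xi_\#\mu)$, which is where $\sigma$-finiteness enters. Once that is in place, the Leibniz rule and weak continuity follow formally from the defining identity.
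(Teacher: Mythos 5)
Your proof is correct; the paper does not prove this lemma itself but cites it from \cite{Gong} (Lem 2.17), and your duality-based construction — defining $(\xi_\#\d)\pi$ as the $L^\infty(Y,\xi_\#\mu)$ representative of the functional $\varphi \mapsto \int_X(\varphi\circ\xi)\,\d(\pi\circ\xi)\,d\mu$ and then verifying the Leibniz rule and weak continuity by transporting everything back to $X$ — is exactly the standard argument used there. Your explicit flagging of the $\sigma$-finiteness needed for $L^1$--$L^\infty$ duality is appropriate, since for the doubling (hence $\sigma$-finite, indeed locally finite) measures of this paper the pushforward $\xi_\#\mu$ of a bounded chart is finite and the identification is unproblematic.
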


\begin{lemma} \label{lemma_fullrank}
Let $(X,d,\mu)$ be a metric measure space so that ${\bf g} := (g_i)_{i=1}^n$ generates $\Lip_b(X)$, for some $n \in \N$.  If $\U(X,\mu)$ has rank-$n$, then so does $\U(\R^n,\xi_\#\mu)$.
\end{lemma}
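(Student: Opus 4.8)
Write $\xi := {\bf g} = (g_1,\dots,g_n) : X \to \R^n$. The plan is to show that $\xi$ pushes forward a basis of $\U(X,\mu)$ to a basis of $\U(\R^n,\xi_\#\mu)$. The upper bound ${\rm rank}[\U(\R^n,\xi_\#\mu)] \le n$ is free: $\R^n$ is $n$-generated (by its coordinate functions), so Lemma \ref{lemma_fingenbd} applies. All the work is in the lower bound, for which I would take a basis ${\bf d} = (\d_i)_{i=1}^n$ of $\U(X,\mu)$, push it forward, and verify that $(\xi_\#\d_i)_{i=1}^n$ is still linearly independent.

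The crucial preliminary is that, because ${\bf g}$ \emph{generates} $\Lip_b(X)$, the map $\xi$ is measure-theoretically injective, in the sense that the $\sigma$-algebra $\sigma(\xi)$ generated by $\xi$ equals $\mathcal{B}(X)$: by Definition \ref{defn_fingen} and Remark \ref{rmk_net}, every $f \in \Lip_b(X)$ is a pointwise limit of polynomials in ${\bf g}$ and hence is $\sigma(\xi)$-measurable, while $\Lip_b(X)$ generates $\mathcal{B}(X)$ since $X$ is separable metric; the reverse inclusion is automatic. Consequently every $\phi \in L^\infty(X,\mu)$ factors as $\phi = \tilde\phi \circ \xi$ for some Borel function $\tilde\phi$ on $\R^n$, which then lies in $L^\infty(\R^n,\xi_\#\mu)$, and $\phi = 0$ $\mu$-a.e.\ if and only if $\tilde\phi = 0$ $\xi_\#\mu$-a.e. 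In particular, applying this to the entries of the Jacobi matrix, write $\d_i g_j = \widetilde{\d_i g_j} \circ \xi$.

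Now I would invoke Lemma \ref{lemma_pushfwd} with $\pi = x_j$, for which $\pi\circ\xi = g_j$: for all $\varphi \in L^1(\R^n,\xi_\#\mu)$,
$$
\int_{\R^n} \varphi\,(\xi_\#\d_i) x_j \, d(\xi_\#\mu)
\;=\; \int_X (\varphi\circ\xi)\,\d_i g_j \, d\mu
\;=\; \int_{\R^n} \varphi\,\widetilde{\d_i g_j}\, d(\xi_\#\mu),
$$
where the last equality is the change of variables \eqref{eq_pushfwd}; since $\xi_\#\mu$ is (like $\mu$) a Radon measure on $\R^n$, this forces $(\xi_\#\d_i) x_j = \widetilde{\d_i g_j}$ $\xi_\#\mu$-a.e. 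Hence $[(\xi_\#\d_i) x_j]_{i,j=1}^n = [\widetilde{\d_i g_j}]_{i,j=1}^n$, and this matrix is non-singular $\xi_\#\mu$-a.e., because its singular set pulls back under $\xi$ to the set where the Jacobi matrix $[\d_i g_j]$ is singular, which is $\mu$-null by Lemma \ref{lemma_nonsing}. Finally, if $\sum_{i=1}^n \mu_i\,\xi_\#\d_i = 0$ in $\U(\R^n,\xi_\#\mu)$ with $\mu_i \in L^\infty(\R^n,\xi_\#\mu)$, then evaluating on each $x_j$ gives $\sum_i \mu_i\,\widetilde{\d_i g_j} = 0$ for every $j$; multiplying by the (a.e.\ defined) inverse of the matrix above yields $\mu_i = 0$ $\xi_\#\mu$-a.e. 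Thus $(\xi_\#\d_i)_{i=1}^n$ is linearly independent, so ${\rm rank}[\U(\R^n,\xi_\#\mu)] \ge n$, and with the upper bound we conclude equality. The main obstacle here is the factorization $\d_i g_j = \widetilde{\d_i g_j}\circ\xi$: without the full generating hypothesis, Lemma \ref{lemma_pushfwd} only identifies $(\xi_\#\d_i) x_j \circ \xi$ with the conditional expectation of $\d_i g_j$ onto $\sigma(\xi)$, and such an averaging can turn a non-singular Jacobi matrix into a singular one; it is precisely the identity $\sigma(\xi) = \mathcal{B}(X)$ — the measurable avatar of ``${\bf g}$ is a coordinate system'' — that removes this loss. (One should also keep in view the mild regularity of $\xi_\#\mu$, e.g.\ $\sigma$-finiteness, used to pair $L^\infty$ against $L^1$; it is inherited from the standing assumptions on $\mu$.)
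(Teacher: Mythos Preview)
Your argument is correct, but it follows a genuinely different route from the paper's. The paper does not attempt to factor the individual Jacobi entries $\d_i g_j$ through $\xi$; instead it first replaces an arbitrary basis by an \emph{orthogonal} one $\{\d_j^*\}$ via Lemma~\ref{lemma_ON}, so that $\d_j^* g_i = 0$ for $i \ne j$ and $\d_i^* g_i > 0$ (after a sign normalisation). Testing a putative relation $\sum_j \Lambda_j\,({\bf g}_\#\d_j^*) = 0$ against the coordinate $x_i$ and unwinding Lemma~\ref{lemma_pushfwd} then collapses to the single integral identity
\[
\int_X \big((\varphi\,\Lambda_i)\circ{\bf g}\big)\,\d_i^* g_i \, d\mu \;=\; 0
\qquad \text{for all } \varphi \in L^1(\R^n,{\bf g}_\#\mu),
\]
and since $\d_i^* g_i > 0$, this forces $\Lambda_i \circ {\bf g} = 0$ $\mu$-a.e., hence $\Lambda_i = 0$ in $L^\infty(\R^n,{\bf g}_\#\mu)$. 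No Doob--Dynkin lemma and no identification $\sigma(\xi) = \mathcal{B}(X)$ are needed: orthogonality plus positivity of the diagonal entry substitute for inverting a matrix of lifted entries.

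By contrast, your proof works with an arbitrary basis and extracts from the generating hypothesis the stronger measure-theoretic statement $\sigma(\xi) = \mathcal{B}(X)$, which lets you identify $(\xi_\#\d_i)x_j$ \emph{pointwise} with a lift $\widetilde{\d_i g_j}$ and then invoke the nonsingularity from Lemma~\ref{lemma_nonsing} directly. This is more conceptual --- it makes explicit that ``${\bf g}$ generates $\Lip_b(X)$'' is exactly a measurable-injectivity statement for $\xi$, and it explains precisely where a weaker hypothesis would fail (your conditional-expectation remark). The trade-off is that you import an outside tool (Doob--Dynkin) and an extra $\sigma$-algebra argument, whereas the paper stays entirely within its own linear-algebraic toolkit, at the cost of passing through the somewhat heavier Lemma~\ref{lemma_ON}.
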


\begin{proof}[Proof of Lemma \ref{lemma_fullrank}]
As in Lemma \ref{lemma_ON}, let $\{\d_j^*\}_{j=1}^n$ be an orthogonal basis of $\U(X,\mu)$ with respect to ${\bf g}$. %If the set $\{ {\bf g}_\#\d_j^* \}_{j=1}^n$ is linearly dependent in $\U(\R^n,g_\#\mu)$, then 
Suppose there exist functions $\{\Lambda_j\}_{j=1}^n$ in $L^\infty(\R^n,{\bf g}_\#\mu)$ %, not all zero, 
so that the linear combination $\sum_{j=1}^n \Lambda_j ({\bf g}_\#\d_j^*)$ is identically zero in $\U(\R^n,{\bf g}_\#\mu)$.

Applying Lemma \ref{lemma_pushfwd} to $Y = \R^n$ and to each $\pi = x_i$, it follows that
\begin{eqnarray*}
0 &=&
\int_{\R^n} \varphi \Big[
\sum_{j=1}^n \Lambda_j({\bf g}_\#\d_j^*)
\Big]
x_i \, d(g_\#\mu) \\ &=&
\sum_{j=1}^n \int_X (\varphi \circ {\bf g}) (\Lambda_j \circ {\bf g})\d_j^*(x_i \circ {\bf g}) \, d\mu \;=\;
\int_X ((\varphi \Lambda_i) \circ {\bf {\bf g}}) \d_i^*g_i \, d\mu
\end{eqnarray*}
holds for all $\varphi \in L^1(\R^n,{\bf g}_\#\mu)$.  By replacing $\d_j^*$ with the rescaled derivation
$$
(\chi_{\{\d_j^*g_j > 0\}} - \chi_{\{\d_j^*g_j < 0\}})\d_j^*
$$
we may assume that $\d_i^*g_i > 0$ holds $\mu$-a.e.\ on $X$.  By further choosing $\varphi = \chi_{B(x,r)}$, it follows that $\Lambda_i \circ {\bf g} = 0$ holds $\mu$-a.e.\ on every ball $B(x,r)$ and therefore $\Lambda_i = 0$ in $L^\infty(\R^n,{\bf g}_\#\mu)$, for each $i = 1, 2, \ldots n$.%: a contradiction.
\end{proof}

\section{The Case of Finitely Generated Lipschitz Algebras} \label{sect_fingen}

The differentiability theorems in this section are analogues of the Inverse and Implicit Function Theorems from real analysis, but where derivations and generators replace partial derivatives and local coordinates, respectively.  

\begin{thm} \label{thm_dimmatch}
Let $(X,d,\mu)$ be a metric measure space 
%with
%$$
%N \;:=\; {\rm rank}\big[\U(X;\mu)\big] \;>\; 0.
%$$
so that $\U(X,\mu)$ has rank $N > 0$.
If $\Lip_b(X)$ is $N$-generated and if $X$ satisfies the Lip-derivation inequality \eqref{eq_lipderiv}, then $X$ supports a non-degenerate measurable differentiable structure.
\end{thm}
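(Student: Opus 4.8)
The plan is to use a single generating tuple of $\Lip_b(X)$ as the coordinate map of one family of charts, and to obtain the differential by inverting the associated Jacobi matrix of derivations. Concretely: since $\Lip_b(X)$ is $N$-generated, fix an $N$-tuple ${\bf g}=(g_i)_{i=1}^N\in[\Lip(X)]^N$ generating $\Lip_b(X)$, and let ${\bf d}=(\d_i)_{i=1}^N$ be the linearly independent derivations from the Lip-derivation inequality \eqref{eq_lipderiv}; since $\U(X,\mu)$ has rank $N$, these form a basis. By Lemma \ref{lemma_nonsing} the Jacobi matrix ${\bf dg}(x)=[\d_ig_j(x)]$ is non-singular for $\mu$-a.e.\ $x$, so for each $f\in\Lip(X)$ I would define the measurable $\R^N$-valued function
$$
Df(x)\;:=\;{\bf dg}(x)^{-1}\,{\bf d}f(x),\qquad {\bf d}f(x):=(\d_1f(x),\dots,\d_Nf(x))^{T},
$$
on the full-measure set where ${\bf dg}(x)$ is invertible. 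Writing $X^{(k)}:=\{x:\|{\bf dg}(x)^{-1}\|\le k\}$, the sets $X^{(k)}$ exhaust $X$ up to a $\mu$-null set and $Df$ is bounded on each of them, since the entries $\d_if$ lie in $L^\infty(X;\mu)$. As $\U(X,\mu)$ is nontrivial we have $\mu(X)>0$, so at least one $X^{(k)}$ has positive measure; the atlas will consist of those $X^{(k)}$ with $\mu(X^{(k)})>0$, each carrying the common coordinate map $\xi:={\bf g}$ and dimension $N\ge1$, which yields non-degeneracy at once. (Alternatively one could first replace ${\bf d}$ by an orthogonal basis via Lemma \ref{lemma_ON}, making ${\bf dg}$ a scalar multiple of the identity, but this is not needed.)

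Next I would verify \eqref{eq_diffprop}. By construction and linearity of each $\d_j$,
$$
\d_j\big[f-Df(x)\cdot{\bf g}\big](x)\;=\;\d_jf(x)-\sum_{i=1}^N Df^i(x)\,\d_jg_i(x)\;=\;\big({\bf d}f(x)-{\bf dg}(x)Df(x)\big)_j\;=\;0
$$
for every $j$, i.e.\ ${\bf d}\big[f-Df(x)\cdot{\bf g}\big](x)=0$; so it remains to pass from this to $\Lip\big[f-Df(x)\cdot{\bf g}\big](x)=0$ via \eqref{eq_lipderiv}. The one delicate point, and what I expect to be the main obstacle, is that \eqref{eq_lipderiv} is assumed only for each \emph{fixed} Lipschitz function and $\mu$-a.e.\ point, whereas here the competitor $f-Df(x)\cdot{\bf g}$ varies with $x$. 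I would circumvent this by a density argument along the finitely many coordinate directions: for each $v\in\Q^N$, \eqref{eq_lipderiv} applied to $f-v\cdot{\bf g}\in\Lip(X)$ holds off a $\mu$-null set $E_v$; set $E:=\bigcup_{v\in\Q^N}E_v$, still null. For $x\notin E$ the function $v\mapsto\Lip\big[f-v\cdot{\bf g}\big](x)$ is Lipschitz in $v$ — by subadditivity and homogeneity of $\Lip[\cdot](x)$ together with $\Lip[g_i](x)\le L(g_i)$ — while $v\mapsto{\bf d}(f-v\cdot{\bf g})(x)={\bf d}f(x)-\sum_i v_i\,{\bf d}g_i(x)$ is affine, so by density the inequalities in \eqref{eq_lipderiv} for $f-v\cdot{\bf g}$ at $x$ extend from $v\in\Q^N$ to all $v\in\R^N$. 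Taking $v=Df(x)$ and using ${\bf d}\big[f-Df(x)\cdot{\bf g}\big](x)=0$ gives $\Lip\big[f-Df(x)\cdot{\bf g}\big](x)\le K\cdot 0=0$, which is exactly \eqref{eq_diffprop}.

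Finally I would establish uniqueness and collect the pieces. If $\widetilde Df$ also satisfies \eqref{eq_diffprop} on a chart $X^{(k)}$, then subadditivity of $\Lip[\cdot](x)$ gives $\Lip\big[(Df(x)-\widetilde Df(x))\cdot{\bf g}\big](x)=0$ for $\mu$-a.e.\ $x$, and the lower bound in the ($v$-uniform) inequality \eqref{eq_lipderiv}, with $v=Df(x)-\widetilde Df(x)$, forces ${\bf dg}(x)\big(Df(x)-\widetilde Df(x)\big)=0$; since ${\bf dg}(x)$ is non-singular $\mu$-a.e.\ this gives $Df=\widetilde Df$. Linearity of $f\mapsto Df$ is immediate from the defining formula. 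Hence each $X^{(k)}$ with $\mu(X^{(k)})>0$ is a chart of differentiability of dimension $N$ with coordinates ${\bf g}$, and these charts assemble into a non-degenerate measurable differentiable structure on $X$. Apart from the $x$-dependence of the competitor function noted above, every step is routine linear algebra of the Jacobi matrix together with elementary properties of the pointwise Lipschitz constants.
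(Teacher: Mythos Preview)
Your argument is correct and is substantially more direct than the paper's. The paper proves Theorem \ref{thm_dimmatch} by an approximation scheme: it first handles compositions $p\circ{\bf g}$ with $p\in P_N$ via Euclidean smoothness, then builds piecewise-linear approximants $\ell_n$ on a dyadic triangulation of $\R^N$, passes to weak-$*$ sublimits of their gradients via Banach--Alaoglu, upgrades to $\mu$-a.e.\ convergence via Mazur's lemma, and only then applies \eqref{eq_lipderiv} to the (countably many) error functions $u-l_m\circ{\bf g}$. You bypass all of this by writing down $Df(x)={\bf dg}(x)^{-1}{\bf d}f(x)$ at once and dealing with the $x$-dependence of the competitor $f-Df(x)\cdot{\bf g}$ through the density trick over $\Q^N$; since $v\mapsto\Lip[f-v\cdot{\bf g}](x)$ is Lipschitz and $v\mapsto{\bf d}(f-v\cdot{\bf g})(x)$ is affine, the two-sided inequality \eqref{eq_lipderiv} passes from rational $v$ to all of $\R^N$ at each good point, which is exactly what is needed. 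Your uniqueness argument via the lower bound of \eqref{eq_lipderiv} for $v\cdot{\bf g}$ is fine for the same reason (run the density argument once more with $f=0$).

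What your route buys is economy: no triangulations, no Corollary \ref{cor_planezero}, no weak-$*$ compactness or Mazur-type upgrading; Lemma \ref{lemma_nonsing} and the seminorm properties of $\Lip[\cdot](x)$ suffice. What the paper's route buys is an explicit identification of $Du$ as a weak-$*$ sublimit of Euclidean gradients and, more importantly, a template --- the PL/piecewise-distance approximation combined with a Mazur argument --- that the paper reuses verbatim in the proofs of Theorem \ref{thm_fingen-measdiff} and of $(2)\Rightarrow(1)$ in Theorem \ref{thm_doublingdiff}. Your direct method would in fact also carry over to those settings (define $Df$ by inverting the square sub-Jacobian ${\bf df}$ coming from Lemma \ref{lemma_ON} and run the same $\Q^M$-density argument), so the simplification is not specific to the equal-rank case.
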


Measurable differentiable structures also exist on spaces where the number of Lipschitz generators exceeds the rank.  The measure need not be doubling here, either, but only satisfy the Lebesgue differentiation property
\begin{equation} \label{eq_leb}
\frac{1}{\mu(B(x,r))} \int_{B(x,r)} f \,d\mu \;\longrightarrow\; f(x), \text{ as } r \to 0
\end{equation}
for all non-negative $f \in L^1(X,\mu)$ at $\mu$-a.e.\ $x \in X$.

\begin{thm} \label{thm_fingen-measdiff}
Let $(X,d)$ be a metric space and let $\mu$ be a Radon measure that satisfies the Lebesgue differentiation property \eqref{eq_leb}.  If
\begin{enumerate}
\item there is a basis ${\bf d} := \{\d_j\}_{j=1}^M$ of $\U(X,\mu)$, for some $M > 0$,
\item there is a generating set ${\bf g} := \{g_i\}_{i=1}^N$ for $\Lip_b(X)$, with $M \leq N$,
\item the Lip-derivation inequality \eqref{eq_lipderiv} holds, for some $K \geq 1$,
\end{enumerate}
then $(X,d,\mu)$ supports a non-degenerate measurable differentiable structure.
\end{thm}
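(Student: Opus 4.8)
The plan is to run the linear-algebraic machinery of Section~\ref{sect_prelim} so as to turn the generating tuple ${\bf g}$ into \emph{local} coordinates, and then to read off the differential of a Lipschitz function directly from the action of an orthogonal basis of derivations; the Lebesgue differentiation property \eqref{eq_leb} enters to fix canonical pointwise representatives and so to keep the construction measurable. First I would apply Lemma~\ref{lemma_ON} to ${\bf g}$ and to the rank-$M$ module $\U(X,\mu)$, obtaining an orthogonal basis $\{\d^*_j\}_{j=1}^M$ of $\U(X,\mu)$, a finite $\mu$-measurable partition $\{X_l\}_{l=1}^L$ of $X$, and for each $l$ a sub-tuple ${\bf f}^l=(f^l_j)_{j=1}^M$ of ${\bf g}$ for which, $\mu$-a.e.\ on $X_l$, one has $\d^*_i f^l_j=0$ when $i\neq j$, while $\theta_l:=\d^*_1 f^l_1=\cdots=\d^*_M f^l_M$ is nonzero. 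I would then refine each $X_l$ into countably many $\mu$-measurable pieces on which, for a single $k$, $2^{-k-1}<|\theta_l|\le 2^{-k}$ (so $\theta_l$ is bounded away from $0$ and $\infty$) and on which every $\d^*_i g_p$ lies in $L^\infty$ --- the latter by covering $X$ with balls, using Lemma~\ref{lemma_mcshane} to replace each $g_p$ on a ball by a bounded Lipschitz function, and invoking the locality of derivations (Lemmas~\ref{lemma_locality}, \ref{lemma_extlip}). The outcome is a countable $\mu$-measurable partition $\{Y_\a\}_\a$ of $X$ up to a null set, with $\mu(Y_\a)>0$, each $Y_\a$ inside some $X_{l(\a)}$, on which $\xi^\a:={\bf f}^{l(\a)}: X\to\R^M$ is Lipschitz, $|\theta_{l(\a)}|$ is bounded above and below, and all $\d^*_i f^\a_j\in L^\infty(Y_\a)$.

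Next, on each $Y_\a$ and for $f\in\Lip(X)$, I would declare the differential to be
$$
D_\a f(x)\;:=\;\Big(\frac{\d^*_1 f(x)}{\d^*_1 f^\a_1(x)},\ \ldots\ ,\ \frac{\d^*_M f(x)}{\d^*_M f^\a_M(x)}\Big),\qquad x\in Y_\a,
$$
evaluated using the representatives supplied by \eqref{eq_leb}. The denominators are bounded below on $Y_\a$, and the same localisation as above (applied to $f$) puts the numerators in $L^\infty(Y_\a)$, so $D_\a f\in L^\infty(Y_\a;\R^M)$ and $f\mapsto D_\a f$ is linear. Moreover ${\bf d}^*:=\{\d^*_j\}$ is obtained from the hypothesised basis ${\bf d}$ by a change of basis whose matrix is $L^\infty$ on $Y_\a$ and whose determinant is a power of $\theta_{l(\a)}$, hence bounded away from $0$ and $\infty$ there; consequently the inverse change of basis is also $L^\infty(Y_\a)$, and the two-sided Lip-derivation inequality \eqref{eq_lipderiv} transfers from ${\bf d}$ to ${\bf d}^*$ on $Y_\a$: there is $K_\a\ge 1$ with $K_\a^{-1}|{\bf d}^*h(x)|\le\Lip[h](x)\le K_\a|{\bf d}^*h(x)|$ for every $h\in\Lip(X)$ and $\mu$-a.e.\ $x\in Y_\a$.

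It then remains to check \eqref{eq_diffprop} and the uniqueness clause. Fix $\a$ and $f$, set $R:=\|D_\a f\|_{L^\infty(Y_\a;\R^M)}$, and pick a countable dense $\{c^{(k)}\}$ in the closed ball of radius $R$ in $\R^M$. For each $c\in\R^M$ the function $h_c:=f-c\cdot\xi^\a$ is Lipschitz, so the upper half of \eqref{eq_lipderiv} (for ${\bf d}^*$) holds off a $\mu$-null set; intersecting these over the countable family $\{c^{(k)}\}$, together with the full-measure set where the orthogonality relations and the formula for $D_\a f$ hold pointwise, I get a full-measure $E\subset Y_\a$. For $x_0\in E$ put $c_0:=D_\a f(x_0)$. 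Using that $\Lip[\cdot](x_0)$ is sublinear, that $\xi^\a$ is Lipschitz, and that by Lemma~\ref{lemma_ON}(3) $\d^*_i f^\a_i(x_0)=\theta_{l(\a)}(x_0)$ for all $i$ while $\d^*_i f^\a_j(x_0)=0$ for $i\neq j$, one finds for each $k$
$$
\Lip[h_{c_0}](x_0)\;\le\;\Lip[h_{c^{(k)}}](x_0)+\Lambda_\a\,|c_0-c^{(k)}|\;\le\;K_\a\,|\theta_{l(\a)}(x_0)|\,|c_0-c^{(k)}|+\Lambda_\a\,|c_0-c^{(k)}|,
$$
since ${\bf d}^*_i h_{c^{(k)}}(x_0)=\d^*_i f(x_0)-c^{(k)}_i\,\theta_{l(\a)}(x_0)=(c_{0,i}-c^{(k)}_i)\,\theta_{l(\a)}(x_0)$ by the definition of $c_{0,i}$; here $\Lambda_\a:=\sqrt M\,\max_j L(f^\a_j)$. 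Letting $c^{(k)}\to c_0$ gives $\Lip[f-D_\a f(x_0)\cdot\xi^\a](x_0)=0$, i.e.\ \eqref{eq_diffprop}. For uniqueness, a second valid differential $D'$ forces $\Lip[(D'(x)-D_\a f(x))\cdot\xi^\a](x)=0$ a.e.\ by sublinearity; the same density argument applied with the \emph{lower} half of \eqref{eq_lipderiv} and the fixed functions $v\cdot\xi^\a$, $v$ ranging over a countable dense subset of the essential range of $D'-D_\a f$, yields $|{\bf d}^*((D'(x)-D_\a f(x))\cdot\xi^\a)(x)|=0$ a.e., and the orthogonality relations then give $D'=D_\a f$. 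Hence each $(Y_\a,\xi^\a)$ is a chart of dimension $M$; since the $Y_\a$ cover $X$ modulo a null set, have positive measure, have common dimension $M\le N$, and $M\ge 1$, this is a non-degenerate measurable differentiable structure.

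I expect the main obstacle to be precisely the step inside the verification above: the Lip-derivation inequality \eqref{eq_lipderiv} is only asserted $\mu$-a.e.\ for each \emph{fixed} Lipschitz function, whereas \eqref{eq_diffprop} requires control, at $\mu$-a.e.\ point, of the \emph{uncountable} family $\{f-c\cdot\xi^\a : c\in\R^M\}$ simultaneously. The density argument above closes this gap, and it works precisely because $D_\a f$ is (locally) bounded and $\Lip[\cdot](x)$ is sublinear. A secondary technical annoyance is that the generators $g_p$ and the function $f$ need not be bounded, so a priori $\d^*_i g_p$ and $\d^*_i f$ lie only in $L^\infty_{\rm loc}$; this is handled by the preliminary localisation via McShane extensions and the locality of derivations in the first step. (The same scheme, with ${\bf g}$ itself serving as coordinates and the partition of Lemma~\ref{lemma_ON} trivial, is the mechanism behind Theorem~\ref{thm_dimmatch}.)
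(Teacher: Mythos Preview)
Your argument is correct, and it is genuinely different from the paper's proof. The paper proceeds by \emph{approximation}: it first handles compositions $p\circ{\bf g}$ with $p\in P_N$ via a pointwise linear change of variables $T=T_{x_0}$ on $\R^N$ (depending on the density point $x_0$, which is where the Lebesgue differentiation hypothesis \eqref{eq_leb} enters) that kills the ``extra'' $N-M$ coordinates at $x_0$, thereby reducing to the truncated gradient $\nabla^|p$; it then passes to general $u\in\Lip_b(X)$ by piecewise-linear approximation in $\R^N$ and a Mazur-type argument, exactly as in Theorem~\ref{thm_dimmatch}. Your route is more direct: you define $D_\a f$ algebraically from the orthogonal basis, and the density trick over a countable set $\{c^{(k)}\}\subset\R^M$ replaces the entire PL/Mazur machinery. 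This is cleaner and shows that neither the PL approximations nor the point-dependent change of variables $T$ are essential here; in fact, after Lemma~\ref{lemma_ON} you use the generating hypothesis on ${\bf g}$ no further, and the Lebesgue property \eqref{eq_leb} is only used cosmetically to fix representatives rather than substantively as in the paper. On the other hand, the paper's approximation scheme is deliberately written to parallel the later doubling case in \S\ref{sect_doubling}, where piecewise-distance approximations play the role of PL functions, so its extra machinery is an investment that pays off there.

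Two small points worth making explicit in your write-up. First, when you transfer \eqref{eq_lipderiv} from ${\bf d}$ to ${\bf d}^*$, spell out that the change-of-basis matrix $C$ satisfies $C\cdot[{\bf d}{\bf f}^\a]=\theta_{l(\a)}I_M$, so $C=\theta_{l(\a)}[{\bf d}{\bf f}^\a]^{-1}={\rm adj}([{\bf d}{\bf f}^\a])$ and $\det C=\theta_{l(\a)}^{\,M-1}$; the entries of $C$ are $(M-1)\times(M-1)$ minors of $[{\bf d}{\bf f}^\a]$, so bounding them in $L^\infty(Y_\a)$ really does require the preliminary localisation to balls that you describe. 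Second, your uniqueness argument is slightly more efficiently phrased as follows: for each rational $v\in\Q^M$ the lower half of \eqref{eq_lipderiv} (transferred to ${\bf d}^*$) gives $\Lip[v\cdot\xi^\a](x)\ge K_\a^{-1}|\theta_{l(\a)}(x)|\,|v|$ for $\mu$-a.e.\ $x$; intersecting over $\Q^M$ and using that $v\mapsto\Lip[v\cdot\xi^\a](x)$ is $L(\xi^\a)$-Lipschitz in $v$, one gets the same bound for all $v\in\R^M$ at $\mu$-a.e.\ $x$, whence $\Lip[w(x)\cdot\xi^\a](x)=0$ forces $w(x)=0$.
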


The proofs of Theorems \ref{thm_dimmatch} and \ref{thm_fingen-measdiff} use dyadic-cube decompositions of Euclidean spaces, as well as piecewise-linear (PL) extensions of functions.  %We discuss them briefly, below.  
To fix notation, for an $N$-simplex $S$ in $\R^N$, its set of vertices (or $0$-skeleton) is denoted by $S^0$.

\begin{rmk} \label{rmk_pl}
For each closed $N$-simplex $S$, every $f \in \Lip(S^0)$ has a unique linear extension $F : S \to \R$ that satisfies $L(F) = L(f)$.

The same holds for triangulations by closed $N$-simplices $\{S_m\}_{m=1}^\infty$ of $\R^N$.  Indeed, by separately taking linear extensions from $S_m^0$ to $S_m$, each $f \in \Lip\big(\bigcup_{m=1}^\infty S_m^0\big)$ has a unique PL-extension $F \in \Lip(\R^N)$ that also satisfies $L(F) = L(f)$.

For the remainder of this section we will work with a fixed triangulation of $\R^N$, for each $n \in \N$.  Starting with dyadic points in $\R^N$ at scale $2^{-n}$,
$$%\begin{eqnarray*}
V_1 \,:=\, %&:=&
\left\{ 2^{-n}k : k \in \mathbb{Z} \right\} \, \text{ and } \, %\\
V_N \,:=\, %&:=&
\big\{ 
x = (x_1,\ldots,x_N) \in \R^N \,:\, 
x_i \in V_1
\big\},
$$%\end{eqnarray*}
we fix a subdivision on the cube $[0,2^{-n}]^N$ into finitely many closed $N$-simplices $\{S_m\}$, whose union covers the cube and so that every intersection $S_m \cap S_n$ is either the empty set or a lower-dimensional simplex.

Taking translates in the coordinate directions, this determines the desired triangulation of $\R^N$, so as above, every $f \in \Lip(V_N)$ extends to a function on all of $\R^N$ with the same Lipschitz constant.
\end{rmk}

The proof of Theorem \ref{thm_dimmatch} consists of three steps:
\begin{enumerate}
\vspace{.025in}
\item for polynomials, with generators ${\bf g}$ as variables, their (measurable) differentials are equal to Euclidean gradients;
\vspace{.025in}
\item each Lipschitz function on $X$ can be approximated using PL functions on ${\bf g}(X)$, where ${\bf g}$ generates $\Lip_b(X)$;
\vspace{.025in}
\item the differential of every Lipschitz function exists and agrees with the weak-$*$ (sub)limit of Euclidean gradients.
\end{enumerate}
As usual, for $u : \R^n \to \R$, its $i$th partial derivative is $\partial_iu$ and its gradient is $\nabla u$.

\begin{proof}[Proof of Theorem \ref{thm_dimmatch}]
Without loss, $(X,d)$ is bounded; otherwise we fix $x \in X$, partition $X$ into annuli centered about $x$, and prove the theorem for each annulus.

\vspace{.05in}
{\em Step 1: Euclidean gradients}.\  By hypothesis, there is an $N$-tuple ${\bf g} = (g_j)_{j=1}^N$ that generates $\Lip_b(X)$.  As a shorthand, put $x' = {\bf g}(x)$ for $x \in X$.

We claim that $X$ supports a measurable differentiable structure with a single chart, i.e.\ with $Y = X$ and $\xi = {\bf g}$.  As a first case, for compositions $u = p \circ {\bf g}$ with $p \in P_N$, the smoothness of polynomials on $\R^N$ implies that, for $y \in B(x,r)$,
\begin{equation} \label{eq_diffpoly}
\left.\begin{split}
\hspace{.2in}
&\frac{\big|u(y) - u(x) \;-\; \nabla p(g(x)) \cdot [{\bf g}(y)-{\bf g}(x)] \big|}{r} \\ & 
\hspace{.75in} \;=\;
\frac{|p(y') - p(x') - \nabla p (x') \cdot (y'-x')|}{|y'-x'|} 
\, \frac{|y'-x'|}{r} \\ &
\hspace{.75in} \;\leq\;
\frac{|p(y') - p(x') - \nabla p (x') \cdot (y'-x')|}{|y'-x'|} \, L({\bf g}) \;\longrightarrow\; 0
\hspace{.2in}
\end{split}\right\}
\end{equation}
as $r \to 0$.  So for $\mu$-a.e.\ $x \in X$, Equation \eqref{eq_diffprop} holds with
$$
D_m\big(p \circ {\bf g}\big)(x) \;=\; \nabla p ({\bf g}(x)).
$$
\indent {\em Step 2: PL approximations}.\ For the general case, let ${\bf d} := \{\d_k\}_{k=1}^N$ be the orthogonal basis of $\U(X,\mu)$ from Lemma \ref{lemma_ON}.  Moreover, assume that for each index $i$, the function $|\d_ig_i|$ is $\mu$-a.e.\ bounded away from $0$ and $\infty$, by considering sets
$$
X_k^i\;:=\; \{ x \in X \,:\, 2^{-(k+1)} \,\leq\, |\d_ig_i(x)| \,<\, 2^{-k} \}
$$
and replacing each $\d_i$ with
$(\sum_{k=1}^\infty 2^k \chi_{X_k^i}) \d_i$
as necessary.

The Leibniz rule implies that for all $p \in P_N$ and $\d \in \U(X;\mu)$, we have
\begin{equation} \label{eq_derivpoly}
\d_i(p \circ {\bf g	})(x) \;=\; 
\sum_{k=1}^N \partial_kp(x') \, \d_ig_k(x) \;=\; 
\partial_ip(x') \, \d_ig_i(x).
\end{equation}
Fix $u \in \Lip(X)$.  By hypothesis, there exist $\{p_n\}_{n=1}^\infty \subset P_N$ so that $p_n \circ {\bf g} \wsto u$ in $\Lip(X)$.  In particular, $p_n \circ {\bf g}$ converges locally uniformly to $u$.

Since $X$ is bounded, so is ${\bf g}(X)$.  Let $Q$ be a cube with faces parallel to the coordinate planes and that contains ${\bf g}(X)$.  For each $n \in \N$, let $\{ Q^n_{\bf a} \}_{{\bf a} \in \{1, \cdots, n\}^N}$ be an enumeration of the dyadic subcubes of $Q$ with edge-length $2^{-n}$.

For each ${\bf a}$, let $(Q^n_{\bf a})^0$ be the vertices of $Q^n_{\bf a}$, so $(Q^n_{\bf a})^0 \subset V_N$.  For the restriction $p_n|_{(Q^n_{\bf a})^0}$, let $p_n^{\bf a} : Q^n_{\bf a} \to \R$ be its PL-extension to $Q^n_{\bf a}$, as in Remark \ref{rmk_pl}.  Now define
$$
\ell_n(z) \,:=\, \sum_{\bf a} \chi_{Q^n_{\bf a}}(z) \, p_n^{\bf a}(z).
$$
Clearly $\{\ell_n \circ {\bf g}\}_{n=1}^\infty$ converges locally uniformly to $u$ and that
$$
\sup_n L(\ell_n \circ {\bf g}) \;\leq\; \sup_n L(p_n \circ {\bf g}) \;<\; \infty,
$$
so by weak continuity we have $\d_i(\ell_n \circ {\bf g}) \wsto \d_iu$ in $L^\infty(\R^N,{\bf g}_\#\mu)$.

Note that $\ell_n$ is smooth off of a locally-finite union of lower-dimensional simplices, so by Corollary \ref{cor_planezero}, it is differentiable ${\bf g}_\#\mu$-a.e.\ in $Q^n_{\bf a}$.  It also satisfies
\begin{equation} \label{eq_diffzero}
\ell_n(w) - \ell_n(z) - \nabla\ell_n(z) \cdot [w-z] \;=\; 0
\end{equation}
for ${\bf g}_\#\mu$-a.e.\ $z \in Q_{\bf a}^n$ and for all $w$ sufficiently close to $z$; more precisely, it suffices that $z$ and $w$ lie in the same simplex in the triangulation of $\R^N$ at scale $2^{-n}$.

\vspace{.05in}
{\em Step 3: Weak and weak-$*$ sublimits}.\  
Recall that $\ell_n$ is linear on sub-simplices of $Q^n_{\bf a}$, so Equation \eqref{eq_derivpoly} and the locality property imply that
\begin{equation} \label{eq_matchdiff}
\d_i(\ell_n \circ {\bf g})(x) \;=\; \partial_i\ell_n(x') \, \d_ig_i(x)
\end{equation}
holds for ${\bf g}_\#\mu$-a.e.\ $x \in {\bf g}^{-1}(Q_{\bf a}^n)$.
Since $|\d_ig_i|$ is $\mu$-a.e.\ bounded away from zero, Equation \eqref{eq_matchdiff} and Lemma \ref{cor_planezero} imply that $\{\partial_i\ell_n\}_{n=1}^\infty$ is bounded in $L^\infty(\R^N;{\bf g}_\#\mu)$, for each $1 \leq i \leq N$.  By the Banach-Alaoglu theorem, there is a weak-$*$ limit $U_i$ for some subsequence $\{\partial_i\ell_{n_j}\}_{j=1}^\infty$.

The image ${\bf g}(X)$ has finite ${\bf g}_\#\mu$-measure, so %the usual inclusion 
$L^q(\R^N,{\bf g}_\#\mu) \subset L^1(\R^N,{\bf g}_\#\mu)$
holds and each $U_i$ is a weak limit of $\{\partial_i\ell_{n_j}\}_{j=1}^\infty$ in $L^q(\R^N,{\bf g}_\#\mu)$ as well.  This function space is reflexive, so by Mazur's lemma there are finite convex combinations 
$$
l_m \;:=\; \sum_{j=m}^{M(m)} \lambda_{mj} \ell_{n_j}
$$
whose partial derivatives $\{\partial_il_m\}_{m=1}^\infty$ converge in $L^q$-norm to $U_i$, and hence a further subsequence converges ${\bf g}_\#\mu$-a.e.\ to $U_i$.

By repeating a similar argument on further subsequences and re-indexing, we may assume that $\{\d_i(l_m \circ {\bf g})\}_{m=1}^\infty$ also converges pointwise $\mu$-a.e.\ to $\d_iu$ as well.

Now put $U := (U_1,\cdots,U_n)$, let $\e > 0$ be given, and let $K$ be the constant from \eqref{eq_lipderiv}.  Choose $m = m(x,\e) \in \N$ so that following inequalities hold:
\begin{equation} \label{eq_smallderiv}
|U(x') - \nabla l_m(x')| \;<\; \frac{\e}{2L({\bf g})} \, \text{ and } \,
\sum_{i=1}^n \big|\d_i[u - l_m \circ {\bf g}](x)\big| \;<\; \frac{\e}{4K}.
\end{equation}
%\begin{eqnarray}
%\notag
%|U(x') - \nabla l_m(x')| &<& \frac{\e}{2L({\bf g})} \\
%\label{eq_smallderiv}
%\sum_{i=1}^n \big|\d_i[u - l_m \circ {\bf g}](x)\big| &<& \frac{\e}{4K}.
%\end{eqnarray}
In particular, if $\Lip[u - l_m \circ {\bf g}](x) = 0$ holds for all but finitely many indices $m$, then for sufficiently small $r = r(\e,m,x) > 0$, we have
$$
\frac{\left|\big[u - l_m \circ {\bf g}\big](y) - \big[u - l_m \circ {\bf g}\big](x)\right|}{r} \;\leq\;
\frac{\e}{2}
$$
Otherwise, Equation \eqref{eq_smallderiv} and the Lip-derivation inequality \eqref{eq_lipderiv} imply that an analogous choice $r = r(\e,m,x) > 0$ leads to a similar estimate, for $y \in B(x,r)$:
\begin{eqnarray*}
\frac{\left|\big[u - l_m \circ {\bf g}\big](y) - \big[u - l_m \circ {\bf g}\big](x)\right|}{r} &\leq&
2 \Lip[u - l_m \circ {\bf g}](x) \\ &\leq&
2K \sum_{i=1}^n \big|\d_i[u - l_m \circ {\bf g}](x)\big| \;<\;
\frac{\e}{2}.
\end{eqnarray*}
Since $m \in \N$ is now fixed, take $r > 0$ smaller as necessary so that $x'$ and $y'$ lie in the same simplex with respect to the fixed triangulation of $\R^N$ at scale $2^{-m}$.  Equation \eqref{eq_diffzero} applies to $x',y' \in \R^N$, so from this and the above inequalities, we obtain
\begin{equation*} %\label{eq_triangleineq}
\begin{split}
&\frac{\big|u(y) - u(x) - U\big(x'\big) \cdot [y' - x']\big|}{d(x,y)} \\ &
\hspace{.1in} \;\leq\;
\frac{\big|u(y) - u(x) - [l_m(y') - l_m(x')]\big|}{d(x,y)} \;+\;
\frac{\big|l_m(y') - l_m(x') - \nabla l_m(x') \cdot [y' - x']\big|}{d(x,y)} \\ & 
\hspace{2.34in} \;+\;
\big|U(x') - \nabla\l_m(x')\big| \, \frac{|{\bf g}(y)-{\bf g}(x)|}{d(x,y)} \\ &
\hspace{.1in} \;\leq\;
\frac{\left|\big[u - l_m \circ {\bf g}\big](y) - \big[u - l_m \circ {\bf g}\big](x)\right|}{d(x,y)}
\;+\; 0 \;+\; 
L({\bf g}) \, \big|U(x') - \nabla\l_m(x')\big| \\ &
\hspace{.1in} \;<\; \frac{\e}{2} + \frac{\e}{2} \;=\; \e
\end{split}
\end{equation*}
whenever $x \in {\bf g}^{-1}(Q_{\bf a}^n)$, so Equation \eqref{eq_diffprop} therefore follows with $Du = U \circ {\bf g}$.

By construction, $\partial_i\ell_n$ and $\partial_il_m$ have the same weak-$*$ limits in $L^\infty(\R^N,{\bf g}_\#\mu)$, so from Equation \eqref{eq_matchdiff} and the definition of ${\bf g}_\#\mu$, the differential becomes
$$
\hspace{1.5in}
Du \;=\; (\d_1g_1)^{-1}{\bf d}u.
%\Big( \frac{\d_1u}{\d_1g_1}, \cdots, \frac{\d_Nu}{\d_Ng_N} \Big).
\hspace{1.5in} \qedhere
$$
\end{proof}

The general case follows a similar idea.  If $M = {\rm rank}[\U(X,\mu)]$ is strictly smaller than the number of generators for $\Lip_b(X)$, then by applying a local ``change of variables,'' appropriate subsets of $M$ generators can nonetheless be used as coordinates for a measurable differentiable structure.

For the sake of clarity, the argument is again divided into several steps: one handles PL approximations of Lipschitz functions, and the other gives the explicit change-of-variables technique.

\begin{proof}[Proof of Theorem \ref{thm_fingen-measdiff}]
As given in Lemma \ref{lemma_ON}, let ${\bf d} := \{\d_i^*\}_{i=1}^M$ be an orthogonal basis of $\U(X,\mu)$ and $\{X_l\}_{l=1}^L$ a measurable partition of $X$.  It suffices to construct a measurable differentiable structure on each $X_l$, so without loss we will suppress the index $l$ and write $X = X_l$ and $f_j = f^l_j$, etc.

Up to reindexing, assume that $g_j = f_j$, for $1 \leq j \leq M$, and write the tuples as 
$$
{\bf f} \;:=\; (f_j)_{j=1}^M : X \to \R^M \, \text{ and } \, 
{\bf g} \;:=\; (g_j)_{j=1}^N : X \to \R^N.
$$

{\em Step 1:\ Change of Variables}.\ Fix a composition $u = p \circ {\bf g}$, for $p \in P_N$.  By \eqref{eq_diffpoly}, $u$ satisfies \eqref{eq_diffprop} as before, with measurable differential $Du = (\nabla p) \circ {\bf g}$.

Since $\mu$ satisfies the Lebesgue differentiation property \eqref{eq_leb}, it follows that $\mu$-almost every $x \in X$ is a density point of the matrix-valued functions 
$$
{\bf d g}(x) \;=\; 
[\d_i g_j(x)]_{i=1,j=1}^{M,N} \, \text{ and } \,
{\bf d}(p \circ {\bf g})(x) \;=\; 
[\d_i(p \circ g_j)(x)]_{i=1,j=1}^{M,N}.
$$
%\begin{eqnarray*}
%{\bf d g}(x) &=& 
%[\d_i g_j(x)]_{i=1,j=1}^{M,N}, \\
%{\bf d}(p \circ {\bf g})(x) &=& 
%[\d_i(p \circ g_j)(x)]_{i=1,j=1}^{M,N}.
%\end{eqnarray*}
Fixing such a point $x_0 \in X$, define a linear map $T = (T_1, \cdots, T_N)$ on $\R^N$ by
\begin{equation} \label{eq_changecoords}
T_j(z_1, \cdots, z_N) \;:=\; 
\begin{cases}
z_j, & \text{if } j \leq M \\
\d_1g_1(x_0)\, z_j - \displaystyle{ \sum_{i=1}^M \d_ig_j(x_0) } \, z_i, & \text{if } M < j \leq N.
\end{cases}
\end{equation}
Since $T$ is invertible by Lemma \ref{lemma_ON}, the tuple $\g := T \circ {\bf g}$ also generates $\Lip_b(X)$.  So with $\p := p \circ T^{-1}$, the same fixed Lipschitz function 
$$
u \;=\; p \circ {\bf g} \;=\; \p \circ \g
$$
satisfies \eqref{eq_diffprop} with differential
$x \mapsto \nabla\p(\g(x))$
and with coordinates $\g$ on $X$.  However, at $x = x_0$ the matrix representation for ${\bf d}\g$ is
\begin{equation} \label{eq_derivjacobi}
{\bf d}\g(x_0) \;=\; [\d_i\g_j(x_0)]_{i=1,j=1}^{M,N} \;=\;
\d_1g_1(x_0) \begin{bmatrix}
I_M \\
O
\end{bmatrix}
\end{equation}
where $O$ is the $M \times (N-M)$ zero matrix and $I_M$ is the $M \times M$ identity matrix.  In particular, ${\bf d}\g_j(x_0) = 0$ holds for $j > M$, so \eqref{eq_lipderiv} implies
$$
\Lip[\g_j](x_0) \;=\; 0.
$$
Putting $\nabla^|p := (\partial_1p, \cdots, \partial_Mp)$ for the truncated gradient, we compute
%\eqref{eq_diffprop} becomes
\begin{equation*}
\begin{split}
&
\frac{u(y) - u(x_0) - \nabla^|p(\g(x_0)) \cdot [{\bf f}(y) - {\bf f}(x_0)]}{d(x_0,y)} \\ &
%\hspace{.05in} \;=\;
%\frac{u(y) - u(x_0)}{d(x_0,y)} \;-\; 
%\sum_{j=1}^N \partial_jp(\g(x_0)) \, \frac{\g_j(y) - \g_j(x_0)}{d(x_0,y)} \\ & 
%\hspace{1.1in} \;+\; 
%\sum_{j=M+1}^N \partial_jp(\g(x_0)) \, \frac{\g_j(y) - \g_j(x_0)}{d(x_0,y)} \\ &
\,=\;
\frac{u(y) - u(x_0) - \nabla\p(\g(x_0)) \cdot [\g(y) - \g(x_0)]}{d(x_0,y)} \,+\,
\sum_{j=M+1}^N \partial_jp(\g(x_0)) \frac{\g_j(y) - \g_j(x_0)}{d(x_0,y)}
\end{split}
\end{equation*}
where the first step follows from $\p_j = p_j$ and $f_j = \g_j$, for $1 \leq j \leq M$.
Taking limes superior, the previous identities imply that
\begin{equation*}
\begin{split}
&
\limsup_{y \to x_0} \frac{\big|u(y) - u(x_0) - \nabla^|p(\g(x_0)) \cdot [{\bf f}(y) - {\bf f}(x_0)]\big|}{d(x_0,y)} \\ &
\hspace{.3in} \;=\;
\Lip\big[u - \nabla^|p(\g(x_0)) \cdot {\bf f}\big](x_0) \\ &
\hspace{.3in} \;\leq\;
\Lip\big[u - \nabla\p(\g(x_0)) \cdot \g\big](x_0) \,+\,
\sum_{j=M+1}^N |\partial_jp(\g(x_0))| \, \Lip\big[\g_j(x_0)\big] \;=\; 0 + 0.
\end{split}
\end{equation*}
To summarize, at $x =x_0$  the vectorfield $x \mapsto \nabla^|p (\g(x))$ satisfies the role of the differential under lower-dimensional coordinates ${\bf f}$, with
$$
Du(x_0) \;=\;
D(p \circ {\bf g})(x_0) \;=\;
\nabla^|p (\g(x_0)).
$$

\vspace{.05in}
{\em Step 2:\ PL approximations}.\ We now sketch an argument similar to Step 2 of Theorem \ref{thm_dimmatch}.
Briefly, every $u \in \Lip_b(X)$ can be weak-$*$ approximated by a sequence $\{\ell_k \circ g \}_{k=1}^\infty$, where each $\ell_k : \R^N \to \R$ is piecewise-linear.  This implies that 
$$
\d_i(\ell_k \circ g) \wsto \d_iu
$$
in $L^\infty(X;\mu)$, for $1 \leq i \leq M$.  By a Mazur-type argument as before, we may assume that the convergence is $\mu$-a.e.\ pointwise.  Fixing such a point $x_0 \in X$ and with $T$ as in \eqref{eq_changecoords}, the approximants also fit a change of variables of the form
$$
\mathfrak{l}_k \circ \g \;:=\; (\ell_k \circ T^{-1}) \circ (T \circ g) \;=\; \ell_k \circ {\bf g},
$$
so using \eqref{eq_derivjacobi}, the pointwise convergence can be rewritten as
\begin{eqnarray*}
\d_1f_1(x_0) \, \nabla^|\ell_k(\g(x_0)) &=&
\nabla\mathfrak{l}_k(\g(x_0)) \cdot {\bf d}\g(x_0) \\ &=&
{\bf d}(\mathfrak{l}_k \circ \g)(x_0) \;=\;
{\bf d}(\ell_k \circ {\bf g})(x_0) \;\to\; {\bf d}u(x_0)
\end{eqnarray*}
and \eqref{eq_diffprop} follows similarly as in Step 3 of Theorem \ref{thm_dimmatch}, with differential 
$$
Du(x_0) \;:=\; [\d_1f_1(x_0)]^{-1} {\bf d}u(x_0). 
$$
Note that if condition (2) holds on $X$, then there must exist $f \in \Lip(X)$ so that $\Lip[f]$ is positive on a set of positive $\mu$-measure.  It follows that any measurable differentiable structure on $X$ must be non-degenerate.
\end{proof}

\section{The Case of Doubling Measures} \label{sect_doubling}

We begin with a few useful facts about doubling measures on metric spaces, and then proceed with the proof of Theorem \ref{thm_doublingdiff}.

\subsection{Doubling metric spaces}

Recall that if $\mu$ is $\kappa$-doubling on $(X,d)$, then $X$ is {\em (metrically) $N$-doubling} with $N = N(\kappa) \in \N$: this means that every ball $B(x,r)$ in $X$ can be covered by at most $N$ balls with radius $r/2$ and centers in $B(x,r)$.  Moreover, this geometric condition gives rise to good approximation properties for Lipschitz functions, just as in the case of $\R^n$.  Using these approximations, the proof ideas of Theorems \ref{thm_dimmatch} and \ref{thm_fingen-measdiff} naturally extend to the metric space setting.

Indeed, by taking $\e$-nets\footnote{The notion of an $\e$-net from metric geometry should not be confused with nets, as in Remark \ref{rmk_net}, which are generalised sequences that detect convergence.} on a doubling metric space, one may construct analogues of piecewise-linear approximations of Lipschitz functions, in the weak-$*$ sense of Lemma \ref{lemma_dualbanach}.  Note that similar techniques have been used before by Semmes \cite[Eq.\ B.6.24]{Gromov:book} and Keith \cite[Defn 4.1]{Keith-distvectors} but with different applications.

To begin, recall that on a metric space $(X,d)$, a {\em (maximally separated) $\e$-net}, for $\e > 0$, is a subset $[X]_\e \subset X$ with the property that, for some $C \geq 1$, 
\begin{itemize}
\item every $x \in X$ satisfies $d(x,x') \leq C\e$, for some $x' \in [X]_\e$;
\item if $x',x'' \in [X]_\e$ with $x' \neq x''$, then $d(x',x'') \geq \e$.
\end{itemize}

Such $\e$-nets always exist for doubling spaces \cite{Christ}, \cite[Lemma B.7.3]{Gromov:book}.

\begin{defn} %\label{rmk_doublingpiecewise}
Let $(X,d)$ be a metric space and let $\e > 0$.  If $[X]_\e$ is an $\e$-net of $X$, then for $u \in \Lip(X)$, the function
$$
[u]_\e(x) \;:=\; \inf \big\{ u(x') + L(u) \cdot d(x,x') \,:\, x' \in [X]_\e \big\}
$$
is called the {\em piecewise-distance approximation of $u$ (with respect to $[X]_\e$)}.
\end{defn}

\begin{rmk} \label{rmk_gengenerators}
It is clear that the approximations $\{u_\e\}_{\e > 0}$ are each $L(u)$-Lipschitz and converge uniformly to $u$, as $\e \to 0$.  As a consequence, distance functions 
$$
d_{x'}(x) \;:=\; d(x,x'), \, \text{ for } \, x' \in [X]_\e
$$
form generating sets for $\Lip_b(X)$, in a generalised sense.  Moreover, $\d u$ is the weak-$*$ limit of (locally finite) sums of ${\d}d_{x'}$, for each $\d \in \U(X,\mu)$.  The proof of Lemma \ref{lemma_ON} therefore applies to linearly independent sets in $\U(X,\mu)$ in this setting; compare with \cite[Rmk 7.3.2]{Keith} and \cite[Cor 6.28]{Schioppa}.
\end{rmk}

\subsection{Rank Bounds for Derivations}

As discussed in \S1, not every doubling metric space admits a bi-Lipschitz embedding into some $\R^n$. Assouad's embedding theorem \cite{Assouad} asserts, however, that a weaker statement holds true.  The formulation below is due to Naor and Neiman \cite{Naor:Neiman}, where the embedding dimension is independent of ``snowflaking.''

\begin{thm}[Assouad, Naor-Neiman] \label{thm_assouad}
Let $(X,d)$ be $N$-doubling for some $N \in \N$.  For each $s \in (0,1)$, there is an embedding $\zeta : X \to \R^n$ so that
$$
K^{-1}\,d(x,y)^s \;\leq\;
|\zeta(y) - \zeta(x)| \;\leq\;
K\,d(x,y)^s
$$
holds, for all $x,y \in X$.  Here $n = n(N) \in \N$ and $K = K(s,N) \geq 1$.
\end{thm}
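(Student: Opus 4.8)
The plan is to build $\zeta$ as a \emph{superposition of single-scale gadgets}, weighted by the snowflake factors $2^{-js}$, following the randomized construction of Naor and Neiman. For each $j \in \mathbb{Z}$ choose a maximal $2^{-j}$-separated net $N_j \subset X$; the $N$-doubling hypothesis guarantees that each ball of radius $2^{-j}$ meets only $N$-boundedly many points of $N_j$, which is the sole structural input. The goal is the two-sided estimate $K^{-1} d(x,y)^s \le |\zeta(x)-\zeta(y)| \le K\, d(x,y)^s$, and the two genuine difficulties are (i) preventing the contributions of infinitely many scales from interfering destructively, so that the lower bound survives, and (ii) assembling all scales into a \emph{fixed} number $n = n(N)$ of coordinates, independent of $s$.

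For the single-scale gadget I would, at each scale $j$, produce a random \emph{padded partition} of $X$ into clusters of diameter $\approx 2^{-j}$ enjoying the property---available on doubling spaces with padding constant depending only on $N$---that each point lies, with definite probability, at distance $\gtrsim 2^{-j}$ from the boundary of its cluster. To each cluster I attach an independent standard Gaussian vector in $\R^D$, with $D = D(N)$, and set $f_j(x) := \phi_j(x)\, G_j(x)$, where $\phi_j$ is the $O(2^j)$-Lipschitz padding function and $G_j(x)$ is the label of the cluster of $x$. Then $f_j$ is $O(2^j)$-Lipschitz, and whenever $d(x,y) \approx 2^{-j}$ the points $x,y$ lie, with probability bounded below, in distinct well-padded clusters, so that $|f_j(x)-f_j(y)|$ is a nondegenerate difference of independent Gaussians; in mean square, $\mathbb{E}\,|f_j(x)-f_j(y)|^2 \gtrsim 1$.

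I would then define $\zeta(x) := \sum_{j} 2^{-js} f_j(x)$, mapping \emph{all} scales into one common block of $D = D(N)$ coordinates. The upper bound is routine: the coarse scales with $2^{-j} \gtrsim d(x,y)$ contribute $\lesssim \sum 2^{j(1-s)} d(x,y)$ and the fine scales contribute $\lesssim \sum 2^{-js}$, both geometric and comparable to $d(x,y)^s$ (the fine-scale constant degenerating as $s \to 0$, which is why $K$ is permitted to depend on $s$). For the lower bound the decisive structural fact is that the Gaussian labels are \emph{independent and mean zero across scales}, so the cross terms vanish in expectation and $\mathbb{E}\,|\zeta(x)-\zeta(y)|^2 = \sum_j 2^{-2js}\, \mathbb{E}\,|f_j(x)-f_j(y)|^2$ regardless of how the scales are packed into coordinates. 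The dominant scale $j_0 \approx \log_2(1/d(x,y))$ alone contributes $\gtrsim 2^{-2 j_0 s} \approx d(x,y)^{2s}$, with no systematic cancellation. This is precisely why reusing a single bounded coordinate block is harmless and the dimension depends only on $N$, never on $s$.

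The hard part is passing from this per-pair second-moment estimate to a \emph{single} realization of $\zeta$ that is bi-Lipschitz for all pairs simultaneously. Here I would invoke Gaussian concentration: $|\zeta(x)-\zeta(y)|$ concentrates about its mean with subgaussian tails, so a union bound over a sufficiently fine net of pairs---whose cardinality is controlled because doubling spaces have bounded metric entropy at every scale---shows the two-sided estimate holds off an event of small probability, and the Lipschitz continuity of $\zeta$ extends it from the net to all pairs. Passing to several independent copies of the block if needed (multiplying $D(N)$ only by a constant depending on $N$) sharpens the concentration, and a positive-probability realization then furnishes the embedding with $n = n(N)$ and $K = K(s,N)$. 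This argument runs directly for every $s \in (0,1)$, since the relevant geometric sums converge for all such $s$; alternatively one may first treat $s \in [1/2,1)$ and reduce the remaining range by composing with a bounded-dimensional snowflake embedding of the Euclidean image, the Gaussian mechanism keeping the dimension tied only to $N$ throughout.
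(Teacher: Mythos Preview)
The paper does not prove this theorem at all: it is quoted as a known result (attributed to Assouad \cite{Assouad} with the dimension refinement due to Naor and Neiman \cite{Naor:Neiman}) and used as a black box in the proof of Lemma~\ref{lemma_doublingbd2}. There is therefore no ``paper's own proof'' to compare your attempt against.

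That said, your sketch is a reasonable outline of the Naor--Neiman strategy, and you have correctly identified the decisive idea: because the random labels are independent and centred across scales, the second moment $\mathbb{E}\,|\zeta(x)-\zeta(y)|^2$ splits as a sum over scales with no cross terms, so all scales can be packed into a single coordinate block of size $D(N)$ without destructive interference. You are also right that the passage from the per-pair estimate to a single good realisation is the delicate step and that it goes through a concentration-plus-net argument exploiting the bounded metric entropy of doubling spaces. Two points where your sketch is thinner than the actual proof: first, Naor and Neiman do not use Gaussian labels but a more carefully tailored random assignment, and making the Gaussian version work requires some care with the tails when running the union bound; second, the sentence ``passing to several independent copies of the block if needed'' hides real work, since one must check that the number of copies needed for sufficient concentration is bounded in terms of $N$ alone and does not secretly depend on $s$ through the entropy count. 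Your closing remark about reducing to $s\in[1/2,1)$ and composing with a Euclidean snowflake is indeed how the small-$s$ regime is handled in \cite{Naor:Neiman}.
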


Similarly to the case of spaces $(X,d,\mu)$ with finitely-generated Lipschitz algebras, the doubling condition gives rise to an upper bound for the rank of $\U(X,\mu)$.  We formulate this below as a quantitative version of Lemma \ref{lemma_doublingbd}.

\begin{lemma} \label{lemma_doublingbd2}
Let $N \in \N$.  If $(X,d)$ is a $N$-doubling metric space, then there exists $M = M(N) \in \N$ so that for every Radon measure $\mu$ on $X$, the module $\U(X,\mu)$ has rank at most $M$.
%$$
%{\rm rank}[\U(X,\mu)] \;\leq\; M.
%$$
\end{lemma}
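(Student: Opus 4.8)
The plan is to snowflake $X$, embed it into a fixed-dimensional Euclidean space by Assouad's theorem, and transport a rank bound for derivations on $\R^n$ back to $X$ using Lipschitz approximations of the embedding and of its inverse. First I would reduce to the case that $(X,d)$ is bounded: fixing a basepoint $x_0$, decomposing $X$ into annuli $A_k = \{2^{k-1}\le d(x_0,\cdot) < 2^k\}$ (perturbing the radii so the bounding spheres are $\mu$-null), each $A_k$ is bounded and doubling with constant depending only on $N$, and by the locality identity $\U(A_k,\mu\vert_{A_k}) = \{\chi_{A_k}\d : \d\in\U(X,\mu)\}$ (Lemma \ref{lemma_locality}) a rank bound valid on every $A_k$ patches, via disjoint supports, to the same bound on $X$. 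So assume $X$ bounded. Fix $s\in(0,1)$ and apply Theorem \ref{thm_assouad}: there is $n = n(N)$ and a bi-Hölder embedding $\zeta = (\zeta_1,\dots,\zeta_n): X\to\R^n$, with $K^{-1}d(x,y)^s \le |\zeta(x)-\zeta(y)| \le K d(x,y)^s$. Put $Z := \zeta(X)$, which is bounded. The key elementary observation is that the inverse $\eta := \zeta^{-1}: Z\to X$ is \emph{Lipschitz}: from the lower bound and $\diam Z < \infty$, $d(\eta(z),\eta(w)) \le K^{1/s}(\diam Z)^{(1-s)/s}\,|z-w|$. Consequently every $f\in\Lip_b(X)$ factors as $f = \pi_f\circ\zeta$, where $\pi_f\in\Lip(\R^n)$ is a McShane extension (Lemma \ref{lemma_mcshane}) of $f\circ\eta\in\Lip(Z)$, with $L(\pi_f)\le L(\eta)\,L(f)$; thus the $\zeta_j$ serve as ``generators'' of $\Lip_b(X)$. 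The goal is to show $\operatorname{rank}\U(X,\mu)\le n$, so that $M(N) := n(N)$ works.

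The obstruction is that $\zeta$ itself is not $d$-Lipschitz --- indeed snowflaked spaces support no nonzero derivations --- so neither $\d\zeta_j$ nor the pushforward of $\d$ along $\zeta$ makes sense. To get around this I approximate the embedding: for $\lambda > 0$ set $\zeta^\lambda := (\zeta_1^\lambda,\dots,\zeta_n^\lambda)$ with $\zeta_j^\lambda(x) := \inf_{y\in X}(\zeta_j(y) + \lambda d(x,y))$, so each $\zeta_j^\lambda\in\Lip_b(X)$ is $\lambda$-Lipschitz and $\|\zeta^\lambda - \zeta\|_\infty = O(\lambda^{-s/(1-s)})\to 0$. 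Now $\zeta^\lambda$ is a genuine $\R^n$-valued bounded Lipschitz map, so $\zeta^\lambda_\#\d\in\U(\R^n,\zeta^\lambda_\#\mu)$ is defined by Lemma \ref{lemma_pushfwd} with no injectivity assumed; this is the ``approximation of the embedding.'' The ``approximation of the inverse'' is $\eta$ itself (already Lipschitz), which is what lets us compare $\zeta^\lambda\circ\eta : Z\to\R^n$ with $\mathrm{id}_Z$. Finally, by Lemma \ref{lemma_chainrule} (argued via cofactors as in Lemma \ref{lemma_nonsing}), $\U(\R^n,\nu)$ has rank at most $n$ for \emph{every} Radon measure $\nu$, since a derivation there is determined by its action on $x_1,\dots,x_n$.

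For the transfer, suppose toward a contradiction that $\d_1,\dots,\d_{n+1}\in\U(X,\mu)$ are linearly independent. For each $\lambda$, the $n+1$ pushforwards $\zeta^\lambda_\#\d_i$ sit inside the rank-$\le n$ module $\U(\R^n,\zeta^\lambda_\#\mu)$, hence are linearly dependent; by the cofactor construction of the proof of Lemma \ref{lemma_nonsing} there are $\Lambda_1^\lambda,\dots,\Lambda_{n+1}^\lambda$, built from the $n\times n$ minors of $[(\zeta^\lambda_\#\d_i)(x_j)]$ and not all zero, with $\sum_i\Lambda_i^\lambda\,(\zeta^\lambda_\#\d_i) = 0$; normalize so that $\sum_i\|\Lambda_i^\lambda\|_\infty = 1$. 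Setting $\d^\lambda := \sum_i(\Lambda_i^\lambda\circ\zeta^\lambda)\,\d_i\in\U(X,\mu)$, the pushforward identity gives $\int_X(\varphi\circ\zeta^\lambda)\,\d^\lambda(\pi\circ\zeta^\lambda)\,d\mu = 0$ for all $\pi\in\Lip(\R^n)$ and $\varphi\in L^1(\zeta^\lambda_\#\mu)$ --- i.e.\ $\d^\lambda$ annihilates each $\zeta_j^\lambda$ when tested against $\zeta^\lambda$-fiber functions. I then pass to weak-$*$ subsequential limits $\lambda_i$ of $\Lambda_i^\lambda\circ\zeta^\lambda$ in $L^\infty(X,\mu)$ --- upgrading to $\mu$-a.e.\ convergence by a Mazur-type argument as in the proof of Theorem \ref{thm_dimmatch} --- and set $\d := \sum_i\lambda_i\d_i$. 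Using $\zeta^\lambda\to\zeta$ uniformly together with $f = \pi_f\circ\zeta$, one shows $\d$ annihilates every $f\in\Lip_b(X)$, so $\d = 0$ by weak continuity; on the other hand the quantitative non-degeneracy of the minors, forced by the lower bi-Hölder bound on $\zeta$, keeps the normalized coefficients from collapsing, so $\d\neq 0$. This contradiction gives $\operatorname{rank}\U(X,\mu)\le n(N)$.

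The main obstacle is exactly this limiting step. Because $L(\zeta^\lambda) = \lambda\to\infty$, the ``partial derivatives'' $\d_i\zeta_j^\lambda$, and more generally $\d_i(\pi\circ\zeta^\lambda)$, blow up, so one cannot naively pass $\zeta^\lambda_\#\d_i$ or $\d^\lambda$ to a limit term by term, nor read off $\d f$ from $\d^\lambda(\pi_f\circ\zeta^\lambda)$. The resolution is to track how each quantity scales under the snowflake $d\mapsto d^s$: the growth of $L(\zeta^\lambda)$ is compensated by the shrinking of the fibers of $\zeta^\lambda$ (of $d$-diameter $O(\lambda^{-1/(1-s)})$) and by the lower bi-Hölder bound, so that the fiber-averaged annihilation of the $\zeta_j^\lambda$ persists in the limit while the normalized dependence relations do not degenerate. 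Arranging these two one-sided controls simultaneously --- enough ``generators'' to force $\d = 0$, yet no collapse of coefficients so that $\d\neq 0$ --- together with the correct choice of normalization and subsequence, is where the real work lies.
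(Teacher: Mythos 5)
Your global strategy --- snowflake, Assouad embedding, Lipschitz (inf-convolution/McShane-type) approximations of the embedding, pushforwards without injectivity, and the Euclidean rank bound --- is the same as the paper's, and your reduction to bounded $X$ and the observation that $\zeta^{-1}$ is Lipschitz on a bounded snowflaked image are fine. But the transfer step, which you yourself flag as ``where the real work lies,'' is precisely where the proposal has genuine gaps, and the mechanisms you gesture at do not close them. First, to get $\d=0$ you would need the limiting derivation to kill every $f=\pi_f\circ\zeta$; but $L(\pi_f\circ\zeta^\lambda)\lesssim\lambda\to\infty$, so $\pi_f\circ\zeta^\lambda$ does not converge weak-$*$ in $\Lip_b(X)$ and weak continuity cannot be invoked; moreover, at fixed $\lambda$ the pushforward identity only yields annihilation tested against fiber functions $\varphi\circ\zeta^\lambda$, which are not dense in $L^1(X,\mu)$, and the weak-$*$ limit of the products $(\Lambda_i^\lambda\circ\zeta^\lambda)\,\d_i(\pi\circ\zeta^\lambda)$ is not the product of the limits. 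Second, the claim $\d\neq0$ is unsupported: normalizing $\sum_i\|\Lambda_i^\lambda\|_\infty=1$ does not prevent the weak-$*$ (or Mazur/a.e.) limits of $\Lambda_i^\lambda\circ\zeta^\lambda$ from vanishing, and the lower bi-H\"older bound on $\zeta$ gives no lower bound on the minors of $[(\zeta^\lambda_\#\d_i)x_j]$ from which the $\Lambda_i^\lambda$ are built --- on a snowflaked space these ``derivatives of the embedding'' are exactly the quantities that degenerate.

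The paper avoids both problems by never passing to a limit of dependence relations. It takes distance functions to points of an $\e$-net as generators and uses the orthogonal basis of Lemma \ref{lemma_ON} to build quantitative non-degeneracy into $X$ itself, namely $\det({\bf df})=(\d_1f_1)^M>0$ $\mu$-a.e.; it approximates the generators (not arbitrary $f$) by the composite, localized functions $[f^j\circ\zeta^{-1}]_{B_\e}\circ[\zeta]_\e$ on balls $B_\e=B(x_0,\e)$, which are uniformly Lipschitz because the $\e^{s-1}$ blow-up of $L([\zeta]_\e)$ is cancelled by the $\e^{1-s}$ local Lipschitz constant of $\zeta^{-1}$ on $\zeta(B_\e)$; after a Mazur argument the Jacobians $\det({\bf d}\f_\a)$ converge $\mu$-a.e.\ to $(\d_1f_1)^M$, hence are positive at a density point $x_0$ for one fixed large $\a$. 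At that fixed scale, if the rank exceeded $n$, the pushed-forward Jacobian $\det({\bf d}^\e{\bf h}_\a)$ would vanish $\mu^\e$-a.e., and the weighted pushforward identity (with weights built from the entries of ${\bf d}\f_\a$, using $\U(X,\mu)\cong\U(X,\mu_w)$) forces $\int_{B(x_0,r)}(\psi\circ[\zeta]_\e)\det({\bf d}\f_\a)\,d\mu=0$ for all small $r$, contradicting positivity at the density point. The contradiction is thus obtained at a single scale through determinants, rather than through a $\lambda\to\infty$ limit of normalized coefficients; to complete your argument you would either have to supply the missing compactness and non-collapse for your $\Lambda_i^\lambda$, or restructure the transfer along these lines.
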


\begin{rmk}
The bound $M$ is not sharp in general.  Indeed, the proof gives $M = n$, where $n$ is the target dimension of the Assouad embedding. 
In contrast, Carnot groups are doubling, yet their modules of derivations (with respect to Haar measure) have rank strictly less than $M$ \cite[Thm 39]{WeaverED}.
\end{rmk}

The idea of Lemma \ref{lemma_doublingbd2} is simple but the proof is technical.  For clarity, it is divided into three steps:
\begin{enumerate}
\item By taking piecewise-distance approximations $[\zeta]_\e$ of the Assouad embedding $\zeta$, when restricted to an $\e$-net, derivations on $X$ have well-defined pushforwards on $\R^n$.  

\vspace{.05in}
\item In general, the family $\{[\zeta]_\e\}_{\e > 0}$ is not uniformly Lipschitz.  The nontrivial step is in showing that suitable {\em composite} approximations satisfy 
$$
[u \circ \zeta^{-1}]_\e \circ [\zeta]_\e \;\to\; u
$$
for all Lipschitz functions $u$ on $X$ and in particular, for all generators of $\Lip_b(X)$.
For technical reasons, however, the argument is localised so that the above convergence is applied to points of density.
%and where $u$ is a distance function, treated as a generator of $\Lip_b(X)$.

\vspace{.05in}
\item Since any collection of $n+1$ derivations on $\R^n$ is linearly dependent, so are the pushforwards of the original derivations on $X$.
Using the Chain Rule and Lemma \ref{lemma_pushfwd}, one shows that the corresponding Jacobians on $X$ must vanish, which contradicts Corollary \ref{cor_nonsing}.
\end{enumerate}

\begin{proof}[Proof of Lemma \ref{lemma_doublingbd2} (and \ref{lemma_doublingbd})]
Since $\mu$ is doubling, we may assume by the locality property that $X$ is bounded.  Fix $s \in (0,1)$, let $\zeta : X \to \R^n$ be Assouad's embedding, and put $Y = \zeta(X)$.

\vspace{.025in}
{\em Step 1:\ Piecewise-distance approximations}.\  For each $\e > 0$, let $[X]_\e := \{x_i\}_{i=1}^\infty$ be an  $\e$-net of $X$.  By Theorem \ref{thm_assouad}, the image of $[X]_\e$, denoted by
$$
[Y]_\e \;:=\; \zeta([X]_\e),
$$
is also an $K\e^s$-net of $Y$, for some $K \geq 1$.  Moreover, the restriction $\zeta|_{[X]_\e}$ satisfies
$$
|\zeta(x_i) - \zeta(x_j)| \;\leq\;
K \, d(x_i,x_j)^{s-1}d(x_i,x_j) \;\leq\;
K\e^{s-1} d(x_i,x_j)
$$
for all $i,j \in \N$.  The piecewise-distance approximation $[\zeta]_\e$ of (the components of) this restriction is therefore $\sqrt{n}K\e^{s-1}$-Lipschitz.
% with constant
%\begin{equation} \label{eq_lipassouad}
%L\left([\zeta]_\e\right) \;\leq\; \sqrt{n}K\e^{s-1}.
%\end{equation}

Note that $\zeta^{-1} : Y \to X$ is also locally Lipschitz, in that Theorem \ref{thm_assouad} implies
$$
L(\zeta^{-1}|_A) \;\leq\;
K^\frac{1}{s} \diam(\zeta^{-1}(A))^{1-s}
$$
for all (bounded) subsets $A \subset Y$.
For each $x_0 \in X$, put $B_\e := B(x_0,\e)$.  For each $u \in \Lip(X)$, consider the restriction of $u \circ \zeta^{-1} : Y \to \R$ to $\zeta(B_\e) \cap [Y]_\e$ and let 
$$
[ u \circ \zeta^{-1} ]_{B_\e} : \zeta(B_\e) \to \R
$$
be its piecewise-distance approximation on $\zeta(B_\e)$, which is Lipschitz with constant
$$
L\left( [ u \circ \zeta^{-1} ]_{B_\e} \right) \;\leq\;
%L\big( u \circ \zeta^{-1} \Big|_{\zeta(B_\e) \cap [Y]_\e} \big) \;\leq\;
L(u) \, L\big( \zeta^{-1}\big|_{\zeta(B_\e)} \big) \;\leq\;
%L(u) \, K^\frac{1}{s} \diam(B_\e)^{1-s} \;\leq\;
L(u)K^\frac{1}{s}(2\e)^{1-s}.
$$
This means that
$\{ [ u \circ \zeta^{-1}]_{B_\e} \circ [\zeta]_\e \}_{\e > 0}$
is uniformly $K'$-Lipschitz, with $K' \approx L(u)$ independent of $\e$.  Now define auxiliary functions $\tilde{u}_\e : (X \setminus B_{2\e}) \cup B_\e \to \R$ by
\begin{equation} \label{eq_localapprox}
\tilde{u}_\e \;:=\;
u \chi_{X \setminus B_{2\e}} \,+\, \big( [u \circ \zeta^{-1}]_{B_\e} \circ [\zeta]_\e \big) \chi_{B_\e}.
\end{equation}
We claim that $\{ \tilde{u}_\e \}_{\e > 0}$ is also uniformly Lipschitz, relative to their domains of definition.  It suffices to check pairs $x \in X \setminus B_{2\e}$ and $x' \in B_\e$, for each $\e > 0$, so 
$$
0 \;<\; d(x_0,x') \;\leq\; \e \;\leq\; d(x,x').
$$
Keeping in mind that $x_0 \in X$ satisfies $u(x_0) = \tilde{u}_\e(x_0)$,
%by construction, 
it follows that
\begin{eqnarray*}
\frac{| \tilde{u}_\e(x) - \tilde{u}_\e(x') |}{d(x,x')} &=&
\frac{|u(x) - \tilde{u}_\e(x')|}{d(x,x')} \;\leq\;
\frac{|u(x) - u(x_0)|}{d(x,x')} \,+\, \frac{\big|\tilde{u}_\e(x_0) - \tilde{u}_\e(x')\big|}{\e} \\ &\leq&
\frac{|u(x) - u(x_0)|}{d(x,x_0)} \,+\, L\big(\tilde{u}_\e\big|_{B_\e}\big) \\ &\leq&
L(u) \,+\, L\Big( [u \circ \zeta^{-1}]_{B_\e} \circ [\zeta]_\e\Big|_{B_\e} \Big) \;\leq\;
L(u) \,+\, K'.
\end{eqnarray*}
The claim now settled, let $u_\e : X \to \R$ be the McShane extension of $\tilde{u}_\e$.  By the previous argument, it follows that $\{u_\e\}_{\e > 0}$ is uniformly Lipschitz and converges uniformly to $u$, so $u_\e \wsto u$ in $\Lip_b(X)$.

\vspace{.05in}
{\em Step 2:\ Embeddings and Jacobians}.\
Since $X$ is bounded, each $\e$-net of $X$ from before becomes a finite set $[X]_\e = \{x_i\}_{i=1}^{N(\e)}$.  Taking distance functions
$$
g_i(x) \;:=\; d(x_i,x),
$$
the conclusions of Corollary \ref{cor_nonsing} and Lemma \ref{lemma_ON} apply to the set ${\bf g} := \big\{(g_i)_{i=1}^{N(\e)}\big\}_{\e>0}$, as indicated before in Remark \ref{rmk_gengenerators}.

%As in Lemma \ref{lemma_ON}, l
Let ${\bf d} := \{\d_i^*\}_{i=1}^M$ be an orthogonal basis of $\U(X,\mu)$ with respect to ${\bf g}$ and let $\{X_l\}_{l=1}^\infty$ be the associated partition of $X$.  For clarity, we suppress the symbols $*$ and $l$, so $\d_i = \d_i^*$, and ${\bf f} := \{f^j\}_{j=1}^M$ denotes the subset of ${\bf g}$ on $X := X_l$, from Lemma \ref{lemma_ON}.  In particular, property (3) of that lemma implies the $\mu$-a.e.\ identity
\begin{equation} \label{eq_jacobianid}
\det({\bf d f}) \;=\; (\d_1f_1)^M \;>\; 0,
\end{equation}
where if necessary, $\d_1$ is replaced with $(\chi_{\{\d_1f_1 > 0\}} - \chi_{\{\d_1f_1 < 0\}})\d_1$.

Fix a sequence of scales $\e = 2^{-\a}$, for $\a \in \N$.  As in Formula \eqref{eq_localapprox} in Step 1, for $u = f^j$ consider analogous sequences of functions
$$
\tilde{f}^j_\a \;:=\;
%\tilde{u}_{2^{-\a}} \;=\;
f^j \chi_{X \setminus B_{2\e}} \,+\,
\big( [f^j \circ \zeta^{-1}]_{B_\e} \circ [\zeta]_{2^{-\a}} \big) \chi_{B_\e}
$$
and let $f^j_\a$ denote the McShane extension of $\tilde{f}^j_\a$.  Clearly $\{f^j_\a\}_{\a=1}^\infty$ converges weak-$*$ to $f^j$ in $\Lip_b(X)$, so $\d_if^j_\a \wsto \d_if^j$ in $L^\infty(X;\mu)$ for each $1 \leq i,j \leq M$.

Since $\mu(X) < \infty$, for every $q \in (1,\infty)$ we have $\d_if^j_\a \rightharpoonup \d_if^j$ (i.e.\ weakly) in $L^q(X;\mu)$.  So by a Mazur-type argument as before, there exist convex combinations 
$$
\f^j_\a \,:=\, \sum_{\b=\a}^\infty c_{\a\b}f^j_\b \, \textrm{ and } \,
\f_\e \;:=\; \f_\a \,:=\, (\f^1_\a, \cdots, \f^M_\a)
$$
%as $k \to \infty$, so 
where $\d_i\f_\a^j \to \d_if^j$ holds $\mu$-a.e.\ on $X$, as well as
$\det( {\bf d}\f_\a ) \to \det( {\bf df} )$.
Equation \eqref{eq_jacobianid} therefore implies that, for points $x \in X$ of $\mu$-density for $\det({\bf d}\f_\a)$, we have
\begin{equation} \label{eq_jacobianpos}
0 \;<\;
\det( {\bf d}\f_\a(x) ) \;=\;
\lim_{r \to 0} \frac{1}{\mu(B(x,r))}\int_{B(x,r)} \det{\bf d}\f_\a \, d\mu
\end{equation}
whenever $\a$ is sufficiently large.  In particular, this applies to points in $X$ that are $\mu$-density points of $\d_ig_j$, for all $i$ and $j$ simultaneously.

\vspace{.05in}
{\em Step 3:\ Factoring Jacobians via pushforwards}.\
Since the measurable functions
$$
\big\{ \{\d_ig_j\}_{i=1,j=1}^{M,N(2^{-\a})} \big\}_{\a=1}^\infty
$$
form a countable set, the intersection of their $\mu$-density points has full measure in $X$; fix such a point $x=x_0$.

Towards a contradiction, suppose that $M > n$.  For $\mu^\e := ([\zeta]_\e)_\#\mu$, consider pushforward derivations $\d_i^\e := ([\zeta]_\e)_\#\d_i $ in $\U(\R^n,\mu^\e)$.  Lemma \ref{lemma_pushfwd} then gives
\begin{equation} \label{eq_pushfwdconv}
\int_{\zeta(B_\e)} \psi \, \d_i^\e[\f^j \circ \zeta^{-1}]_{B_\e} d\mu^\e \;=\;
\int_{B_\e} (\psi \circ [\zeta]_\e) \, \d_i\f_\e^j \, d\mu 
%\;\to\;
%\int_{B_\e} (\psi \circ \zeta) \, \d_i\f^j \, d\mu
\end{equation}
for all $\psi \in C^0_c(\R^n) \subset L^1(\R^n,\mu^\e)$ and all indices $1 \leq i,j \leq M$.  

More generally, for each positive function $w \in L^\infty(X)$ the measure $d\mu_w := w d\mu$ is mutually absolutely continuous with $\mu$, so as modules, 
$$
\U(X,\mu) \;\cong\; \U(X,\mu_w).
$$
So with the same pushforwards $\d_i^\e$ as before, Equations \eqref{eq_pushfwd} and \eqref{eq_pushfwdconv} give
\begin{eqnarray*}
\int_{B_\e} (\psi \circ [\zeta]_\e) ( \big( \d_i[\f^j \circ \zeta^{-1}]_{B_\e} \big) \circ [\zeta]_\e ) \, w \, d\mu &=&
\int_{\zeta(B_\e)} \psi \, \d_i^\e[\f^j \circ \zeta^{-1}]_{B_\e} \, d( ([\zeta]_\e)_\#\mu_w ) \\ &=&
\int_{B_\e} (\psi \circ [\zeta]_\e) \, \d_i\f^j_\e \, w \, d\mu.
\end{eqnarray*}
In particular, this holds for each $\a \in \N$. Taking $w$ to be sums of products of entries of ${\bf d}\f_\a$, we further obtain, for all $\psi \in L^1(\R^n,\mu^\e)$, the identity
\begin{equation} \label{eq_jacobian}
\int_{B_\e} (\psi \circ [\zeta]_\e) \det( {\bf d}[\f \circ \zeta^{-1}]_{B_\e} \circ [\zeta]_\e ) \, d\mu \;=\;
\int_{B_\e} (\psi \circ [\zeta]_\e) \, \det({\bf d}\f_\e) \, d\mu.
\end{equation}
Since $\R^n$ is $n$-generated and $M > n$, Lemma \ref{lemma_fingenbd} implies that ${\bf d}^\e := (\d_i^\e)_{i=1}^M$ is linearly dependent in $\U(\R^n,\mu^\e)$, so at $\mu^\e$-a.e.\ point of $\zeta(B_\e)$, the $n \times M$ matrix
$$
{\bf d^\e x} \;:=\; [ \d_i^\e x_j ]_{i=1,j=1}^{M,n}
$$
has (matrix) rank at most $n$.  The Chain Rule further implies that
$$
{\bf d}^\e{\bf h} \;=\; v_{\bf h} \cdot {\bf d}^\e{\bf x}
$$
holds for all Lipschitz maps ${\bf h} = (h_1, \cdots, h_M): \R^n \to \R^M$ with associated $M$-tuples of vectorfields $v_{\bf h} := (v_{h_1}, \cdots, v_{h_M})$ from Lemma \ref{lemma_chainrule}.  As a consequence, the $M \times M$ matrix ${\bf d}^\e{\bf h}$ also has rank at most $n$, so for $\mu^\e$-a.e.\ point in $\zeta(B_\e)$, %we have
\begin{equation} \label{eq_detzero}
\det( {\bf d}^\e{\bf h} ) \;=\; 0.
\end{equation}
With $\e = 2^{-\a}$ and the same scalars for convex combinations as before, put 
$$
{\bf h}_\a = \sum_{\b=\a}^\infty c_{\a\b} \, [ {\bf f} \circ \zeta^{-1}]_{B_\e}, \, \text{ hence } \,
{\bf h}_\a \circ [\zeta]_\e \;=\; \f_\a,
$$
and let $\lambda_i^\a$ denote the determinant of the cofactor of ${\bf d}^\e{\bf h}_\a$, accounting for sign, with the first row and $i$th column omitted.

So for all $\a \in \N$, Equations \eqref{eq_pushfwdconv}-\eqref{eq_detzero} imply that each $r \in (0,2^{-\a})$ and each non-negative $\psi \in C^0_c(\R^n)$, with $\psi|_{\zeta(B(x,r))} > 0$, satisfy the identities
\begin{eqnarray*}
0 \,=\,
\int_{\zeta(B(x,r))} \psi \det({\bf d}^\e{\bf h}_\a) \,d\mu^\e &=&
\sum_{i=1}^M \int_{\zeta(B(x,r))} \psi \, \lambda_i^\a \big[([\zeta]_\e)_\#\d_i\big]h_\a^1 \,d\mu^\e \\ &=&
\int_{B(x,r)} (\psi \circ [\zeta]_\e) \sum_{i=1}^M (\lambda_i^\a \circ [\zeta]_\e) \d_i\f_\a^1 \, \,d\mu \\ &=&
\int_{B(x,r)} (\psi \circ [\zeta]_\e) \det({\bf d}\f_\a) \,d\mu
% \\ &\to&
%\int_{B_\e} (\psi \circ \zeta) \det({\bf df}) \,d\mu \;>\; 0
\end{eqnarray*}
Letting $r \to 0$, this contradicts \eqref{eq_jacobianpos} and proves the lemma.
\end{proof}

\subsection{Derivations induce differentiability}

We now show that measurable differentiable structures exist on spaces that support a doubling measure and satisfy the Lip-derivation inequality.  The proof reduces to Lemma \ref{lemma_doublingbd2} in a similar way as how the proof of Theorem \ref{thm_fingen-measdiff} reduces to Theorem \ref{thm_dimmatch}.  We briefly sketch the idea.

\begin{proof}[Proof of $(2) \Rightarrow (1)$ for Theorem \ref{thm_doublingdiff}]
Assume all the notation from the proof of Lemma \ref{lemma_doublingbd2}.  Since $\mu$ is doubling on $X$, there exists $M = M(\mu) \in \N$ so that $\U(X,\mu)$ has rank at most $M$.
%$$
%{\rm rank}[\U(X,\mu)] \;\leq\; M.
%$$

Once again, let $\e > 0$ and fix an $\e$-net $[X]_\e = \{x_k\}_{k=1}^\infty$ of $X$.
Recall that every $u \in \Lip(X)$ can be weak-$*$ approximated by McShane extensions of functions
$$
g^k(x) \;:=\; d(x,x_k).
$$
By Lemma \ref{lemma_ON} and Remark \ref{rmk_gengenerators} there is a basis ${\bf d} := \{\d_i\}_{i=1}^M$ of $\U(X,\mu)$ and a measurable partition $\{X_l\}$ of $X$, so that on each $X_l$, the basis is orthogonal to some subset ${\bf f} = (f^j)_{j=1}^M$ of ${\bf g} := \{g^k\}_{k=1}^\infty$.  The remaining generators are denoted ${\bf h} := \{h^k\}_{k=M+1}^\infty$.

By Lemma \ref{lemma_ON}, the points in $X$ that are $\mu$-density points of $\d_ih^k$, for every $k \in \N$, 
%and where $\d_if^j = 0$ holds for $i \neq j$, 
forms a subset in $X$ whose complement has zero $\mu$-measure.
Let $x_0 \in X$ be such a point and without loss, assume $\d_if^i(x_0) > 0$.  Similarly as in Step 1 of Theorem \ref{thm_fingen-measdiff}, define $\h^k \in \Lip_b(X)$ as
$$
\h^k \;:=\; \d_1f^1(x_0) h^k \;-\; \sum_{j=1}^M \d_jh^k(x_0) \, f^j.
$$
Since ${\bf g}$ generates $\Lip(X)$, so does the collection of Lipschitz functions
$$
\g^k \;:=\;
\begin{cases}
f^k, & \text{if } k \leq M \\
\h^k, & \text{if } k > M.
\end{cases}
$$
Arguing once more by \eqref{eq_diffpoly}, all functions $u = p \circ {\bf f}$, for $p \in P_M$, are differentiable with respect to ${\bf f}$, with differential given by $Du = (\nabla p) \circ {\bf f}$.

By construction, however, it follows that ${\bf d}\h^k(x_0) = 0$, so $\Lip[\h^k](x_0) = 0$ further follows from the Lip-derivation inequality \eqref{eq_lipderiv}.  Thus every $h^k$ can be differentiated with respect to coordinates ${\bf f}$; the same holds for all compositions of $N$-tuples of ${\bf g}$ with polynomials in $P_N$, for all $N \in \N$.

Now let $u \in \Lip(X)$ be given; omitting a set of $\mu$-measure zero if necessary, assume $|{\bf d}u(x_0)| \neq \infty$.  As in the proof of Lemma \ref{lemma_doublingbd2} with $\e = 2^{-\a}$ for $\a \in \N$, the approximations $u_\a := u_{2^{-\a}}$ are uniformly Lipschitz and converge pointwise to $u$.  By a Mazur-type argument as before, there exist convex combinations 
$$
v_l \;:=\;
\sum_\a \lambda_{\a l} \, u_\a
$$
so that $\d_iv_l \rightharpoonup \d_iu$ in $L^q(X,\mu)$, for some $q \in (1,\infty)$.  
%By construction and Remark \ref{rmk_doublingpiecewise}, there is a neighborhood $O \subset X$ of $x_0$ on which $u_\a$ agrees with $\g^\b$, for some $\b = \b_\a \in \N$, and hence 
%$$
%v_l|_O \;=\;
%\sum_\a \lambda_{kl} \, ( u_\a|_O ) \;=\;
%\sum_\a \lambda_{kl} \, ( \g^{\b_\a}|_O ).
%$$
So by taking further subsequences and omitting another set of $\mu$-measure zero if necessary, assume further that $\d_iv_\b(x_0) \to \d_iu(x_0)$.

From the orthogonality of ${\bf d}$ and the fact that ${\bf d}\h^k(x_0) = 0$, we observe
$$
\d_iv_l(x_0) \;=\;
\sum_\a \lambda_{kl}(x_0) \, \d_iu_\a(x_0) \;=\;
\sum_\a \lambda_{kl}(x_0) \, \d_i\g^{\b_\a}(x_0) \;=\;
\lambda_{il}(x_0) \d_if^i(x_0).
$$
Since $\d_if^i(x_0) > 0$, the sequence $\{\lambda_{il}\}_{l=1}^\infty$ converges to some $\lambda_i \in \R$.

We now proceed as in Step 3 of Theorem \ref{thm_dimmatch}, so let $\e > 0$ be given.  As a notational convenience, 
put $\Lambda = (\lambda_\a)_{k=1}^M$ and for $k > M$ we write $\lambda_\a = 0$.  Using \eqref{eq_lipderiv} again and the above identities, it follows that, as $y \to x_0$,
\begin{equation*}
\begin{split}
&
\frac{\left|u(y) - u(x_0) - 
\Lambda \cdot [{\bf f}(y)- {\bf f}(x_0)] 
\right|}{d(x_0,y)} 
%\;=\;
%\frac{\bigg|u(y) - u(x_0) - \displaystyle{
%\sum_{\a=1}^\infty \lambda_\a \, [g^\a(y)-g^\a(x_0)] 
%} \bigg|}{d(x_0,y)} 
\\ &
\hspace{.15in} \;=\;
\frac{\bigg|u(y) - u(x_0) - \displaystyle{
\sum_{\a=1}^\infty \lambda_\a \, [g^\a(y)-g^\a(x_0)] 
} \bigg|}{d(x_0,y)} 
\\ &
\hspace{.15in} \;\leq\;
\frac{\big| [u - v_l](y) - [u - v_l](x_0)\big|}{d(x_0,y)} \;+\;
\frac{\bigg|v_l(y) - v_l(x_0) - \displaystyle{
\sum_{\a=1}^\infty \lambda_\a [g^\a(y)-g^\a(x_0)] 
} \bigg|}{d(x_0,y)} \\ &
\hspace{.15in} \;\leq\;
\e \,+\, \Lip[u - v_l](x_0) \,+\, \Lip\bigg[\sum_{\a=1}^\infty (\lambda_{\a l} - \lambda_\a) g^\a\bigg](x_0)
\end{split}
\end{equation*}
holds for sufficiently large $l$, and thus
\begin{equation*}
\begin{split}
&
\frac{\left|u(y) - u(x_0) - 
\Lambda \cdot [{\bf f}(y)- {\bf f}(x_0)] 
\right|}{d(x_0,y)} 
%\;=\;
%\frac{\bigg|u(y) - u(x_0) - \displaystyle{
%\sum_{\a=1}^\infty \lambda_\a \, [g^\a(y)-g^\a(x_0)] 
%} \bigg|}{d(x_0,y)} 
\\ &
\hspace{1.15in} \;\leq\;
\e \,+\, \Lip[u - v_l](x_0) \,+\, \Lip\bigg[\sum_{\a=1}^\infty (\lambda_{\a l} - \lambda_\a) g^\a\bigg](x_0) \\ &
\hspace{1.15in} \;\leq\;
\e \,+\, 
K\sum_{i=1}^M \left|\d_i[u - v_l](x_0)\right| \,+\, 
K|\lambda_{il} - \lambda_i| \, |\d_if^i(x_0)| \\ &
\hspace{1.15in} \;\leq\;
\e \;+\; K\e \,+\, K \, L(f^i) \e.
\end{split}
\end{equation*}
Since $\e$ was arbitrary, a measurable differentiable structure exists with coordinates ${\bf f}$, and where the differential of $\varphi$ is 
$$
\hspace{.8in}
Du(x_0) \;=\; \Lambda \;=\;
(\d_1f^1(x_0))^{-1}{\bf d}u(x_0).
%\left( \frac{\d_1u(x_0)}{\d_1f^1(x_0)}, \cdots, \frac{\d_Mu(x_0)}{\d_Mf^M(x_0)}\right).
\hspace{.8in}
\qedhere
$$
\end{proof}

\section{The Necessity of Lip-Derivation Inequalities} \label{sect_suff}

To prove the $(1) \Rightarrow (2)$ direction of Theorem \ref{thm_doublingdiff}, we first check the validity of \eqref{eq_lipderiv} with differentials $D_mf$ which, a priori, are not known to be derivations.  It will be shown afterwards that the components of $f \mapsto D_mf$ are in fact weakly continuous (and hence are well-defined derivations).

\begin{lemma} \label{lemma_lipdiff}
If $(X,d,\mu)$ supports a measurable differentiable structure, then there is an atlas of charts $\{ (X_m,\xi_m) \}_{m=1}^\infty$ on $X$ with the following property:\ for each $m \in \N$, there exists $C > 0$ so that  for $\mu$-a.e.\ $x \in X_m$,
$$
|D_mf(x)| \;\leq\; C \Lip[f](x).
$$
\end{lemma}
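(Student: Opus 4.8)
The plan is to retain each coordinate map $\xi_m$ and only refine the chart $X_m$ into countably many measurable pieces, arranging that on each piece the seminorm $a\mapsto\Lip[a\cdot\xi_m](x)$ on $\R^{n(m)}$ is bounded below by a fixed multiple of $|a|$. Charts with $n(m)=0$ are vacuous, so fix $m$ with $n(m)\geq 1$ and abbreviate $\xi=\xi_m$, $n=n(m)$. The map $f\mapsto\Lip[f](x)$ is sublinear, homogeneous, and even, so \eqref{eq_diffprop} --- which asserts $\Lip[f-D_mf(x)\cdot\xi](x)=0$ for $\mu$-a.e.\ $x\in X_m$ --- gives, for such $x$ and every $f\in\Lip(X)$, the identity
$$
\Lip[f](x) \;=\; \Lip\big[D_mf(x)\cdot\xi\big](x)\;=:\;L_x\big(D_mf(x)\big),
$$
where $L_x(a):=\Lip[a\cdot\xi](x)$ is a seminorm on $\R^n$, measurable in $x$ and Lipschitz in $a$. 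Hence it suffices to find, after refining $X_m$, a constant $c>0$ with $L_x(a)\geq c|a|$ for all $a\in\R^n$ and $\mu$-a.e.\ $x$ in each new piece.

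The crucial point is that $L_x$ is positive-definite for $\mu$-a.e.\ $x\in X_m$, and this is exactly where the uniqueness clause in Definition \ref{defn_measdiff} is used. Suppose the set $N:=\{x\in X_m:\ker L_x\neq\{0\}\}$ had positive measure. Since $(x,a)\mapsto L_x(a)$ is measurable in $x$ and continuous in $a$, the correspondence $x\mapsto S^{n-1}\cap\ker L_x$ is measurable with nonempty closed values on $N$, so the Kuratowski--Ryll-Nardzewski selection theorem provides a measurable $\tilde D:N\to S^{n-1}$ with $\tilde D(x)\in\ker L_x$; set $\tilde D\equiv 0$ on $X_m\setminus N$, so that $\tilde D\in L^\infty(X_m;\R^n)$. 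Then $\Lip\big[0-\tilde D(x)\cdot\xi\big](x)=L_x(\tilde D(x))=0$ for all $x\in X_m$, so $\tilde D$ satisfies \eqref{eq_diffprop} as a differential of the zero function; but $0\in L^\infty$ does as well, and $\tilde D\neq 0$ in $L^\infty$ because $\mu(N)>0$ --- contradicting uniqueness. Thus $\mu(N)=0$.

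For the refinement, fix a countable dense set $\{a_k\}\subset S^{n-1}$ and set
$$
X_m^{(j)} \;:=\; \bigcap_{k}\big\{x\in X_m:\Lip[a_k\cdot\xi](x)\geq 1/j\big\},
$$
which is $\mu$-measurable. By continuity of $a\mapsto L_x(a)$ and compactness of the sphere, $x\in X_m^{(j)}$ precisely when $L_x(a)\geq|a|/j$ for all $a\in\R^n$, and $\bigcup_j X_m^{(j)}=X_m\setminus N$. Discarding the null pieces, the collection $\{(X_m^{(j)},\xi_m)\}_{m,j}$ is the sought atlas: each piece has positive measure, they cover $X$ up to a set of measure zero, each inherits the pointwise a.e.\ condition \eqref{eq_diffprop} with differential $D_mf|_{X_m^{(j)}}$, and differentials are unique there since $L_x(D_1(x)-D_2(x))=0$ forces $D_1=D_2$ $\mu$-a.e.\ when $L_x$ is positive-definite. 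Finally, on $X_m^{(j)}$ the identity from the first paragraph gives $\Lip[f](x)=L_x(D_mf(x))\geq|D_mf(x)|/j$ for $\mu$-a.e.\ $x$ and all $f\in\Lip(X)$, i.e.\ the lemma with $C=j$. I expect the measurable-selection step to be the only real obstacle to presenting cleanly, though it merely formalises the observation that a nontrivial kernel of $L_x$ on a positive-measure set would manufacture a nonzero differential of the zero function.
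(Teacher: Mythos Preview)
Your proof is correct and follows essentially the same approach as the paper: both define the seminorm $L_x(a)=\Lip[a\cdot\xi](x)$, show it is positive-definite $\mu$-a.e.\ by contradicting the uniqueness clause in Definition~\ref{defn_measdiff}, and then refine the chart according to level sets of $\min_{S^{n-1}}L_x$. The only notable difference is that you invoke Kuratowski--Ryll-Nardzewski to produce a measurable nonzero section of $\ker L_x$, whereas the paper handles this step more informally (writing the competing differential as $x\mapsto(0,-c_2/c_1,\ldots,-c_n/c_1)$ without explicitly addressing the measurability of $c=c(x)$); your version is therefore slightly more rigorous at exactly the point you anticipated, but the argument is otherwise the same.
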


As shown in \S\ref{sect_intro}, the opposite inequality already holds. % with $C = \sqrt{n(m)}L(\xi_m)$
The rest of the argument follows the proof of \cite[Thm 4.38(ii)]{Cheeger}, which we include for completeness.  

\begin{proof}
Let $\{(X_m,\xi_m)\}_{m=1}^\infty$ be an atlas of charts associated to %a measurable differentiable structure of 
$X$ and let $n = n(m)$ be the (chart) dimension of $X_m$.  Define $Y_m \subset X_m$ as the collection of points $x$ where there exists a nonzero $c = (c_1, \cdots, c_n) \in \R^n$ so that 
$$
\Lip[ c \cdot \xi_m ](x) \;=\; 0
$$
where $c \cdot \xi_m \;:=\; \sum_{i=1}^n c_i\xi_m^i$, for short.

If $\mu(Y_m) > 0$, then one component, say $\xi_m^1$, is a linear combination of the remaining ones $\{\xi_m^i\}_{i=2}^n$, with $c_1 \neq 0$.  So for $f = \xi_m^1$, both of the vectorfields
$$
x \,\mapsto\, (1, 0, \cdots, 0) \; \text{ and } \; 
x \,\mapsto\, \Big(0, -\frac{c_2}{c_1}, \cdots, -\frac{c_m}{c_1}\Big),
%x \,\mapsto\, -\frac{1}{c_1}\big(0, c_2, \cdots, c_m\big),
$$
satisfy the role of $D_mf$ in \eqref{eq_diffprop}, which contradicts the uniqueness of the differential on $X_m$.  It follows that $\mu(Y_m) = 0$.

For fixed $x \in X_m \setminus Y_m$, observe that $f \to \Lip[f](x)$ is a semi-norm, so 
$$
l_x(c) \;:=\; \Lip[c \cdot \xi](x)
$$
is a positive, continuous function on $\R^n \setminus \{0\}$.  It follows that, for $\mu$-a.e.\ $x \in X_m$,
$$
K(x) \;:=\; \min_{\mathbb{S}^{n-1}} l_x \;>\; 0.
$$

Now let $f \in \Lip(X)$ be arbitrary.  If $|D_mf(x)| = 0$, then by differentiability \eqref{eq_diffprop} and the above seminorm property, we would have
$$
\Lip[f](x) \;\leq\;
\Lip[f - D_mf(x) \cdot \xi_m](x) \,+\, \Lip[ D_mf(x) \cdot \xi_m](x) \;=\; 0 + 0
$$ 
which proves the lemma.  On the other hand, for $c = |D_mf(x)|^{-1}D_mf(x)$ note that
$$
\Lip[f](x) \;=\; 
\Lip[ Df(x) \cdot \xi ](x) \;\geq\;
K(x) |Df(x)|.
$$
The lemma follows, by partitioning charts into sub-charts of the form
$$
X_{m,k} \;:=\; 
\left\{x \in X_m \,:\, 2^{-(k+1)} \leq  K(x) < 2^{-k} \right\}\
$$
and choosing $C = 2^{-(K+1)}$.
\end{proof}

We now show that the components of the differential %map $f \mapsto D_mf$ 
are weakly continuous in the sense of Definition \ref{defn_deriv}.  This step requires Sobolev space techniques.

In general, a doubling metric space need not possess rich families of rectifiable curves.  So instead of the Newtonian-Sobolev spaces \cite{Shan}, we will use Sobolev spaces defined in terms of measurable differentiable structures as well as the Haj{\l}asz-Sobolev spaces of functions \cite{Hajlasz}. For a further discussion of the latter function space, see also \cite{Heinonen:Koskela}, \cite{Shan}, \cite{Hajlasz:Koskela2}, \cite{Hajlasz2}, \cite{HeinonenLA}, and \cite{HeinonenNC}.

To fix notation, for a measurable differentiable structure on $X$, let $N \in \N$ be the dimension bound as in Definition \ref{defn_measdiff}.  Moreover, for a fixed atlas $\{(X_m,\xi_m)\}_{m=1}^\infty$, define a {\em global} differential map $Df : X \to \R^N$ by
$$
Df \;:=\; \sum_{m=1}^\infty \chi_{X_m} \, D_mf.
$$

\begin{defn} \label{defn_metricsobolev}
Let $(X,d,\mu)$ be a metric measure space and let $p \in (1,\infty)$.
\begin{enumerate}
\item A function $u \in L^p(X)$ lies in the {\em Haj{\l}asz-Sobolev space} $M^{1,p}(X)$ if there exists a non-negative $g \in L^p(X)$ so that
\begin{equation} \label{eq_ptwise}
|u(x) - u(y)| \;\leq\; \big( g(x) + g(y) \big) \, d(x,y) 
\, \text{ for} \, \mu\textrm{-a.e.\ } x,y \in X;
\end{equation}
\item if $X$ supports a non-degenerate measurable differentiable structure with atlas $\{(X_m,\xi_m)\}_{m=1}^\infty$, then for the linear subspace
$$
\tilde{H}^{1,p}(X) \;:=\; \{ f \in \Lip_{\rm loc}(X) \cap L^p(X) \,:\, |Df| \in L^p(X) \}
$$
of $\Lip(X)$, we define a norm by
$$
\|f\|_{1,p} \;:=\; \|f\|_{L^p(X)} \,+\, \| |Df| \|_{L^p(X)}
$$
and the space $H^{1,p}(X)$ is the completion of $\tilde{H}^{1,p}(X)$ with respect to $\|\cdot\|_{1,p}$.
\end{enumerate}
\end{defn}

Following \cite{Hajlasz}, $M^{1,p}(X)$ is a Banach space with respect to the norm
$$
\|u\|_{M^{1,p}(X)} \;:=\; 
\|u\|_{L^p(X)} \,+\, 
\inf\Big\{ \|g\|_{L^p(X)} \,;\, g : X \to [0,\infty] \text{ satisfies } \eqref{eq_ptwise} \Big\}
$$
and for $p > 1$, the infimum is always attained by some $g_u \in L^p(X)$.

\begin{rmk}
%Comparing the two function spaces, t
The set inclusion 
$M^{1,p}(X) \subseteq H^{1,p}(X)$
%$$
%M^{1,p}(X) \;\subset\; H^{1,p}(X)
%$$
always holds on metric spaces with doubling measures \cite[Thm 9]{Franchi:Hajlasz:Koskela} and 
\begin{equation} \label{eq_normbd}
\|u\|_{1,p} \;\leq\; C \|u\|_{M^{1,p}(X)}
\end{equation}
follows, with $C > 0$ independent of $u$, from estimates involving $u \in M^{1,p}(X)$ and its Lipschitz approximations $\{u_\e\}_{\e > 0}$, as constructed from fixed partitions of unity on the space \cite[Lem 12]{Franchi:Hajlasz:Koskela}.
\end{rmk}

We now study the Banach space structures of $M^{1,p}(X)$ and $H^{1,p}(X)$.  The following result is essentially \cite[Thm 4.38(ii)]{Cheeger}, but we include the details.

\begin{lemma} \label{lemma_reflexive}
If $(X,d,\mu)$ supports a non-degenerate measurable differentiable structure, then $H^{1,p}(X)$ and $M^{1,p}(X)$ are reflexive Banach spaces, for all $1 < p < \infty$.
\end{lemma}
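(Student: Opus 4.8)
The plan is to exploit the fact that a measurable differentiable structure gives, on each chart $X_m$, a finite-dimensional differential $D_m f \in L^\infty(X_m;\R^{n(m)})$ with a uniform bound $n(m) \le N$, and to use this to linearly embed the Sobolev spaces into a reflexive vector-valued $L^p$-space. First I would observe that the map $f \mapsto (f, Df)$ identifies $\tilde H^{1,p}(X)$ isometrically with a subspace of the direct sum $L^p(X) \oplus L^p(X;\R^N)$ (with the norm $\|\cdot\|_{1,p}$ being exactly the norm inherited from this sum), and hence $H^{1,p}(X)$, as the completion, is isometric to the \emph{closure} of the graph of $D$ in $L^p(X)\oplus L^p(X;\R^N)$. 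Since $p \in (1,\infty)$, both $L^p(X)$ and $L^p(X;\R^N)$ are reflexive, so their direct sum is reflexive; a closed subspace of a reflexive Banach space is reflexive; therefore $H^{1,p}(X)$ is reflexive. This is the heart of the argument and is essentially Cheeger's observation.

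For $M^{1,p}(X)$ I would run a similar but slightly more delicate argument. The idea is to show $M^{1,p}(X)$ is \emph{linearly isomorphic} to a closed subspace of a reflexive space, using the inclusion $M^{1,p}(X) \subseteq H^{1,p}(X)$ from \eqref{eq_normbd}, namely $\|u\|_{1,p} \le C\|u\|_{M^{1,p}(X)}$. One first checks the reverse-type control needed to make the inclusion an isomorphism onto its image, or — more robustly — one embeds $M^{1,p}(X)$ into $L^p(X) \oplus L^p(X)$ via $u \mapsto (u, g_u)$, where $g_u$ is the (unique, for $p>1$) minimal Haj\l asz gradient, and shows this image is closed. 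Closedness is the technical point: if $u_k \to u$ in $L^p$ and $g_{u_k} \to g$ in $L^p$, one passes to a subsequence converging a.e., takes the a.e.\ limit in the defining inequality \eqref{eq_ptwise}, and concludes $g$ is an admissible Haj\l asz gradient for $u$, whence $u \in M^{1,p}(X)$ and by minimality $g_u \le g$; combined with lower semicontinuity of the $L^p$-norm under weak limits this forces $\|u\|_{M^{1,p}} \le \liminf \|u_k\|_{M^{1,p}}$, giving closedness of the image. A closed subspace of the reflexive space $L^p(X) \oplus L^p(X)$ is reflexive, hence so is $M^{1,p}(X)$.

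The main obstacle I anticipate is not the abstract functional analysis — reflexivity passes to closed subspaces for free — but rather verifying that the relevant maps genuinely land in a reflexive \emph{space of fixed finite fiber dimension}; this is precisely where non-degeneracy and the uniform bound $n(m) \le N$ from Definition \ref{defn_measdiff} are used, so that $Df$ takes values in a \emph{fixed} $\R^N$ rather than in fibers of unbounded dimension. A secondary subtlety is confirming that the completion $H^{1,p}(X)$ does not introduce "ghost" elements — i.e., that the closure of the graph of $D$ really is a graph (equivalently, that $D$ is a closable operator), which follows because convergence in $\|\cdot\|_{1,p}$ forces $L^p$-convergence of both $f$ and $Df$, so limits are consistent. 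Once these points are in place, the reflexivity of $L^p$ for $1<p<\infty$ finishes both claims.
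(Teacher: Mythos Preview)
Your argument for $H^{1,p}(X)$ is correct and is in fact a cleaner route than the paper's. The paper does \emph{not} embed into $L^p(X)\oplus L^p(X;\R^N)$; instead it defines, for $\mu$-a.e.\ $x$ in a chart $X_m$, the norm $|v|_x := \Lip[v\cdot\xi_m](x)$ on $\R^{n(m)}$, invokes F.~John's ellipsoid theorem to compare $|\cdot|_x$ uniformly to the Euclidean norm, and concludes that $\tilde H^{1,p}(X)$ (hence its completion) is \emph{uniformly convex}. Your graph-embedding argument avoids the John-ellipsoid detour entirely and gets reflexivity directly; the paper's route yields the stronger conclusion of uniform convexity, which is not needed downstream. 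Incidentally, the closability issue you flag is a red herring for this lemma: even if the closure of the graph failed to be a graph, it would still be a closed linear subspace of a reflexive space, which is all that is required.

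For $M^{1,p}(X)$, however, your ``more robust'' option has a genuine gap: the map $u\mapsto (u,g_u)$ is \emph{not linear}, since the minimal Haj{\l}asz gradient satisfies $g_u\ge 0$ and $g_{-u}=g_u$. Its image is therefore not a linear subspace of $L^p(X)\oplus L^p(X)$ at all, and the closed-subspace-of-reflexive mechanism does not apply. Your closedness computation also only yields $g_u\le g$, not $g_u=g$, so even set-theoretic closedness of the image fails. The route that actually works is your first option, which you leave undeveloped: this is precisely what the paper does. Having already shown $H^{1,p}(X)$ reflexive, the paper uses the continuous inclusion $M^{1,p}(X)\hookrightarrow H^{1,p}(X)$ from \eqref{eq_normbd} together with the Closed Graph (equivalently, Open Mapping) Theorem to identify $M^{1,p}(X)$ with a closed subspace of $H^{1,p}(X)$, and reflexivity then passes down. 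So drop the $(u,g_u)$ embedding and develop the inclusion argument instead.
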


\begin{proof}
Choose an atlas $\{(X_m,\xi_m)\}_{m=1}^\infty$ on $X$ as in Lemma \ref{lemma_lipdiff}, and put $n = n(m)$. For $\mu$-a.e.\ $x \in X_m$, define a norm on $\R^n$ by 
$$
| v |_x \;:=\; \Lip[v \cdot \xi_m](x).
$$
Verily, \eqref{eq_lipderiv} implies that at such points $x$, we have $|v|_x = 0$ if and only if $v = 0$.

A theorem of F.\ John \cite{John}, however, asserts that every norm on $\R^n$, including $|\cdot|_x$, is comparable to the usual inner product norm $| \cdot |$ on $\R^n$ and where the multiplicative constants depend only on $n$.  This implies that $|\cdot|_x$ is a {\em uniformly convex} norm on $\R^n$ for $\mu$-a.e.\ $x \in X_m$, as well as
$$
|Df(x)| \;\approx\; \Lip[f](x) \;=\; \Lip[Df(x) \cdot \xi](x) \;=\; |Df(x)|_x.
$$
So for $p \in (1,\infty)$, the space $\tilde{H}^{1,p}(X)$ is uniformly convex \cite[Rmk 10.1.10]{handbook}, from which the uniform convexity and reflexivity of $H^{1,p}(X)$ follows \cite[\S26.6]{Kothe}.

As for the Haj{\l}asz-Sobolev space, equation \eqref{eq_normbd} implies that the inclusion map $M^{1,p}(X) \hookrightarrow H^{1,p}(X)$ is continuous.  By the Closed Graph Theorem, $M^{1,p}(X)$ is a closed subspace of $H^{1,p}(X)$, so $M^{1,p}(X)$ is also reflexive.
\end{proof}

The next result relates weak convergence in $\Lip_b(X)$, $M^{1,p}(X)$, and $H^{1,p}(X)$.

\begin{lemma}
Let $(X,d,\mu)$ support a non-degenerate measurable differentiable structure.  If $f_k \wsto f$ in $\Lip(X)$, then for every $p \in (1,\infty)$ and every ball $B$ in $X$, the sequence $\{f_k\}_k$ converges weakly to $f$ in both $M^{1,p}(B)$ and $H^{1,p}(B)$.
\end{lemma}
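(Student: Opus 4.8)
\ Write $B = B(x_0,r)$, and recall from Remark~\ref{rmk_net} that $f_k \wsto f$ in $\Lip(X)$ amounts to $f_k \to f$ pointwise with $K := \sup_k L(f_k) < \infty$ (whence also $L(f) \le K$). The plan is to prove the assertion first in $M^{1,p}(B)$ --- which, unlike the abstract completion $H^{1,p}(B)$, is a genuine subspace of $L^p(B)$ --- and then to transfer it to $H^{1,p}(B)$ via the bounded inclusion of \eqref{eq_normbd}. Two elementary observations on $B$ come first. Since $\mu$ is doubling, $0 < \mu(B) < \infty$; since $f_k(x_0) \to f(x_0)$ and each $f_k$ is $K$-Lipschitz, $\sup_k \|f_k\|_{L^\infty(B)} \le \sup_k |f_k(x_0)| + Kr < \infty$, so dominated convergence (here $p < \infty$) yields $f_k \to f$ strongly in $L^p(B)$. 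And the constant $g \equiv K$ is a Haj{\l}asz gradient on $B$ for each $f_k$ and for $f$, so $\{f_k\} \cup \{f\} \subseteq M^{1,p}(B) \subseteq H^{1,p}(B)$ with $\sup_k \|f_k\|_{M^{1,p}(B)} \le \big( \sup_k \|f_k\|_{L^\infty(B)} + K \big)\mu(B)^{1/p} < \infty$, hence --- by \eqref{eq_normbd} --- also bounded in $H^{1,p}(B)$.

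Next I would run the usual subsequence argument in $M^{1,p}(B)$. Restricting the atlas $\{(X_m,\xi_m)\}$ to $B$ equips $B$ with a measurable differentiable structure, so Lemma~\ref{lemma_reflexive} shows that $M^{1,p}(B)$ is reflexive; consequently the bounded sequence $\{f_k\}$ admits weakly convergent subsequences, and it suffices to check that every subsequential weak limit equals $f$ (for then, by the standard contradiction argument, the full sequence converges weakly to $f$). So suppose $f_{k_j} \rightharpoonup h$ weakly in $M^{1,p}(B)$. Composing with the bounded linear inclusion $M^{1,p}(B) \hookrightarrow L^p(B)$, which is continuous for the weak topologies, gives $f_{k_j} \rightharpoonup h$ weakly in $L^p(B)$; but $f_{k_j} \to f$ strongly, hence weakly, in $L^p(B)$, so $h = f$ $\mu$-a.e. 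Since $M^{1,p}(B)$ consists of $\mu$-equivalence classes of $L^p$ functions, $h = f$ as elements of $M^{1,p}(B)$. Therefore $f_k \rightharpoonup f$ weakly in $M^{1,p}(B)$.

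Finally, the inclusion $M^{1,p}(B) \hookrightarrow H^{1,p}(B)$ of \eqref{eq_normbd} is bounded and linear, hence weakly continuous, so $f_k \rightharpoonup f$ weakly in $H^{1,p}(B)$ follows immediately. The one genuinely delicate point --- and the reason the argument is organized to pass through $M^{1,p}(B)$ --- is the identification of the weak limit: an element of the completion $H^{1,p}(B)$ records both an $L^p$-component and a $Df$-component, so matching $L^p$-components alone would not pin it down; one would also have to match differentials, which would need the weak continuity of $f \mapsto D_m f$, precisely the conclusion this lemma is meant to support later on. Working in $M^{1,p}(B)$, whose norm is blind to the differentiable structure, sidesteps this circularity.
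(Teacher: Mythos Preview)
Your argument is correct and follows essentially the same architecture as the paper's proof: boundedness in $M^{1,p}(B)$, reflexivity from Lemma~\ref{lemma_reflexive}, the subsequence principle to upgrade subsequential weak limits to full weak convergence, and then passage to $H^{1,p}(B)$ via the bounded inclusion. The only difference is in the limit-identification step: the paper invokes Mazur's lemma to produce norm-convergent convex combinations and compares their pointwise limit with the locally uniform limit $f$, whereas you use the weak continuity of the embedding $M^{1,p}(B)\hookrightarrow L^p(B)$ together with strong $L^p$-convergence from dominated convergence---your route is slightly more direct and avoids Mazur entirely.
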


\begin{proof}
By duality, the reverse inclusion $[H^{1,p}(B)]^* \subset [M^{1,p}(B)]^*$ holds, so it suffices to show weak convergence in $M^{1,p}(B)$ only.

Indeed, $\{f_k\}_k$ is uniformly Lipschitz and thus bounded in $M^{1,p}(B)$ for all $p \in (1,\infty)$.  Combining Banach-Alaoglu with Lemma \ref{lemma_reflexive}, there is a subsequence $\{f_{k_j}\}_j$ that converges weakly to some $ h \in M^{1,p}(B)$.  By Mazur's lemma, a sequence of convex combinations $\{\f_j\}_j$ of $\{f_{k_j}\}_j$ converge in norm to $h$, so a further subsequence $h_i := \f_{j_i}$ converge pointwise to $f$.

On the other hand, since $\{f_k\}_{k=1}^\infty$ is uniformly Lipschitz, the convergence $f_k \to f$ is locally uniform.  The operations of taking subsequences and convex combinations therefore preserve this locally uniform convergence, so $h = f$.  In particular, this shows that every subsequence of $\{f_k\}_k$ has a further subsequence which converges weakly to $f$, so equivalently $f_k \rightharpoonup f$ in $M^{1,p}(X)$.
\end{proof}

We now prove the remaining direction of Theorem \ref{thm_doublingdiff}.  The argument is very similar to the proof in \cite{Gong} regarding the Cheeger-Weaver theorem \cite{WeaverED}.

\begin{proof}[Proof of $(1) \Rightarrow (2)$ for Theorem \ref{thm_doublingdiff}]
%We reiterate that if the measurable differentiable structure is degenerate, then by the Lip-derivation inequality \eqref{eq_lipderiv}, there are no nonzero derivations in $\U(X,\mu)$.
It remains to show, on each chart $X_m$ of $X$, that each component of the differential $f \mapsto D_mf$ is a derivation.
To simplify notation, we write $X = X_m$ and $n = n(m)$ and
$$
Df \;:=\; 
D_mf \;:=\; 
( \partial_1f, \cdots, \partial_nf ).
$$
Moreover assume that $X=X_m$ is a bounded metric space.

Fix $p > 1$ and put $q = \frac{p}{p-1}> 1$. For each $\varphi \in L^q(X)$, it follows from Lemma \ref{lemma_lipdiff} that for each $i = 1, \ldots n$, the map
\begin{equation} \label{eq_fnl}
T_i(f) \;:=\;
\int_X  \varphi \, \partial_if \,d\mu
\end{equation}
is a bounded linear functional on the vector subspace $\tilde{H}^{1,p}(X)$.  Applying Hahn-Banach, it extends to an element in $[H^{1,p}(X)]^*$, which we also denote by $T_i$.

To complete the proof, assume $f_k \to f$ pointwise and that 
$l := \sup_k L(f_k) < \infty.$
Without loss, $l > 0$; otherwise each $f_k$ is constant, so $f$ is constant and trivially 
$$
\partial_if_k \;=\; 0 \;=\; \partial_if
$$
holds for each $i = 1, \ldots n$, which would give the theorem.

Let $\psi \in L^1(X)$ and $\e > 0$ be arbitrary.  Since $X$ is bounded and $\mu$ is doubling (hence Radon), $L^q(X)$ is dense in $L^1(X)$, so there exists $\varphi \in L^q(X)$ satisfying 
$$
\|\psi - \varphi\|_{L^1(X)} \;<\; \frac{\e}{4l}.
$$
From $T_i \in [H^{1,p}(X)]^*$ and Lemma \ref{lemma_reflexive} it follows that, for sufficiently large $k \in \N$,
$$
|T_i(f_k-f)| \;=\;
\left|
\int_X \varphi \, \partial_i[f_k-f] \,d\mu 
\right| \;<\; \frac{\e}{2}.
$$
Applying the previous estimates, we further obtain
\begin{eqnarray*}
\left|
\int_X \psi \, \partial_i[f_k-f] \,d\mu 
\right| &\leq&
\left|
\int_X \varphi \, \partial_i[f_k-f] \,d\mu 
\right|
+ \| D(f_k-f) \|_{L^\infty(X)} \|\psi-\varphi\|_{L^1(X)} \\ &<&
\frac{\e}{2} + 2 L(f_k-f) \cdot \frac{\e}{4l} \;=\; \e.
\end{eqnarray*}
Since $\e$ and $\psi$ were arbitrary, it follows that $\partial_if_k \wsto \partial_if$ in $L^\infty(X)$, as desired.
\end{proof}

\bibliographystyle{alpha}
\bibliography{diffstruct}
\end{document}